\theoremstyle{plain}
\newtheorem{thm}{Theorem}[section]
\newtheorem{cor}[thm]{Corollary}
\newtheorem{lem}[thm]{Lemma}
\newtheorem{defn}[thm]{Definition}
\newtheorem{rem}[thm]{Remark}
\newtheorem{prop}[thm]{Proposition}
\newcommand{\beq}{\begin{equation}}
\newcommand{\eeq}{\end{equation}}
\newcommand{\beqa}{\begin{eqnarray}}
\newcommand{\eeqa}{\end{eqnarray}}
\newcommand{\bit}{\begin{itemize}}
\newcommand{\bta}{{\bar \tau}}
\newcommand{\eit}{\end{itemize}}
\newcommand{\bedef}{\begin{defn}}
\newcommand{\edefn}{\end{defn}}
\newcommand{\bpro}{\begin{prop}}
\newcommand{\epro}{\end{prop}}
\newcommand{\bF}{{\bf F}}
\newcommand{\tA}{\tilde{A}}
\newcommand{\bu}{{\bf u}}
\newcommand{\M}{\mathcal{M}}
\newcommand{\h}{\hat}
\newcommand{\bU}{{\bf U}}
\newcommand{\bV}{{\bf V}}
\newcommand{\bG}{{\bf G}}
\newcommand{\bff}{{\bf f}}
\newcommand{\bg}{{\bf g}}
\newcommand{\avg}[1]{\left\langle#1\right\rangle}
\newcommand{\be}{{\bf e}}
\newcommand{\R}{\mathbb{R}}
\newcommand{\bT}{{\bf T}}
\newcommand{\tb}{\tilde{b}}
\newcommand{\ta}{\tilde{a}}
\newcommand\QQ[1]{{#1}}
\newcommand{\ds}{\displaystyle}
\newcommand{\RR}{\mathbb{R}}
\begin{document}

\title{
Asymptotic Analysis of High Order IMEX-RK Methods for ES-BGK Model at Navier-Stokes level}

\author[1]{Sebastiano Boscarino \thanks{Email: \texttt{boscarino@dmi.unict.it}}}
\author[2]{Seung Yeon Cho \thanks{Email: \texttt{chosy89@gnu.ac.kr}}}

\affil[1]{Department of Mathematics and Computer Science, University of Catania, Catania, Italy}
\affil[2]{Department of Mathematics, Gyeongsang National University, Jinju, Korea}

\date{}
\maketitle
\thispagestyle{empty}
\begin{abstract}
Implicit-explicit Runge-Kutta (IMEX-RK) time discretization methods are very popular when solving stiff kinetic equations. In \cite{Hu}, an asymptotic analysis shows that a specific class of high-order IMEX-RK schemes can accurately capture the Navier-Stokes limit without needing to resolve the small scales dictated by the Knudsen number. In this work, we extend the asymptotic analysis to general IMEX-RK schemes, known in literature as Type I and Type II. We further introduce some IMEX-RK methods developed in \cite{Boscarino2017} to attain uniform accuracy in the wide range of Knudsen numbers. Several numerical examples are presented to verify the validity of the obtained theoretical results and the effectiveness of the methods.
\end{abstract}

{\bf keywords.} Stiff kinetic equations, BGK/ES-BGK models, IMEX Runge–Kutta methods, Compressible Euler equations, Compressible Navier–Stokes equations

\section{Introduction}
\noindent One of the most well-known kinetic models for rarefied gas dynamics is the Boltzmann transport equation (BTE). Its dimensionless form is written as
\begin{equation}\label{Beq}
	\partial_t f + v\cdot \nabla_x f = 	\frac{1}{\varepsilon}Q(f,f),
\end{equation}
where $f(t,x,v)$ is the distribution function which depends on time $t > 0$, on the position of particles {$x=(x_1,\cdots,x_{d_x}) \in \RR^{d_x}$} and on their velocity {$v=(v_1,\cdots,v_{d_v}) \in \RR^{d_v}$}. Here $\varepsilon$ is the so-called Knudsen number, defined as the ratio of the mean free path of molecules and the characteristic length scale of the physical problem. The dimension of space and velocity domains are denoted by $d_x$ and $d_v$, respectively.

The Boltzmann collision operator $Q(f,f)$ is a non-linear operator that describes the binary collisions between molecules. It acts only on the velocity dependence of the distribution function $f$, and has the following fundamental properties 
\begin{itemize}
	\item of conserving mass momentum and energy:
	\begin{equation}\label{rel1}
		\left \langle Q(f,f)\phi(v) \right \rangle = \textbf{0} \in \RR^{d_v+2},
	\end{equation}
	for $\phi(v) = \left(1, v, \frac{1}{2}|v|^2\right)^\top$, and $\left \langle g \right \rangle := \int_{\RR^{d_v}}g(v) dv$,
	\item to satisfy $H$-theorem:
	\begin{equation}\label{rel2}
		\int_{\RR^{d_v}}Q(f,f) \ln f dv \le 0,
	\end{equation}
	\item to vanish, i.e., $Q(f,f)=0$ when $f$ is the local Maxwellian:
	\begin{equation}\label{Max}
		\mathcal{M}[f](t,x,v) = \frac{\rho(t,x)}{(2 \pi T(t,x))^{d_v/2}}\exp\left(-\frac{|v-u(t,x)|^2}{2T(t,x)}\right),
	\end{equation}
	where $\rho$, {$u=(u_1,\cdots, u_{d_v})$}, and $E$ are density, mean velocity and energy associated to $f$:
	\begin{equation}
		\left \langle f \phi(v) \right \rangle  = \left( \rho, \rho u, E\right)^\top = U\in \RR^{d_v+2},
	\end{equation}
	and temperature $T$ is given by $\displaystyle \frac{d_v \rho T}{2}=E- \frac{1}{2}\rho |u|^2$.
	We introduce the vector $U$ for later use.
\end{itemize}

When the Knudsen number is small, it is well known that BTE is closely related to fluid models such as compressible Euler or compressible Navier-Stokes (CNS) equations. The form of these fluid models associated to BTE are traditionally derived using the perturbation techniques like the Hilbert or Chapman-Enskog expansions \cite{Golse, Cerc, CH}. 
Thus, BTE can be used for various Knudsen number, i.e., from rarefied to continuum gas dynamics.

In spite of its good predictability and close relationship with fluid models, computation of the Boltzmann collision operator is very expensive due to its high dimensionality. Furthermore, the problem becomes more severe as the Knudsen number gets closer to zero (fluid regime). In this case, solving BTE by a standard explicit numerical scheme requires the use of a time step of the order of $\varepsilon$, which leads to very expensive numerical computations. Even if one adopts an implicit or semi-implicit time discretization for the collision part, it is still numerically challenging to construct an efficient implicit solver due to the complicate structure of the Boltzmann collision operator. 

To circumvent the issue on computational cost of the Boltzmann collision operator $Q(f,f)$ in (\ref{Beq}), simpler kinetic models have been proposed to mimic the main properties of the full integral operator $Q(f,f)$. One such model is the BGK model \cite{BGK}:
\begin{equation}\label{BGKo}
	\frac{\partial f}{\partial t} + v \cdot \nabla_x f = {\frac{\tau}{\varepsilon}}\left(\mathcal{M}[f] -f \right),
\end{equation}
where $\tau$ is the collision frequency that depends on $\rho$ and $T$. The BGK model replaces the Boltzmann collision operator with a simple relaxation toward the local Maxwellian. Note that the BGK model still maintains the conservation of mass, momentum, and energy, as well as the entropy inequality \cite{Cerc}. In addition, the BGK model describes the correct fluid limit as $\varepsilon \to 0$, i.e., at the \textit{leading order term} $\varepsilon=0$, it yields the compressible Euler equations (see \cite{Cerc, Golse}). Unfortunately, at the first-order correction in $\varepsilon$, the transport coefficients obtained at the Navier--Stokes level are not satisfactory. In particular, the Prandtl number defined by
$\displaystyle \textrm{Pr} = \frac{\gamma}{\gamma -1}\frac{\mu}{\kappa},$
which relates the viscosity $\mu$ to the heat conductivity $\kappa$ of gases is fixed by $1$. Here, the polytropic constant $\gamma$ for monatomic molecule with transional motions is given by $\displaystyle\gamma=\frac{d_v+2}{d_v}$. Note that for most realistic gases, we have {\textrm{Pr}}$< 1$. In addition, in the hard-sphere model for monoatomic gases ($\gamma = 5/3$) in Boltzmann equation, its Prandtl number is very close to $2/3$. 

Many variants of the BGK model have been proposed in order to give the correct transport {coefficients} at the Navier--Stokes level. In \cite{Ho}, Holway  proposed a model that possesses several desirable properties. It not only satisfies the correct conservation laws and the entropy condition, but also yields the Navier-Stokes equations with a Prandtl number less than one through the Chapman-Enskog expansion. This model is known as the ellipsoidal statistical model (ES-BGK) and reads:
\begin{align}\label{ES-BGK}
	\begin{split}
		&\frac{\partial f}{\partial t} + v \cdot \nabla_x f = \frac{\tau}{\varepsilon}(\mathcal{G}[f] - f),
	\end{split}
\end{align}
where the collision frequency $\tau =\frac{\rho T}{\mu(1-\nu)}$ depends on the free parameter $-\frac{1}{2} \le \nu < 1$.
The anisotropic Gaussian distribution $\mathcal{G}[f]$ is defined by
\begin{equation}\label{Gauss}
	\mathcal{G}[f](t,x,v) = \frac{\rho(t,x)}{\sqrt{\textrm{det}(2\pi \mathcal{T}(t,x))}}\exp\left(-\frac{(v-u(t,x))^\top\mathcal{T}(t,x)^{-1}(v-u(t,x))}{2}\right).
\end{equation}
The temperature tensor $\mathcal{T}(t,x)$ and stress tensor $\Theta(t,x)$ are defined as
\begin{equation}\label{TTT}
	\mathcal{T}(t,x) = (1-\nu)T(t,x) Id + \nu \Theta(t,x),
\end{equation}
\begin{equation}\label{rhoTheta}
	\Theta(t,x) = \frac{1}{\rho}\int_{R^{d_v}}(v-u) \otimes (v-u) f(t,x,v)dv = \frac{1}{\rho}\left \langle  (v-u) \otimes (v-u) f(v) \right \rangle,
\end{equation}
respectively, {where $Id$ is the $d_v\times d_v$ identity matrix and $(v-u) \otimes (v-u)$ denotes the tensor product\footnote{{The tensor product of two vectors $a=(a_1,...,a_d),b=(b_1,...,b_d) \in \mathbb{R}^{d}$ means the outer product of $u$ and $v$, i.e., $a\otimes b\in\mathbb{R}^{d\times d}$ with $(a\otimes b)_{ij}=a_ib_j$.}} of two vectors $(v-u)$ and $(v-u)$.}

As emphasized before, ES-BGK model also has a close relationship with fluid models. 
The Chapman-Enskog expansion applied to ES-BGK model gives the compressible Euler equations \cite{DiPa} at the {\em leading order term}:
\begin{equation}\label{Eq:Euler}
	\partial_t \left(
	\begin{array}{l}
		\rho\\
		\rho u \\
		E 
	\end{array}
	\right)
	+
	\nabla_x \cdot \left( 
	\begin{array}{l}
		\rho u \\
		\rho u \otimes u + p Id\\ 
		(E + p ) u
	\end{array}\right) = 0,
\end{equation}
where $p=\rho T$. While at the first-order correction in $\varepsilon$, we obtain the CNS equations \cite{Cerc} (for details see Appendix \ref{App1_bis}):
\begin{align}\label{NSeq}
	\partial_t \left(
	\begin{array}{l}
		\rho\\
		\rho u \\
		E 
	\end{array}
	\right)
	+
	\nabla_x\cdot \left( 
	\begin{array}{l}
		\rho u \\
		\rho u \otimes u + p Id\\ 
		(E + p ) u
	\end{array}\right)=\left( 
	\begin{array}{l}
		0 \\
		\varepsilon \nabla_x\cdot  (\mu\sigma(u))\\ 
		\varepsilon  \nabla_x\cdot( \mu\sigma(u)u + q)
	\end{array}\right)
\end{align}
where the stress tensor $\sigma(u)$ and heat flux $q$ are given by 
$$\sigma(u) = \nabla_x u + (\nabla_x u)^\top - \frac{2}{d_v} \nabla_x \cdot uId,\quad
q = \kappa \nabla_x T,
$$
with viscosity $\mu = \frac{p}{(1-\nu)\tau}$
and thermal conductivity
$
\kappa = \frac{d_v + 2}{2}\frac{p}{\tau}.
$
Thus, the Prandtl number is given by 
$$
\frac{2}{3} \leq Pr = \frac{d_v+2}{2}\frac{\mu}{\kappa} = \frac{1}{1 - \nu} <  \infty,
$$
and this implies that desired such number can be recovered by choosing $\nu$ appropriately. Note that the equation for the total energy can be replaced by the equation for the temperature $T$ \cite{BaGoLe91} {as follows:}
\begin{equation}\label{Temperature}
	\frac{d_v}{2}\rho \left( \partial_t T + u \cdot \nabla_x T \right) + \rho T \nabla_x \cdot u = \mathcal{O}(\varepsilon).
\end{equation}

Considering the relationship with fluid models, when developing numerical methods for ES-BGK model, such methods should have a correct asymptotic behavior, i.e., for small parameter $\varepsilon$, the schemes should degenerate into a good approximation of the fluid asymptotic (compressible Euler or CNS equations). 

In the fluid dynamic limit, i.e., in the case of the limiting compressible Euler equations, numerical methods that work effective while keeping the mesh size and time step fixed as the Knudsen number approaches zero, are referred to as asymptotic preserving (AP) \cite{Jin}.
Additionally, a scheme that not only preserves the correct asymptotic behavior but also maintains high accuracy in the fluid dynamic regime is called asymptotically accurate (AA) \cite{IMEX_book}. 
In other words, a scheme that satisfies both the AP and AA properties exhibits robustness and high accuracy in the fluid dynamic regime, without resolving the small Knudsen number.

{On the construction of AP and AA methods for kinetic models, IMEX time discretization \cite{ARS, CK, Pa-Ru, Boscarino10, Boscarino2007, Boscarino09} have been successfully applied and studied. }We {focus} our literature reviews to the works on the NS limit based on IMEX time discretization applied to BGK or ES-BGK model. 
{There is, of course, a considerable amount of literature on the use of IMEX-RK methods for BGK-type equations, (see, e.g.,\cite{PareschiDimarco, DiPa,  IMEX_book}). For instance, in \cite{Mieussens,TQ}, the authors considered a micro-macro decomposition of the BGK equation and then applied IMEX-RK schemes to the resulting coupled system. In \cite{JinFilbet}, authors introduced a first order IMEX method for ES-BGK model 
	and showed that it is consistent to first order time discretization of CNS equations. High order IMEX-RK methods of type CK (or type II) \cite{Hu} and IMEX multistep methods \cite{PareschiDimarco} are also similarly applied to ES-BGK model, and the methods are shown to be able to capture the NS limit under suitable conditions. However, reference \cite{Hu} focuses solely on a specific subclass of IMEX schemes (namely, type CK or type II methods, \cite{IMEX_book}).}

{This paper provides a broader and more unified analysis of the various IMEX-RK schemes, specifically type I and type II, extending and improving upon existing results in the literature for the ES-BGK model. In particular, we study their asymptotic behavior and generalize the results presented in \cite{Hu}, showing that both types of schemes are capable of capturing the compressible Navier-Stokes (CNS) limit without resolving the small parameter $\varepsilon$.
} 

{Another novelty of the present work is that  {we show numerically the uniform accuracy of specific type of IMEX-RK methods, originally introduced in \cite{Boscarino2017} when} applied to ES-BGK model for a wide range of Knudsen numbers. These IMEX-RK schemes were originally developed and analyzed for a specific class of hyperbolic relaxation systems. {In the context of the ES-BGK model, we show that, under suitable assumptions on the coefficients of such schemes introduced in \cite{Boscarino2017}, consistency with CNS equations  is ensured (see Theorem \ref{thm 2}). Moreover, additionally  order conditions  allow the schemes to maintain the order of accuracy throughout the entire range of Knudsen numbers, thereby guaranteeing uniform accuracy.} These conditions are sufficient but not necessary. However, the price to pay is that the IMEX-RK schemes proposed in \cite{Boscarino2017} require more stages than those commonly used.}


The outline of this paper is the following. In Section 2, we explain IMEX-RK methods for ES-BGK model. In Section 3, we perform asymptotic analysis at the Navier-Stokes level. Then in Section 4, we give several numerical tests in order to validate our theoretical findings. Finally, conclusion will be provided.

\section{IMEX-RK schemes for ES-BGK equation}\label{Sec:IMEXRK}
For the time discretization of \eqref{ES-BGK} we consider an implicit-explicit (IMEX) Runge-Kutta (RK) scheme because the convection term
in \eqref{ES-BGK} is not stiff and the collision term is stiff when Knudsen number is small.

An IMEX-RK scheme usually can be represented by the double Butcher tables:
\begin{equation}\label{BT}
	\begin{array}{c|c}
		\tilde{c} & \tilde{A}\\
		\hline \\[-0.3cm]
		& \tilde{b}^\top
	\end{array},
	\quad 
	\begin{array}{c|c}
		c & {A}\\
		\hline \\[-0.3cm]
		& {b}^\top
	\end{array}.
\end{equation}
Here $\tilde{A} = (\tilde{a}_{ij})$ with $\tilde{a}_{ij} = 0$ for $j \ge i$ and $A = ({a}_{ij})$ with ${a}_{ij} = 0$ for $j > i$ are $s \times s$ matrices, which are associated to the explicit and implicit time discretizations, respectively. The coefficients vectors ${\tilde{b}} = (\tilde{b}_1, ..., \tilde{b}_s)^\top$ and  ${{b}} = ({b}_1, ..., {b}_s)^\top$ represent the weights, and the vectors ${\tilde{c}} = (\tilde{c}_1, ..., \tilde{c}_s)^\top$  and  ${{c}} = ({c}_1, ..., {c}_s)^\top$ are the nodes defined as 
\begin{equation}\label{c_relation}
	\tilde{c}_i = \sum_{j = 1}^{i-1} \tilde{a}_{ij}, \quad {c}_i = \sum_{j = 1}^{i} {a}_{ij}, \quad i = 1,...,s. 
\end{equation}

Now we give some preliminary definitions. Based on the structure of matrix $A$ in the implicit table, the IMEX schemes can be classified into the following types \cite{Boscarino2007, Boscarino10, IMEX_book}:
\begin{defn}\label{TypeI}
	An IMEX-RK method is of {\bf type I} if the matrix $A$ is invertible.
\end{defn}
\begin{defn}\label{TypeII}
	An IMEX-RK method is of {\bf type II} if the matrix $A$ can be written as
	\begin{equation}\label{CK2}
		A=\left(\begin{array}{cc}
			0 & 0\\
			a &  \hat{A}
		\end{array}\right),
	\end{equation}
	where $a=(a_{21},..., a_{s1})^\top \in \mathbb{R}^{s-1}$ and the submatrix $\hat{A} \in \mathbb{R}^{(s-1) \times (s-1)}$ is invertible \cite{CK}; in particular if $a = {\bf 0} \in \mathbb{R}^{s-1}$, $b_1 = 0$, the scheme is said of type ARS, \cite{ARS}.
\end{defn}

We note that IMEX-RK schemes of type II are very attractive because they allow some simplifying assumptions that make order conditions easier to treat, therefore permitting the construction of higher order IMEX-RK schemes \cite{Boscarino10, Boscarino09}. On the other hand, schemes of type I are more amenable to a theoretical analysis since the matrix $A$ of the implicit scheme is invertible. Later, we start our analysis with the latter scheme, type I.

In the following, we will also make use of the following representation of the matrix $\tA$ in the explicit part 
\beq
\tA=
\left(
\begin{array}{ll}
	0 & 0  \\
	\ta & \hat{\tA}\end{array}
\right),
\label{CK2bis}\eeq
where $\ta=(\ta_{21},\ldots,\ta_{s 1})^\top\in\R^{s-1}$ and $\hat{\tA}\in\R^{(s-1)\times (s-1)}$. This representation of matrix $\Tilde{A}$ is useful for the analysis of IMEX-RK methods of type II.

Finally, we give the following definition \cite{Boscarino13_1, Boscarino13_2}:
\begin{defn}\label{GSA}
	If $b_i = a_{si}$ for $i = 1, ..., s$, the scheme is said to be {\bf stiffly accurate} (SA) in the implicit tableau. Moreover, if $\tilde{b}_i = \tilde{a}_{si}$ for $i = 1, ..., s$, the scheme is said to be {\bf globally stiffly accurate} (GSA).
\end{defn}
The first condition in Definition \ref{GSA} guarantees that an $A$-stable implicit tableau is $L$-stable\footnote{A Runge-Kutta method is $L$-stable if $\lim_{z \to \infty} R(z) = 0$, where $R(z)$ is the stability function of the method. If the $A$-stable RK-scheme is SA, i.e., $b^\top A^{-1}{\bf e} = 1$, it follows that $R(\infty) = \lim_{z \to \infty} R(z) = 1-b^\top A^{-1}{\bf e} = 0$, \cite{Hairer}. Here ${\bf e}=(1,1,...,1)^\top$ is a vector of length $s$.}. 
The AP property of IMEX-RK schemes are strongly related to the $L$-stability of the implicit part of the scheme.
Finally, if the IMEX-RK scheme is GSA we have $f^{n+1} = f^{(s)}$, i.e., the numerical solution coincides with the last stage of the method. 

Below, we review IMEX-RK methods of type I and type II applied to ES-BGK model \eqref{ES-BGK}.

\begin{itemize}
	\item \textbf{IMEX-RK method of type I.} Applying an IMEX-RK method of type I to \eqref{ES-BGK}, we get in vector form
	\begin{align}\label{ES_BGK}
		\begin{split}
			&\bF = f^n {\bf e} -  \Delta t \tilde{A} L(\bF) + \frac{\Delta t}{\varepsilon} A {\bf \bar{\tau}}(\mathcal{G}[\bF] - \bF),\\
			&f^{n+1} = f^{n} - \Delta t \tilde{b}^\top L(\bF) + \frac{\Delta t}{\varepsilon} b^\top {\bf \bar{\tau}}(\mathcal{G}[\bF] - \bF),
		\end{split}
	\end{align}
	where $\bF = (f^{(1)}, f^{(2)},..., f^{(s)})^\top  \in \mathbb{R}^{s}$, $L(\bF) = (L(f^{(1)}),...,L(f^{(s)}))^\top \in \mathbb{R}^{s}$, being $L(f^{(k)}) = v \cdot \nabla_x f^{(k)}$ for all $k =1 ,..., s$, $\mathcal{G}[{\bf F}] := (\mathcal{G}(f^{(1)}),...,\mathcal{G}(f^{(s)}))^\top \in \mathbb{R}^{s}$. {A diagonal matrix $\bar{{\bf \tau}} := diag(\tau^{(1)},...,  \tau^{(s)})$ is defined with the relaxation time  $\tau^{(i)}$ associated to $f^{(i)}$} and ${\bf e}=(1,1,...,1)^\top$ is a  vector of length $s$. 
	
	From the first equation in (\ref{ES_BGK}), we get
	\begin{equation}\label{eps}
		\Delta t {\bar \tau} (\mathcal{G}[\bF] - \bF) = \varepsilon  A^{-1}\left( \bF - f^n{\bf e} + \Delta t \tilde{A} L(\bF)\right)
	\end{equation}
	and inserting in the numerical solution, we obtain
	\begin{equation}\label{GSAeq}
		f^{n+1} = (1- b^\top A^{-1}{\bf e})f^n + b^\top  A^{-1}\bF -\Delta t(\tilde{b}^\top- b^\top A^{-1}\tilde{A}) L(\bF).
	\end{equation}
	Therefore, for IMEX-RK schemes of type I the numerical solution is independent on $\varepsilon$ and we are able to pass to the limit $\varepsilon \to 0$ in \eqref{ES_BGK}.
\item \textbf{IMEX-RK method of type II.}
Regarding the IMEX-RK method of type II applied to \eqref{ES-BGK}, with the notations in  (\ref{CK2}) and (\ref{CK2bis}), the scheme reads in vector form
\begin{align}\label{ES_BGK2_CK}
\begin{split}
	& f^{(1)} = f^n,\\
	&\hat{\bF} = f^n \hat{{\bf e}} -  \Delta t \tilde{a} L({f^n}) - \Delta t \hat{\tilde{A}} L({\hat\bF}) + \Delta t a \frac{\tau^n}{\varepsilon}(\mathcal{G}[f^n] - f^n) +\Delta t \hat{A} \frac{\hat{{\bf \bar{\tau}}}}{\varepsilon}(\mathcal{G}[\hat{\bF}] - \hat{\bF}),\\
	&f^{n+1} = f^{n} -\Delta t \tilde{b}_1 L({f^n}) - \Delta t \hat{\tilde{b}}^\top L(\hat{\bF}) + \Delta t b_1 \frac{\tau^n}{\varepsilon}(\mathcal{G}[f^n] - f^n) + \Delta t \hat{b}^\top \frac{\hat{{\bf \bar{\tau}}}}{\varepsilon}(\mathcal{G}[\bF] - \bF),
\end{split}
\end{align}
{where $\h \bF = (f^{(2)}, ..., f^{(s)})^\top \in \mathbb{R}^{s-1}$, $L(\h\bF) = (L(f^{(2)}),...,L(f^{(s)}))^\top \in \mathbb{R}^{s-1}$, $\h \be=(1,1,...,1)^\top\in \mathbb{R}^{s-1}$, $\hat{\tilde{b}}=(\tilde{b}_2,\tilde{b}_3,...,\tilde{b}_s)^\top$, $\tau^n=\tau^{(1)}$,   $\h \bta := diag(\tau^{(2)},...,\tau^{(s)})$ and $\mathcal{G}[{\bf \h{F}}] := (\mathcal{G}(f^{(2)}),...,\mathcal{G}(f^{(s)}))^\top \in \mathbb{R}^{s-1}$.} 
From the second equation in (\ref{ES_BGK2_CK}) we get 
\begin{equation}
\Delta t \hat{\bar{\tau}}(\mathcal{G}[\hat{\bF}] - \hat{\bF})=\varepsilon
\hat{A}^{-1}\left[\hat\bF-f^{n}\hat{\bf e}+\Delta t \ta
L(f^{n})+\Delta t
\hat{\tA}L(\hat\bF)\right]- \Delta t 
\hat{A}^{-1}a {\tau^n}(\mathcal{G}[f^n] - f^n),
\label{eq:ll1}
\end{equation}
and substituting into the numerical solution in  (\ref{ES_BGK2_CK}), we obtain
\begin{align}\label{numsolII}
\begin{split}
	f^{n+1} &= (1-\hat{b}^\top\hat{A}^{-1}\hat{{\bf e}})f^{n} -\Delta t \left(\tilde{b}_1 - \hat{b}^\top\hat{A}^{-1}\tilde{a}\right) L({f^n}) - \Delta t \left(\hat{\tilde{b}}^\top - \hat{b}^\top\hat{A}^{-1}\hat{\tilde{A}}\right) L(\hat{\bF}) \\
	&+ \Delta t \left(b_1 - \hat{b}^\top\hat{A}^{-1}a\right)\frac{\tau^n}{\varepsilon}(\mathcal{G}[f^n] - f^n) +\hat{b}^\top\hat{A}^{-1} {\hat\bF}.
\end{split}
\end{align}
Unfortunately, here the numerical solution depends on $\varepsilon$. Now, in order to be able to pass to the limit $\varepsilon \to 0$, 
we can either require that the initial conditions are well-prepared\footnote{The initial data for equation \eqref{Beq} are said well-prepared if $f_0(x.v) = \mathcal{M}[f_0](x,y) + g_{\varepsilon}(x,v)$, $\lim_{\varepsilon \to0}g_{\varepsilon}(x,v) = 0$.} \cite{IMEX_book}, or impose that the additional condition $b_1-\hat{b}^\top\hat{A}^{-1}a=0$ has to be satisfied, which allows us to take the limit $\varepsilon \to 0$ in \eqref{ES_BGK2_CK}. Note that the condition  $b_1-\hat{b}^\top\hat{A}^{-1}a=0$, is automatically satisfied if the scheme is SA, i.e., $\hat{b}^\top \hat{A}^{-1}={\bf \hat{e}}^\top_{s-1}$, with $\hat{{\bf e}}^\top_{s-1}=(0,\cdots,0,1)$ vector of length $s-1$. 
Alternatively, this condition also holds for IMEX schemes of type ARS, where having $a = 0$ and $b_1 = 0$ condition $b_1-\hat{b}^\top\hat{A}^{-1}a=0$ is also satisfied. 

\end{itemize}
In both types, the methods require implicit solver for computing relaxation terms. However, the use of collision invariants allows us to treat the implicit terms explicitly. The details will be provided in Section \ref{Sect:3}.

\section{Asymptotic Properties of the IMEX RK Schemes}\label{Sect:3}

In this section, we discuss in detail the asymptotic properties of the IMEX-RK schemes of type I and II with respect to the Euler and Navier-Stokes limits.  We begin with a brief overview and proof of the results related to the Euler limit, highlighting the preservation of leading-order asymptotic. More details can be found in \cite{DiPa, IMEX_book, JinFilbet,Hu}. Next, we will analyze the asymptotic properties of IMEX-RK schemes of type I and II in the Navier-Stokes limit.

\subsection{Preserving the Euler Limit.}
We start with an arbitrary IMEX-RK scheme written for each stage $k$:
\begin{equation}\label{IMEXcomp}
f^{(k)} = f^{n} - \Delta t\sum_{\ell=1}^{k-1}\tilde{a}_{k\ell} v \cdot \nabla_x f^{(\ell)} + \frac{\Delta t}{\varepsilon}\sum_{\ell=1}^{k}a_{k\ell}\tau^{(\ell)}\left(\mathcal{G}[f^{(\ell)}] - f^{(\ell)}\right), \quad k = 1,...,s
\end{equation}
with numerical solution
\begin{equation}\label{solnum}
f^{n+1} = f^{n} - \Delta t\sum_{k=1}^{s}\tilde{b}_{k} v \cdot \nabla_x f^{(k)} + \frac{\Delta t}{\varepsilon}\sum_{k=1}^{s}{b}_{k}\tau^{(k)}\left(\mathcal{G}[f^{(k)}] - f^{(k)}\right).
\end{equation}
At every $k$th stage of (\ref{IMEXcomp}), the computation of $f^{(k)}$ requires implicit treatment of  $\tau^{(k)}$ and $\mathcal{G}[f^{(k)}]$. This difficulty can be circumvented by approximating $\tau^{(k)}$ and $\mathcal{G}[f^{(k)}]$ explicitly. For this, we rewrite (\ref{IMEXcomp}) as 
\begin{equation}\label{fK}
f^{(k)} = \frac{\varepsilon}{\varepsilon + \tau^{(k)}\Delta t a_{kk}} f_*^{(k)} + \frac{\Delta t \tau^{(k)} a_{kk}}{\varepsilon + \tau^{(k)}\Delta t a_{kk}}\mathcal{G}[f^{(k)}] 
\end{equation}
with
$$
f_*^{(k)} = f^{n} - \Delta t\sum_{\ell=1}^{k-1}\tilde{a}_{k\ell} v \cdot \nabla_x f^{(\ell)} + \frac{\Delta t}{\varepsilon}\sum_{\ell=1}^{k-1}a_{k\ell} \tau^{(\ell)}\left(\mathcal{G}[f^{(\ell)}] - f^{(\ell)}\right).
$$
Taking the moments $\langle \cdot, \phi \rangle := \int_{\mathbb{R}^{d_v}} \cdot\, \phi(v) dv$ with 
$
\phi(v) := \left( 1, v, |v|^2/2\right)^\top 
$
on both sides of (\ref{fK}), and using the conservation properties (\ref{G__U}), the implicit part is canceled, i.e.
$$
\biggl\langle(\mathcal{G}[f^{(k)}] -f^{(k)})\phi \biggr\rangle = 0.
$$
Therefore, one obtains the macroscopic quantities $U:= (\rho, \rho u, E)$ at every stage $k$:
\begin{align}\label{LimitEq_1}
\begin{split}
U^{(k)} &= \int_{\mathbb{R}^{d_v}}   \left(f^{n} - \Delta t\sum_{\ell=1}^{k-1}\tilde{a}_{k\ell} v \cdot \nabla_x f^{(\ell)} \right) \phi(v) dv = \biggl\langle  f^n \phi \biggr\rangle - \Delta t\sum_{\ell=1}^{k-1}\tilde{a}_{k\ell} \biggl\langle  v \cdot \nabla_x f^{(\ell)} \phi \biggr\rangle,
\end{split}
\end{align}
and numerical solution
\begin{align}\label{LimitEq_2}
\begin{split}
U^{n+1} &= \biggl\langle  f^n \phi \biggr\rangle - \Delta t\sum_{k=1}^{s}\tilde{b}_{k} \biggl\langle  v \cdot \nabla_x f^{(k)} \phi \biggr\rangle.
\end{split}
\end{align}
Note that using (\ref{LimitEq_1}) we can obtain $\rho^{(k)}$, $u^{(k)}$ and $T^{(k)}$ at every stage $k$, which enables us to compute explicitly  $\tau^{(k)}$. 
To evaluate $\mathcal{G}[f^{(k)}]$, however, we need to determine the stress tensor $\Theta^{(k)}$ in (\ref{rhoTheta}). To achieve this, we first define the tensor $\Sigma$ as follows
\begin{equation}\label{SigmaAAA}
\Sigma = \int_{\mathbb{R}^{d_v}} {v} \otimes { v} f d { v} = \rho(\Theta + u \otimes u ),
\end{equation}
and consider the stage values: 
\begin{equation}\label{SigmaAAA_bis}
\Sigma^{(k)} = \int_{\mathbb{R}^{d_v}} {v} \otimes { v} f^{(k)} d { v} = \rho^{(k)}(\Theta^{(k)} + u^{(k)} \otimes u^{(k)} ).
\end{equation}
Note that $\Theta^{(k)}$ can be obtained by computing $\Sigma^{(k)}$.
Next, we use (\ref{TTT}) and \eqref{SigmaAAA} to get
$$
\rho \mathcal{T} = \rho[(1-\nu)T Id + \nu \Theta] = \rho(1-\nu)T Id + \nu \Sigma - \nu \rho u \otimes u,
$$
and combine this with
$$
\int_{\mathbb{R}^{d_v}} v \otimes v \, \mathcal{G}[f](v) dv = \rho(\mathcal{T} + u \otimes u),
$$
to obtain
\begin{align}\label{identity}
\int_{\mathbb{R}^{d_v}} v \otimes v \,\left( \mathcal{G}[f] -f\right)dv =  (1-\nu)\left(\rho\left({T}Id  +  u \otimes u\right)\ - \Sigma \right).
\end{align}
Now, we multiply the scheme  (\ref{IMEXcomp}) by $v \otimes v$ and integrate it over $v$, and use the relation \eqref{identity} to derive the equation for $\Sigma^{(k)}$:
\begin{align}\label{sigma_comp}
\Sigma^{(k)}   
=   \frac{\varepsilon }{\varepsilon + (1-\nu)\tau^{(k)}\Delta t a_{kk}}\Sigma^*  + \frac{\Delta t (1-\nu)\tau^{(k)} a_{kk}}{\varepsilon  + (1-\nu)\tau^{(k)}\Delta t a_{kk}} \rho^{(k)}\left({T}^{(k)} Id +  u^{(k)} \otimes u^{(k)}\right),
\end{align}
where
$$
\Sigma^*  = \Sigma^{n} - \Delta t \sum_{\ell = 1}^{k-1} \tilde{a}_{k, \ell} \nabla_x \cdot \biggl\langle v \otimes v \,  v f^{(\ell)} \biggr\rangle  + \frac{\Delta t}{\varepsilon}\sum_{\ell = 1}^{k-1} a_{k\ell}(1-\nu)\tau^{(\ell)}[ \rho^{(\ell)}(T^{(\ell)}Id + u^{(\ell)}\otimes u^{(\ell)}) - \Sigma^{(\ell)}].
$$
Then, the IMEX-RK scheme reads
\begin{align}\label{SCHEME_imp}
\begin{split}
\Sigma^{(k)}  & = \frac{\varepsilon }{\varepsilon + (1-\nu)\tau^{(k)}\Delta t a_{kk}}\Sigma^*  + \frac{\Delta t (1-\nu)\tau^{(k)} a_{kk}}{\varepsilon  + (1-\nu)\tau^{(k)}\Delta t a_{kk}} \rho^{(k)}\left({T}^{(k)} Id +  u^{(k)} \otimes u^{(k)}\right),\\
f^{(k)} & = \frac{\varepsilon}{\varepsilon + \tau^{(k)}\Delta t a_{kk}} f_*^{(k)} + \frac{\Delta t \tau^{(k)} a_{kk}}{\varepsilon + \tau^{(k)}\Delta t a_{kk}}\mathcal{G}[f^{(k)}], 
\end{split}
\end{align}
and
\begin{align}\label{SCHEME_imp_bis}
\begin{split}
\Sigma^{n+1} &= \Sigma^{n} - \Delta t\sum_{k=1}^{s}\tilde{b}_{k} \nabla_x \cdot \biggl\langle v \otimes v \,  v f^{(k)} \biggr\rangle + \frac{\Delta t}{\varepsilon}\sum_{k=1}^{s}{b}_{k}\biggl\langle v \otimes v \,  \left(\tau^{(k)}\left(\mathcal{G}[f^{(k)}] - f^{(k)}\right)\right)\biggr\rangle,\\
f^{n+1} &= f^{n} - \Delta t\sum_{k=1}^{s}\tilde{b}_{k} v \cdot \nabla_x f^{(k)} + \frac{\Delta t}{\varepsilon}\sum_{k=1}^{s}{b}_{k}\tau^{(k)}\left(\mathcal{G}[f^{(k)}] - f^{(k)}\right).
\end{split}
\end{align}
Regarding the leading order limit, i.e., Euler equation, the following results for IMEX-RK schemes of type I and II have been proved in \cite{IMEX_book,Hu}.
\begin{prop}
\label{prop 1} 
Consider the IMEX-RK method (\ref{IMEXcomp})-(\ref{solnum}) of type I.  Then in the limit $\varepsilon \to 0$, for a fixed $\Delta t$, the scheme becomes the explicit RK scheme characterized by the pair $(\tilde{A},\tilde{b})$ applied to the limit Euler equations (\ref{Eq:Euler}). 
\end{prop}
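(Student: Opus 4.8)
The plan is to exploit the invertibility of $A$, which for type I schemes already produced the $\varepsilon$-independent relation (\ref{eps}). First I would rewrite it as
$$
\bar{\tau}(\mathcal{G}[\bF] - \bF) = \frac{\varepsilon}{\Delta t} A^{-1}\left(\bF - f^n \be + \Delta t \tilde{A} L(\bF)\right),
$$
and observe that, for a fixed $\Delta t$ and assuming the stage vector $\bF$ stays bounded (with each $\tau^{(k)}$ bounded away from zero) as $\varepsilon \to 0$, the right-hand side vanishes. Hence in the limit every stage satisfies the fixed-point condition $\mathcal{G}[f^{(k)}] = f^{(k)}$.

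The next step is the crucial observation that this condition forces $f^{(k)}$ to be a local Maxwellian. Indeed, since $\mathcal{G}[f^{(k)}]$ shares the density and mean velocity of $f^{(k)}$, matching centered second moments of both sides and using the definition (\ref{TTT}) together with (\ref{rhoTheta}) gives $\Theta^{(k)} = (1-\nu)T^{(k)} Id + \nu \Theta^{(k)}$; because $\nu < 1$ this yields $\Theta^{(k)} = T^{(k)} Id$, so the temperature tensor is isotropic, $\mathcal{T}^{(k)} = T^{(k)} Id$, and $\mathcal{G}[f^{(k)}]$ collapses to the Maxwellian (\ref{Max}). Thus $f^{(k)} \to \mathcal{M}[U^{(k)}]$, a Maxwellian entirely determined by its own moments $U^{(k)}$.

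I would then take moments against $\phi(v) = (1, v, |v|^2/2)^\top$. By the collision invariants the stiff relaxation term integrates to zero at every stage, leaving exactly the macroscopic relations (\ref{LimitEq_1})--(\ref{LimitEq_2}). Passing to the limit and inserting $f^{(\ell)} = \mathcal{M}[U^{(\ell)}]$, the flux moments reduce to $\langle v \cdot \nabla_x f^{(\ell)}\,\phi \rangle = \nabla_x\cdot\langle v\otimes\phi\,\mathcal{M}[U^{(\ell)}]\rangle$, and the standard Gaussian integrals reproduce the divergence of the Euler flux in (\ref{Eq:Euler}), namely $(\rho u,\ \rho u \otimes u + p\,Id,\ (E+p)u)^\top$ with $p=\rho T$. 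Consequently the stage and update equations become precisely
$$
U^{(k)} = U^n - \Delta t \sum_{\ell=1}^{k-1}\tilde{a}_{k\ell}\, \nabla_x\cdot \mathcal{F}(U^{(\ell)}), \qquad U^{n+1} = U^n - \Delta t \sum_{k=1}^{s}\tilde{b}_k\, \nabla_x\cdot \mathcal{F}(U^{(k)}),
$$
where $\mathcal{F}(U)$ denotes the Euler flux of (\ref{Eq:Euler}); this is the explicit RK scheme with tableau $(\tilde{A},\tilde{b})$ applied to the Euler system, as claimed.

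The main obstacle I expect lies at the two delicate limit passages: first, justifying that $\bF$ admits a regular bounded limit as $\varepsilon \to 0$ so that the right-hand side of (\ref{eps}) genuinely vanishes, which is the formal asymptotic assumption underpinning the entire analysis; and second, the deduction that $\mathcal{G}[f^{(k)}] = f^{(k)}$ yields a true Maxwellian, which rests on the isotropy argument above and essentially on $\nu < 1$. By contrast, the moment computations producing the Euler fluxes are routine Gaussian integrals and pose no real difficulty.
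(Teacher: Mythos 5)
Your proposal is correct and follows essentially the same route as the paper: the paper's Section~2 derivation of \eqref{eps} (exploiting invertibility of $A$ to make the relaxation term $\mathcal{O}(\varepsilon)$), the moment equations \eqref{LimitEq_1}--\eqref{LimitEq_2} obtained from the collision invariants, and the appendix fact \eqref{Prop1} that $f=\mathcal{G}[f]$ forces $f=\mathcal{M}[f]$ (via the same isotropy argument $\Theta=(1-\nu)T\,Id+\nu\Theta$ with $\nu<1$) are exactly the ingredients you assemble. The only caveat, which you yourself flag, is the formal assumption that the stages remain bounded as $\varepsilon\to 0$, and the paper operates at the same formal level.
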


\begin{cor}\label{Cor_typeA}
Furthermore, if the scheme is GSA, then
\begin{equation}\label{limitE}
\lim_{\varepsilon \to 0} f^{n+1} = \lim_{\varepsilon \to 0} \mathcal{M}[f^{n+1}].
\end{equation}
\end{cor}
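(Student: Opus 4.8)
The plan is to use global stiff accuracy to collapse the update onto the last internal stage, and then to show that in the limit $\varepsilon\to 0$ this stage relaxes not merely to the anisotropic Gaussian but to the isotropic Maxwellian, the extra content being the asymptotic isotropy of the stress tensor.

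First I would note that GSA forces $f^{n+1}=f^{(s)}$: with $\tilde b_k=\tilde a_{sk}$, $b_k=a_{sk}$ and $\tilde a_{ss}=0$, the numerical solution (\ref{solnum}) reproduces term by term the $s$th stage of (\ref{IMEXcomp}). It therefore suffices to identify $\lim_{\varepsilon\to0}f^{(s)}$. Writing (\ref{fK}) at $k=s$ and using that for a type I scheme the lower triangular matrix $A$ is invertible, so that $a_{ss}\neq 0$, the coefficient of $f_*^{(s)}$ is $\mathcal{O}(\varepsilon)$ while the coefficient of $\mathcal{G}[f^{(s)}]$ tends to $1$; hence $f^{(s)}\to\mathcal{G}[f^{(s)}]$. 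This already shows the limit is a (possibly anisotropic) Gaussian, and the remaining and essential task is to prove that this Gaussian degenerates to a Maxwellian.

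The heart of the argument is to show $\Theta^{(s)}\to T^{(s)}Id$, since by (\ref{TTT}) this yields $\mathcal{T}^{(s)}\to T^{(s)}Id$ and hence $\mathcal{G}[f^{(s)}]\to\mathcal{M}[f^{(s)}]$. I would introduce the stress deviation $r^{(k)}:=\rho^{(k)}\big(T^{(k)}Id+u^{(k)}\otimes u^{(k)}\big)-\Sigma^{(k)}$, which by (\ref{SigmaAAA_bis}) equals $\rho^{(k)}\big(T^{(k)}Id-\Theta^{(k)}\big)$, so that $r^{(k)}\to0$ is exactly the isotropy we seek. A short manipulation of (\ref{sigma_comp}) gives the compact identity
\begin{equation*}
r^{(k)}=\frac{\varepsilon}{\varepsilon+(1-\nu)\tau^{(k)}\Delta t\,a_{kk}}\Big(\rho^{(k)}\big(T^{(k)}Id+u^{(k)}\otimes u^{(k)}\big)-\Sigma^*\Big).
\end{equation*}
I would then prove $r^{(k)}=\mathcal{O}(\varepsilon)$ by induction on $k$. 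The base case $k=1$ is immediate because $\Sigma^*=\Sigma^n$ and the prefactor is $\mathcal{O}(\varepsilon)$. For the inductive step, the only obstruction is the apparently singular contribution $\frac{\Delta t}{\varepsilon}\sum_{\ell=1}^{k-1}a_{k\ell}(1-\nu)\tau^{(\ell)}r^{(\ell)}$ hidden in $\Sigma^*$; the inductive hypothesis $r^{(\ell)}=\mathcal{O}(\varepsilon)$ for $\ell<k$ is precisely what keeps $\Sigma^*$ bounded, and multiplication by the $\mathcal{O}(\varepsilon)$ prefactor then returns $r^{(k)}=\mathcal{O}(\varepsilon)$. In particular $r^{(s)}\to0$, hence $\Theta^{(s)}\to T^{(s)}Id$. (Here the convergence of the conserved moments $\rho^{(k)},u^{(k)},T^{(k)}$ used throughout is guaranteed by Proposition \ref{prop 1}, since by (\ref{LimitEq_1}) they are built only from $f^n$ and the explicit fluxes of the lower stages.)

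Finally I would chain the limits: $\lim_{\varepsilon\to0}f^{n+1}=\lim_{\varepsilon\to0}f^{(s)}=\lim_{\varepsilon\to0}\mathcal{G}[f^{(s)}]=\lim_{\varepsilon\to0}\mathcal{M}[f^{(s)}]=\lim_{\varepsilon\to0}\mathcal{M}[f^{n+1}]$, where the last equality holds because $\mathcal{M}$ depends only on the conserved moments and $f^{n+1}=f^{(s)}$. The main obstacle is the inductive step above: a naive passage to the limit in (\ref{sigma_comp}) stumbles on the $1/\varepsilon$ singularity inside $\Sigma^*$, and the real work is to propagate the $\mathcal{O}(\varepsilon)$ smallness of the stress deviation stage by stage, which is what forces the temperature tensor to be isotropic and collapses the Gaussian onto the Maxwellian.
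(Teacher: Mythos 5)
Your reduction $f^{n+1}=f^{(s)}$ under GSA and your final chaining of limits match the paper's own argument, which reads this off from \eqref{GSAeq}: GSA gives $b^\top A^{-1}={\bf e}^\top_s$ and $\tilde{b}^\top=b^\top A^{-1}\tilde{A}$, so \eqref{GSAeq} collapses to $f^{n+1}=f^{(s)}$. Your treatment of the isotropy of the stress tensor is also sound, and it is genuinely different from the paper's: the paper passes to the limit in the vector identity \eqref{eps}, concludes that the limit stages satisfy $\bF=\mathcal{G}[\bF]$, and then invokes the algebraic equivalence \eqref{Prop1} (if $f=\mathcal{G}[f]$ then $\Theta=\mathcal{T}$, and \eqref{TTT} with $\nu<1$ forces $\Theta=T\,Id$, hence $\mathcal{G}[f]=\mathcal{M}[f]$), whereas you derive the quantitative bound $r^{(k)}=\mathcal{O}(\varepsilon)$ stage by stage from the moment equations \eqref{sigma_comp}. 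Your route proves a slightly stronger, per-stage statement; the paper's route is shorter because $A^{-1}$ handles all stages at once and the isotropy comes for free from the fixed-point property.

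There is, however, a genuine gap in your step ``$f^{(s)}\to\mathcal{G}[f^{(s)}]$''. In \eqref{fK} the coefficient of $f_*^{(s)}$ is indeed $\mathcal{O}(\varepsilon)$, but $f_*^{(s)}$ itself contains $\frac{\Delta t}{\varepsilon}\sum_{\ell<s}a_{s\ell}\tau^{(\ell)}\left(\mathcal{G}[f^{(\ell)}]-f^{(\ell)}\right)$, which is a priori $\mathcal{O}(1/\varepsilon)$ --- exactly the singularity you yourself flag for $\Sigma^*$. Your induction on $r^{(k)}$ does not repair this: by \eqref{identity} it controls only the second moments $\left\langle v\otimes v\,(\mathcal{G}[f^{(\ell)}]-f^{(\ell)})\right\rangle$, not the functions $\mathcal{G}[f^{(\ell)}]-f^{(\ell)}$ themselves, so $\varepsilon f_*^{(s)}\to 0$ does not follow from what you have established. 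The fix requires nothing new: either run the identical induction at the level of the distribution functions, noting from \eqref{fK} that $f^{(k)}-\mathcal{G}[f^{(k)}]=\frac{\varepsilon}{\varepsilon+\tau^{(k)}\Delta t a_{kk}}\left(f_*^{(k)}-\mathcal{G}[f^{(k)}]\right)=\mathcal{O}(\varepsilon)$ once $f_*^{(k)}$ is bounded via the inductive hypothesis at stages $\ell<k$; or simply use the paper's identity \eqref{eps}, which yields $\mathcal{G}[\bF]-\bF=\mathcal{O}(\varepsilon)$ for all stages simultaneously, since the right-hand side contains no $1/\varepsilon$ factors. With either repair your argument closes; note also that once you control $f^{(\ell)}-\mathcal{G}[f^{(\ell)}]$ at the function level, the $\Sigma$-induction becomes optional, since the isotropy in the limit then follows from \eqref{Prop1}.
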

\begin{rem}\label{Rem_TypeI_b}
According to Proposition \ref{prop 1}, the limit scheme is both AP and AA. In other words, as $\varepsilon \to 0$, the scheme remains stable and accurate, precisely matching the explicit tableau of the IMEX-RK method. \\
Corollary \ref{Cor_typeA} claims that an important property of the IMEX schemes of type I is obtained if in the limit $\varepsilon \to 0$ the distribution function is projected over the equilibrium, i.e.{,} $f^{n+1} = \mathcal{M}(f^{n+1})$. From \eqref{GSAeq} it is clear that this
property is achieved if the scheme is GSA  i.e., $b^\top A^{-1}={\bf e}^\top_s$, with ${\bf e}^\top_s = (0,...0,1)$ vector of length $s$, and $b^\top A^{-1}\tilde{A} = \tilde{b}^\top$,  \cite{IMEX_book, DiPa}.\\
Note that GSA property is not essential for type I schemes to result in AP and AA for limit Euler equations, 
(see \cite{IMEX_book, Boscarino2007, Boscarino09,  Boscarino10}). 
\end{rem}

\begin{prop}
\label{prop 2}
Consider a GSA IMEX-RK method (\ref{IMEXcomp})-(\ref{solnum}) of type II,  then in the limit $\varepsilon \to 0$, for fixed $\Delta t$ and well-prepared initial data, the scheme becomes the explicit RK scheme characterized by the pair $(\tilde{A},\tilde{b})$ applied to the limit Euler equations (\ref{Eq:Euler}).  
\end{prop}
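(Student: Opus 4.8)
The plan is to adapt the moment-based argument behind Proposition~\ref{prop 1} to the type~II structure \eqref{ES_BGK2_CK}, reserving the crucial extra care for the explicit first stage $f^{(1)}=f^n$, where the well-prepared data hypothesis and the GSA structure become indispensable. First I would take moments against $\phi(v)$ of each stage in \eqref{ES_BGK2_CK}. Exactly as in the derivation of \eqref{LimitEq_1}, the conservation identity $\langle(\mathcal{G}[f^{(k)}]-f^{(k)})\phi\rangle = 0$ kills every collision contribution at each stage, so the macroscopic moments $U^{(k)}$ obey the explicit RK stage relations driven purely by the transport fluxes $\langle v\cdot\nabla_x f^{(\ell)}\,\phi\rangle$; note this holds for all $\varepsilon$, before any limit is taken.

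Next I would pass to the limit $\varepsilon\to 0$ in the implicit block $\hat{\bF}$, which is governed by the invertible submatrix $\hat A$. Rearranging the stage equation as in \eqref{eq:ll1} (analogously to \eqref{fK}) shows that the relaxation term satisfies $\mathcal{G}[f^{(k)}]-f^{(k)}=\mathcal{O}(\varepsilon)$, so each $f^{(k)}$ collapses onto its Gaussian. Feeding this into the $\Sigma$-equation \eqref{sigma_comp} gives, in the limit, $\Sigma^{(k)}\to\rho^{(k)}(T^{(k)}Id+u^{(k)}\otimes u^{(k)})$, i.e. the stress tensor becomes isotropic, $\Theta^{(k)}=T^{(k)}Id$; then \eqref{TTT} forces $\mathcal{T}^{(k)}=T^{(k)}Id$ and hence $\mathcal{G}[f^{(k)}]\to\M[f^{(k)}]$. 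Thus every limiting stage distribution is the local Maxwellian fixed by $U^{(k)}$, the transport fluxes reduce to Euler fluxes, and the $U^{(k)}$-recursion becomes precisely the explicit scheme $(\tilde A,\tilde b)$ applied to \eqref{Eq:Euler}.

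Because $f^{(1)}=f^n$ is set explicitly, it is not automatically projected onto equilibrium, and this is where well-prepared data enters: writing $f^n=\M[f^n]+g_\varepsilon$ with $g_\varepsilon\to 0$ gives $f^{(1)}\to\M[f^n]$, so its flux is the Euler flux evaluated at $U^n$ and the recursion is consistent from the outset. Finally the GSA property yields $f^{n+1}=f^{(s)}$, equivalently $U^{n+1}=U^{(s)}$ through \eqref{LimitEq_2}, closing the argument so that one step of the full scheme coincides with one step of the explicit RK method on the Euler system.

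I expect the main obstacle to be the first, explicit stage and the potentially singular term $b_1\frac{\tau^n}{\varepsilon}(\mathcal{G}[f^n]-f^n)$ appearing in \eqref{numsolII}: for type~II the limit can genuinely fail without either well-prepared data or the condition $b_1-\hat b^\top\hat A^{-1}a=0$ (automatic under SA/GSA). Most of the care should therefore go into an induction over the stages, showing that $f^{(\ell)}\to\M[f^{(\ell)}]$ for $\ell<k$ propagates to stage $k$, and into verifying that this first-stage term leaves no surviving $\varepsilon$-dependence.
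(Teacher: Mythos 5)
Your proposal is correct and takes essentially the same route the paper intends: the paper states Proposition \ref{prop 2} without a self-contained proof (deferring to \cite{IMEX_book,Hu}), but the argument you assemble — moments of \eqref{ES_BGK2_CK} kill the collision terms for every $\varepsilon$ as in \eqref{LimitEq_1}--\eqref{LimitEq_2}, the invertibility of $\hat A$ in \eqref{eq:ll1} forces the implicit-block stages onto Maxwellians, well-prepared data handles the explicit first stage, and GSA/SA (via $b_1-\hat b^\top\hat A^{-1}a=0$) removes the singular term in \eqref{numsolII} and yields $f^{n+1}=f^{(s)}$ — is precisely the machinery Section \ref{Sec:IMEXRK} sets up for this purpose. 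One minor sharpening: under the paper's weak notion of well-prepared data ($g_\varepsilon\to 0$ only), the right-hand side of \eqref{eq:ll1} gives $\mathcal{G}[\hat{\bF}]-\hat{\bF}=o(1)$ rather than $\mathcal{O}(\varepsilon)$, because of the term $\hat A^{-1}a\,\tau^n(\mathcal{G}[f^n]-f^n)$, but this does not affect the limiting statement you prove.
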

\begin{rem}\label{GSAnot}
Since the scheme is GSA, it ensures that the initial value remains consistent at the next time step, i.e., $ f^{n+1} = \mathcal{M}[f^{n+1}]$. Without the assumption of GSA and the consistency of the initial data in Proposition \ref{prop 2},  as discussed for the type II case in Section \ref{Sec:IMEXRK}, IMEX-RK schemes of type ARS ($a = 0$ and $b_1 = 0$), remain asymptotic-preserving (AP) without requiring additional conditions (in (2.23) the quantity $(b_1 - \hat{b}^TA^{-1} a) = 0$), although they are not necessarily asymptotically accurate (AA). However, imposing the additional condition $\tilde{b}_1 = 0$ ensures asymptotic accuracy (see for more details \cite{IMEX_book}).
\end{rem}

\QQ{
}

Usually, to guarantee that an IMEX-RK scheme of type I or II remains robust and stable for any value of $\varepsilon$, that is, performs correctly across both the rarefied and fluid regimes, assuming GSA condition may achieve this task, particularly in the fluid regime, without additional order conditions, \cite{IMEX_book, Boscarino2017}. However, as noted in Remark \ref{GSAnot}, we can consider Type ARS schemes that are not GSA, but characterized by some  assumptions on the coefficients of the scheme in order to
guarantee both AP and AA properties.

We conclude this section by considering an explicit RK scheme characterized by $(\tilde{A}, \tilde{b})$ applied to the limit compressible Euler equations (\ref{Eq:Euler}) with $T = p/\rho$, which takes the form:
\begin{align}\label{ExS2}
\begin{split}
\rho^{(k)} &=  \rho^{n} - \Delta t \sum_{\ell = 1}^{k-1} \tilde{a}_{k\ell}\nabla_x \cdot  (\rho^{(\ell)}u^{(\ell)}),\\
(\rho u)^{(k)} &=  (\rho u)^{n} - \Delta t \sum_{\ell = 1}^{k-1} \tilde{a}_{k\ell}\nabla_x \cdot \left((\rho u)^{(\ell)}\otimes u^{(\ell)} + \rho^{(\ell)} T^{(\ell)}Id\right) ,\\
E^{(k)} &=  E^{n} - \Delta t \sum_{\ell = 1}^{k-1}\tilde{a}_{k\ell} \nabla_x  \cdot \left((E^{(\ell)} + \rho^{(\ell)}T^{(\ell)}) u^{(\ell)}\right),
\end{split}
\end{align}
for $k=1,...,s$, with numerical solution
\begin{align}\label{ExS3}
\begin{split}
\rho^{ n+1} &=  \rho^{n} - \Delta t \sum_{k = 1}^{s} \tilde{b}_k \nabla_x \cdot  (\rho^{(k)}u^{(k)})  ,\\
(\rho u)^{n+1} &=  (\rho u)^{n} - \Delta t \sum_{k = 1}^{s} \tilde{b}_k \nabla_x \cdot \left((\rho u)^{(k)}\otimes u^{(k)} + \rho^{(k)} T^{(k)}Id\right)  ,\\
E^{n+1	} &=  E^{n} - \Delta t \sum_{k = 1}^{s} \tilde{b}_k \nabla_x \cdot \left((E^{(k)} + \rho^{(k)}T^{(k)}) u^{(k)}\right).
\end{split}
\end{align}
Note that in (\ref{ExS2})-(\ref{ExS3}), to the leading-order, we can exchange the energy equation with the temperature one (\ref{Temperature}), so we get
\begin{align}\label{Temp}
T^{(k)} = T^n -  \Delta t\sum_{\ell = 1}^{k-1} \tilde{a}_{k\ell}\left( u^{(\ell)}\cdot \nabla_x T^{(\ell)} +  \frac{2}{d_v}T^{(\ell)}\nabla_x \cdot u^{(\ell)} \right),
\end{align}
\begin{align}\label{Temp2}
T^{n+1} = T^n -\Delta t \sum_{k = 1}^{s} \tilde{b}_{k}\left( u^{(k)}\cdot \nabla_x T^{(k)} + \frac{2}{d_v} T^{(k)}\nabla_x \cdot u^{(k)}\right).
\end{align}


\subsection{Preserving the Navier-Stokes Limit.}
In this section, we analyze the asymptotic behavior of IMEX-RK schemes of type I and II for ES-BGK equations (\ref{ES-BGK}), and prove that for small value of $\varepsilon$, these schemes asymptotically capture the NS limit without resolving $\varepsilon$, providing a numerical scheme for the corresponding CNS equations (\ref{NSeq}). 
{The two main Theorems below not only extend the asymptotic analysis of \cite{Hu}  by considering genereal IMEX RK methods, but also result in explicit-type RK schemes for CNS equations which are consistent with the results of \cite{Boscarino2017}.
}

At the Navier-Stokes level, (for small but non-negligible Knudsen numbers, $\varepsilon   \ll 1$),  the GSA condition is not crucial to ensure consistency with CNS equations for both types of IMEX-RK schemes. Therefore, for the subsequent analysis, we do not  impose the GSA condition on the scheme.

{\noindent$\bullet$ {\bf IMEX-RK scheme of type I}. We first analyze the IMEX-RK method of type I. The following shows that the result in \cite{Boscarino2017} can be generalized to ES-BGK model.}
\begin{thm}\label{thm 1}
{For small values of $\varepsilon$ and with $\displaystyle \Delta t^p  + \varepsilon\Delta t +  \frac{\varepsilon^2}{\Delta t}= {o}(\varepsilon)$, the IMEX-RK of type I (\ref{ES_BGK})	asymptotically becomes   a consistent macroscopic explicit-type RK scheme of order $p$ charterized by the pair $(\tilde{A}, \tilde{b})$ and $(B,\omega)$  for the CNS equations (\ref{NSeq}) with
$$
B = \tilde{A} A^{-1} \tilde{A}, \quad   \omega^\top = \tilde{b}^T A^{-1} \tilde{A}
.$$
}
\end{thm}
{Note that the pair $(\tilde{A}, \tilde{b})$ is the explicit table of the IMEX-RK method of type I in \eqref{BT}, and $A$ corresponds to the implicit part.
Moreover, in the theorem we assume that the explicit-type RK method is of order $p$. 
We remark that the order $p$ could be ensured by enforcing additional order conditions derived from the associated two pair of Butcher tableaux. Such conditions are derived in \cite{Boscarino2017} up to order $p=3$.}
\begin{proof}
We start by considering the vector notation of the IMEX-RK scheme (\ref{ES_BGK}).  By the first order discrete Chapman-Enskog expansion in $\varepsilon$ of $f^n$ and $\bF$, we get  
\begin{equation}\label{Exf_1}
f^n = \mathcal{M}[f^n] + \varepsilon f^n_1, \quad \bF = \mathcal{M}[\bF] + \varepsilon {\bff}_1,
\end{equation}
where the vector function $\bff_1$ satisfies the so-called compatibility conditions $ \langle \phi  \bff_1\rangle = 0$ in (\ref{Comp}).
Inserting this expansions (\ref{Exf_1})  
into (\ref{ES_BGK}), then multiplying by $\phi(v)$ function and integrating on $v$, we get
$$
\langle \phi  {\M}[\bF]  \rangle =   \langle \phi  \mathcal{M}[f^n]\be\rangle - \Delta t {\bf \tilde{A}}\langle\phi  L(\M[\bF])\rangle  - \varepsilon  \Delta t {\bf \tilde{A}}\nabla_x \cdot \langle v \phi \bff_1\rangle,
$$
{where ${\bf \tA}:= \tA \otimes_K I_{d_v+2}\in \mathbb{R}^{(2+d_v)s\times (2+d_v)s}$, $I_{d_v+2}$ is the $(d_v+2)\times (d_v+2)$ identity matrix, and the symbol $\otimes_K$ denotes the Kronecker product\footnote{{We use the symbol $\otimes_K$ to denote the Kronecker product: If $A$ is a $m\times n$ matrix and $B$ is a $p\times q$ matrix, then the Kronecker
		product of $A,B$ is a $mp \times nq$ matrix that can be written in block form as
		$A\otimes_K B= \begin{pmatrix}
			a_{11}B & \cdots &a_{1n}B\\
			\vdots &  &\vdots\\
			a_{m1}B &\cdots &a_{mn}B\\	
		\end{pmatrix}$}}.
Defining the flux vector $\nabla_x \cdot F(U^{(i)}) = \langle \phi  L(\mathcal{M}[F^{(i)}])\rangle$ with $U^{(i)} = \langle \phi  \mathcal{M}[F^{(i)}]\rangle$, it follows
\begin{equation}\label{fin_bis}
	\bU=   \be \otimes_K U^n - \Delta t {\bf\tilde{A}} \nabla_x\cdot  F(\bU)  - \varepsilon \Delta t {\bf \tilde{A}}\nabla_x \cdot \langle  v\phi \bff_1\rangle,
\end{equation}
where  $ \be=(1,1,...,1)^\top\in \mathbb{R}^{s}$,
\begin{align}\label{UFU def}
	\begin{split}
		{\bf U}&=\begin{pmatrix}
			U^{(1)}, U^{(2)},\cdots, U^{(s)} 
		\end{pmatrix}^\top \in \mathbb{R}^{(2+d_v)s},\\ 
		U^{(i)}&=(\rho^{(i)}, \rho^{(i)}u^{(i)}, E^{(i)})^\top\in \mathbb{R}^{2+d_v}\\
		\nabla_x\cdot F(\bU)&=\left(\nabla_x\cdot F(U^{(1)}),\nabla_x\cdot F(U^{(2)}),...,\nabla_x\cdot F(U^{(s)})\right)^\top\in \mathbb{R}^{(2+d_v)s},\\
		\nabla_x\cdot F(U^{(i)}) &= \left(
		\begin{array}{c}
			\nabla_x\cdot (\rho^{(i)} u^{(i)})\\
			\nabla_x\cdot (\rho^{(i)} u^{(i)} \otimes u^{(i)} + \rho^{(i)} T^{(i)} Id) \\
			\nabla_x\cdot ((E^{(i)} + \rho^{(i)} T^{(i)})u^{(i)})
		\end{array}
		\right)\in \mathbb{R}^{2+d_v}\,  (i=1,...,s).
	\end{split}
\end{align}}	
By Lemma \ref{Lemma1} in Appendix \ref{App1_bis}, 
we obtain 
\begin{equation}\label{DiscCL}
\bU=   {\be \otimes_K U^n} - \Delta t {\bf\tilde{A}} \nabla_x\cdot  F(\bU)  - \varepsilon \Delta t {\bf\tilde{A}} \nabla_x\cdot {\bf H}(\bU), 
\end{equation}
and, similarly, for the numerical solution we have
\begin{equation}\label{DiscCL_numSol}
U^{n+1}=   U^n - \Delta t {\bf \tilde{b}} \nabla_x\cdot  F(\bU)  - \varepsilon \Delta t {\bf \tilde{b}}  \nabla_x\cdot {\bf H}(\bU),
\end{equation}
{where ${\bf{\tb}}:= \tb^\top \otimes_K I_{d_v+2}$ and 
\begin{align*}
	\nabla_x\cdot {\bf H}(\bU)&=(\nabla_x\cdot H(U^{(1)}),\nabla_x\cdot H(U^{(2)}),...,\nabla_x\cdot H(U^{(s)}))^\top\in \mathbb{R}^{(2+d_v)s},\\
	\nabla_x\cdot H(U^{(i)}) &= \left(
	\begin{array}{c}
		0\\
		\nabla_x\cdot (\rho^{(i)} \Theta_1^{(i)}) \\
		\nabla_x\cdot ({\mathbb{Q}_1^{(i)}} + \rho^{(i)} {\Theta_1^{(i)}} { u^{(i)}})
	\end{array}
	\right)\in \mathbb{R}^{2+d_v}\,  (i=1,...,s).
\end{align*}}
Now we evaluate the quantities {$\Theta_1^{(i)}$ and $\mathbb{Q}_1^{(i)}$, for $i=1,...,s$}. To achieve this, we expand the anisotropic Gaussian $\mathcal{G}$ with respect to $\varepsilon$, i.e.,
\begin{equation}\label{Gex}
\quad \mathcal{G}[\bF] =  \mathcal{M}[\bF] + \varepsilon \bg,
\end{equation}
with vector $\bg$ defined as (\ref{gVect}). Now inserting (\ref{Exf_1}), (\ref{Gex}) into (\ref{ES_BGK}), we get
\begin{equation}\label{eq:GIMEXv1_bis} 
\M[\bF]  = \displaystyle 	\M[f^n] \be - \Delta t \tA\,L(\M[\bF]) -\varepsilon \left({\bff}_1 - f_1^n\be  + \Delta t \tA\,L(\bff_1)\right) + \Delta t A\bta (\bg-{\bff}_1).
\end{equation}
which implies
\begin{equation}\label{eq:GIMEXv1_5_I} 
\bar{\tau} (\bff_1 -\bg) = -A^{-1}\left(\displaystyle \frac{\M[\bF] - \mathcal{M}[f^n] \be}{\Delta t} + \tA\,L(\M[\bF])\right)  + 
\mathcal{O}\left(\frac{\varepsilon}{\Delta t} \right).
\end{equation} 
Now, applying only the explicit part of IMEX-RK scheme to \eqref{M_lim}, we get
\begin{equation}\label{RR_2_CHO}
\frac{\M[{\bG}] - \mathcal{\M}[f^n] \be}{\Delta t} + \tA\,\left(L(\M[\bG]) \right) =\tilde{A} {diag\left(\mathcal{M}[\bG]\right)}\left(A({\textbf{V}}):\frac{\sigma(\bu)}{2} + 2B({\textbf{V}})\cdot \nabla_x \sqrt{\bT}\right)+\mathcal{O}(\varepsilon),
\end{equation}
{where $\bG$ denotes the vector of internal stage for the equation \eqref{M_lim} and
\begin{align*}
	\begin{split}
		\bu &= (u^{(1)}, ...,u^{(s)})^\top\in \mathbb{R}^{d_vs},  \bT = (T^{(1)}, ...,T^{(s)})^\top\in \mathbb{R}^{s},\\ \textbf{V} &= (V^{(1)},V^{(2)},...,V^{(s)})^\top\in \mathbb{R}^{d_vs} \quad V^{(i)}=\frac{v-u^{(i)}}{\sqrt{T^{(i)}}}\in \mathbb{R}^{d_v},\\
		&A(\textbf{V}):\frac{\sigma(\bu)}{2} \in \mathbb{R}^{s},\quad \left(A(\textbf{V}):\frac{\sigma(\bu)}{2}\right)_i = \left(V^{(i)} \otimes V^{(i)} - \frac{1}{d_v}|V^{(i)}|^2 Id\right) : \sigma(u^{(i)})\\ 
		&B(\textbf{V})\cdot \nabla_x \sqrt{\bT}\in \mathbb{R}^{s},\quad \left(B(\textbf{V})\cdot \nabla_x \sqrt{\bT}\right)_i = \frac{1}{2}V^{(i)}(|V^{(i)}|^2 - (d_v+2)) \cdot \nabla_x \sqrt{\bT^{(i)}}, \quad i=1,...,s, 
	\end{split}
\end{align*}}
Integrating both sides of \eqref{RR_2_CHO} macroscopic variables are associated to $\bG$ are obtained as follows: 
{\begin{align}
	\begin{split}    
		\label{DiscCL_g}
		\bU_\bG&=   {\be \otimes_K U^n} - \Delta t {\bf \tilde{A}} \nabla_x\cdot F(\bU_\bG)  +\mathcal{O}(\varepsilon \Delta t),
	\end{split}
\end{align}where $\bU_\bG$ is the macroscopic quantities associated to the stage values. Since the first stage values of $\bF$ and $\bG$ are the same $f^n$, $\bU$ and $\bU_G$ are obtained from the same $\bU^n$ and this implies that $\|\bU-\bU_\bG\|_\infty=\mathcal{O}(\varepsilon\Delta t)$. Then, by 
Lemma \ref{lem discre} we have 
$\mathcal{M}[\bG]=\mathcal{M}[\bF]+\mathcal{O}(\varepsilon\Delta t)$. }
Inserting this into \eqref{eq:GIMEXv1_5_I}, we get
\begin{align}\label{substitution thm 1}
\begin{split}
	\bar{\tau} (\bff_1 -\bg) &= -A^{-1}\left(\displaystyle \frac{\M[\bG] - \mathcal{M}[f^n] \be}{\Delta t} + \tA\,L(\M[\bG]) +\mathcal{O}(\varepsilon) +\mathcal{O}(\varepsilon\Delta t)\right)  + 
	\mathcal{O}\left(\frac{\varepsilon}{\Delta t} \right)\\
	&= -A^{-1}\left(\tilde{A}{diag\left(\mathcal{M}[\bG]\right)}\left(A({\textbf{V}}):\frac{\sigma(\bu)}{2} + 2B({\textbf{V}})\cdot \nabla_x \sqrt{\bT}\right)+\mathcal{O}(\varepsilon)\right)  + 
	\mathcal{O}\left(\frac{\varepsilon}{\Delta t} \right)+\mathcal{O}(\varepsilon\Delta t)
\end{split}
\end{align}
where the second line comes from \eqref{RR_2_CHO}. Now, we again use $\mathcal{M}[\bG]=\mathcal{M}[\bF]+\mathcal{O}(\varepsilon\Delta t)$ to derive
\begin{align*}
\bar{\tau} (\bff_1 -\bg)&= -A^{-1}\left(\tilde{A}{diag\left(\mathcal{M}[\bF]\right)}\left(A({\textbf{V}}):\frac{\sigma(\bu)}{2} + 2B({\textbf{V}})\cdot \nabla_x \sqrt{\bT}\right)\right)  + 
\mathcal{O}\left(\frac{\varepsilon}{\Delta t} \right)+\mathcal{O}(\varepsilon)+\mathcal{O}(\varepsilon\Delta t)
\end{align*} 
Then, we have
\begin{equation}\label{eq:GIMEXv1_4}
\bff_1 =\bg -\bta^{-1}A^{-1} \left( \tilde{A}{diag\left(\mathcal{M}[\bF]\right)}\left(A({\textbf{V}}):\frac{\sigma(\bu)}{2} + 2B({\textbf{V}})\cdot \nabla_x \sqrt{\bT}\right)\right)  + \mathcal{O}\left(\frac{\varepsilon}{\Delta t} \right) + \mathcal{O}(\varepsilon).
\end{equation} 

{Multiplying by $v\otimes v$ or $v|v|^2/2$ both side in (\ref{eq:GIMEXv1_4}) and taking the integration over $v$, we get from (\ref{Hu}), (\ref{Gu}) and (\ref{Fu}) that
\begin{align*}
	\begin{split}
		{\rho^{(i)}\Theta_1^{(i)}} &= \nu{\rho^{(i)}\Theta_1^{(i)}}  -   {{ \frac{1}{\tau^{(i)}} \sum_{j=1}^{s}(A^{-1}\tilde{A})_{ij}}} \rho^{(j)} T^{(j)}\sigma(u^{(j)}) +\mathcal{O}\left(\frac{\varepsilon}{\Delta t} \right),\quad i=1,...,s\\
		\mathbb{Q}_1^{(i)} + \rho^{(i)}\Theta_1^{(i)} u^{(i)} &= \nu{\rho^{(i)}\Theta_1^{(i)}}u^{(i)}  -   {{ \frac{1}{\tau^{(i)}} \sum_{j=1}^{s}(A^{-1}\tilde{A})_{ij}}} \left(\rho^{(j)} T^{(j)}\sigma(u^{(j)})u^{(j)} + \frac{d_v+2}{2}\rho^{(j)}T^{(j)}\nabla_x T^{(j)}\right) +\mathcal{O}\left(\frac{\varepsilon}{\Delta t} \right).
	\end{split}
\end{align*}
Thus, the approximations of stress tensor and heat flux are given by 
\begin{align*}
	\rho{\bf \Theta_1} &= \begin{pmatrix}
		\rho^{(1)}{ \Theta_1^{(1)}},
		\rho^{(2)}{ \Theta_1^{(2)}},
		\cdots,
		\rho^{(s)}{ \Theta_1^{(s)}}		
	\end{pmatrix}^\top\\ 
	&= -\frac{1}{1-\nu}\left( \bta^{-1}A^{-1}\tilde{A} \, diag(\bar{p}) \otimes_K Id\right)\times \sigma(\bu) +\mathcal{O}\left(\frac{\varepsilon}{\Delta t} \right)\\
	&= -\left( \bar{\mu} \otimes_K Id\right)\times \sigma(\bu) +\mathcal{O}\left(\frac{\varepsilon}{\Delta t} \right)
\end{align*}
\begin{align*}
	{\bf q}= \mathbb{Q}_1  &= \begin{pmatrix}
		\mathbb{Q}_1^{(1)},
		\mathbb{Q}_1^{(2)},
		\cdots ,
		\mathbb{Q}_1^{(s)}		
	\end{pmatrix}^\top\\ 
&= -\frac{d_v+2}{2}\left( \bta^{-1}A^{-1}\tilde{A}  \, diag(\bar{p}) \otimes_K Id\right)\times \nabla_x \bT +\mathcal{O}\left(\frac{\varepsilon}{\Delta t} \right)\\
&= -\left(\bar{\kappa} \otimes_K Id\right) \times \nabla_x \bT +  \mathcal{O}\left(\frac{\varepsilon}{\Delta t} \right)
\end{align*}
where $\sigma(u^{(i)}) = \nabla_x u^{(i)} + (\nabla_x u^{(i)})^\top - \frac{2}{d_v}\nabla_x \cdot u^{(i)} Id$, and $\bar{p}=(p^{(1)}, p^{(2)},...,p^{(s)})^\top$ with $p^{(i)}=\rho^{(i)}T^{(i)}$.
Here we defined the viscosity and thermal conductivity $s\times s$ matrices ${\bar \mu}=(\mu_{ij})$ and ${\bar \kappa}=(\kappa_{ij})$ as
\begin{equation}\label{mu_k}
{\bar \mu} = \frac{1}{(1-\nu)}\bta^{-1} (A^{-1}\tilde{A})\,diag(\bar{p}), \quad
{\bar\kappa} =\frac{d_v + 2}{2} \, \bta^{-1} (A^{-1}\tilde{A})\, diag(\bar{p}).
\end{equation}
This implies that
\[
\bta^{-1}(A^{-1}\tilde{A})\,diag(\bar{p})= (1-\nu){\bar \mu}   = 
\frac{2}{d_v + 2}{\bar\kappa} 
\]
and hence
\[
\frac{d_v+2}{2}\frac{\mu_{ij}}{\kappa_{ij}}=\frac{1}{1-\nu},\quad 1\leq i,j\leq s.
\]
This form is consistent to the Prandtl number of the ES-BGK model.
}
To sum up, the scheme reads \begin{equation}\label{DiscCL_bis_one}
\bU=   \be \otimes_K U^n - \Delta t {\bf\tilde{A}} \nabla_x\cdot F(\bU)  + \varepsilon \Delta t {\bf \tilde{A}} \nabla_x\cdot \underline{S}(\bU) + \mathcal{O}(\varepsilon^2), 
\end{equation}
and 
\begin{equation}\label{DiscCL_numSol_bis}
U^{n+1}=   U^n - \Delta t {\bf\tilde{b}} \nabla_x\cdot F(\bU)  + \varepsilon \Delta t {\bf \tilde{b}}  \nabla_x\cdot\underline{S}(\bU)+  \mathcal{O}(\varepsilon^2),
\end{equation}
for the numerical solution, where
{
\begin{align*}
\begin{split}
	\nabla_x\cdot \underline{S}(\bU) &=\left(\nabla_x\cdot \underline{S}(U^{(1)}), \nabla_x\cdot \underline{S}(U^{(2)}),...,\nabla_x\cdot \underline{S}(U^{(s)})
	\right)^\top \in \mathbb{R}^{(d_v+2)s},\\ 
	\nabla_x\cdot \underline{S}(U^{(i)}) &=\left(
	\begin{array}{c}
		\nabla_x \cdot{\bf 0},\\
		\nabla_x \cdot\left(\sum_{j=1}^{i-1}\bar{\mu}_{ij} \sigma(u^{(j)})\right),\\
		\nabla_x \cdot\left(\sum_{j=1}^{i-1} \bar{\mu}_{ij}\sigma(u^{(j)}) u^{(j)} + {\bar{\kappa}_{ij} \nabla_x T^{(j)}}\right)
	\end{array}
	\right)\\
	&=\left(
	\begin{array}{c}
		\nabla_x \cdot{\bf 0},\\
		\nabla_x \cdot\left(\frac{1}{\tau^{(i)}}\sum_{j=1}^{i-1}\frac{1}{(1-\nu)} (A^{-1}\tilde{A})_{ij}p^{(j)} \sigma(u^{(j)})\right),\\
		\nabla_x \cdot\left(\frac{1}{\tau^{(i)}}\sum_{j=1}^{i-1} \frac{1}{(1-\nu)} (A^{-1}\tilde{A})_{ij}p^{(j)}\sigma(u^{(j)}) u^{(j)} + {\frac{d_v+2}{2} (A^{-1}\tilde{A})_{ij}p^{(j)} \nabla_x T^{(j)}}\right)
	\end{array}
	\right)\in \mathbb{R}^{d_v+2}.
\end{split}
\end{align*}
For each $i,j=1,...,s$, we now consider the approximation $\tau^{(i)}=\tau^{(j)} + \mathcal{O}(\Delta t)+\mathcal{O}(\varepsilon^2)$. Inserting this approximation into the $\underline{S}(\bU)$ in \eqref{DiscCL_bis_one} and \eqref{DiscCL_numSol_bis}, we obtain
\begin{align*}
\begin{split}
	\bU&=   \be \otimes_K U^n - \Delta t {\bf\tilde{A}} \nabla_x\cdot F(\bU)  + \varepsilon \Delta t {\bf \tilde{A}} \nabla_x\cdot \underline{\underline{S}}(\bU) + \mathcal{O}(\varepsilon^2)+  \mathcal{O}(\varepsilon \Delta t^2),\\ 
	U^{n+1}&=   U^n - \Delta t {\bf\tilde{b}} \nabla_x\cdot F(\bU)  + \varepsilon \Delta t {\bf \tilde{b}}  \nabla_x\cdot\underline{\underline{S}}(\bU)+  \mathcal{O}(\varepsilon^2) +  \mathcal{O}(\varepsilon \Delta t^2),
\end{split}
\end{align*}
where
\begin{align*}
\begin{split}
	\nabla_x\cdot \underline{\underline{S}}(\bU) &=\left(\nabla_x\cdot \underline{\underline{S}}(U^{(1)}), \nabla_x\cdot \underline{\underline{S}}(U^{(2)}),...,\nabla_x\cdot \underline{\underline{S}}(U^{(s)})
	\right)^\top \in \mathbb{R}^{(d_v+2)s},\\ 
	\nabla_x\cdot \underline{\underline{S}}(U^{(i)}) 
	&=\left(
	\begin{array}{c}
		\nabla_x \cdot{\bf 0},\\
		\nabla_x \cdot\left(\sum_{j=1}^{s}\frac{1}{(1-\nu)\tau^{(j)}} (A^{-1}\tilde{A})_{ij}p^{(j)} \sigma(u^{(j)})\right),\\
		\nabla_x \cdot\left(\sum_{j=1}^{s} \frac{1}{(1-\nu)\tau^{(j)}} (A^{-1}\tilde{A})_{ij}p^{(j)}\sigma(u^{(j)}) u^{(j)} + {\frac{d_v+2}{2\tau^{(j)}} (A^{-1}\tilde{A})_{ij}p^{(j)} \nabla_x T^{(j)}}\right)
	\end{array}
	\right)\\
	&=\left(
	\begin{array}{c}
		\nabla_x \cdot{\bf 0},\\
		\sum_{j=1}^{s} (A^{-1}\tilde{A})_{ij} \nabla_x \cdot\left(\mu^{(j)} \sigma(u^{(j)})\right),\\
		\sum_{j=1}^{s} (A^{-1}\tilde{A})_{ij} \nabla_x \cdot\left( \mu^{(j)}\sigma(u^{(j)}) u^{(j)} + \kappa^{(j)} \nabla_x T^{(j)}\right)
	\end{array}
	\right)\in \mathbb{R}^{d_v+2}\\
\end{split}
\end{align*}
with $\mu^{(j)} = \frac{1}{(1-\nu)\tau^{(j)}} p^{(j)}$, $\kappa^{(j)}=\frac{d_v+2}{2\tau^{(j)}} p^{(j)}$. Defining
\begin{align*}
\begin{split}
	\nabla_x\cdot S(\bU) &=\left(\nabla_x\cdot S(U^{(1)}), \nabla_x\cdot S(U^{(2)}),...,\nabla_x\cdot S(U^{(s)})
	\right)^\top \in \mathbb{R}^{(d_v+2)s},\\ 
	\nabla_x\cdot S(U^{(i)}) 
	&=\left(
	\begin{array}{c}
		\nabla_x \cdot{\bf 0},\\
		\nabla_x \cdot\left(\mu^{(i)} \sigma(u^{(i)})\right),\\
		\nabla_x \cdot\left( \mu^{(i)}\sigma(u^{(i)}) u^{(i)} + \kappa^{(i)} \nabla_x T^{(i)}\right)
	\end{array}
	\right)\in \mathbb{R}^{d_v+2},
\end{split}
\end{align*}
we finally derive
\begin{align}\label{final typeI}
\begin{split}
	\bU&=   \be \otimes_K U^n - \Delta t {\bf \tilde{A}} \nabla_x\cdot F(\bU)  + \varepsilon \Delta t  {\bf B} \nabla_x\cdot  S(\bU) + \mathcal{O}(\varepsilon^2)+  \mathcal{O}(\varepsilon \Delta t^2),\\ 
	U^{n+1}&=   U^n - \Delta t {\bf \tilde{b}} \nabla_x\cdot F(\bU)  + \varepsilon \Delta t {\boldsymbol \omega} \nabla_x\cdot S(\bU)+  \mathcal{O}(\varepsilon^2) +  \mathcal{O}(\varepsilon \Delta t^2),
\end{split}
\end{align}
where ${\bf B}= (\tilde{A}A^{-1}\tilde{A}) \otimes_K I_{d_v+2}$ and ${\boldsymbol \omega}= (\tilde{b}^\top A^{-1}\tilde{A}) \otimes_K I_{d_v+2}$.
}
Next, we consider the following explicit-type RK method of order $p$ based on the coefficients matrices $\tilde{A}, B$ and the weights $\tilde{b}, w$ with $
{B} = \tilde{A}A^{-1}\tilde{A},\, \omega^\top =  \tilde{b}^\top A^{-1}\tilde{A}$ applied to the CNS equations (\ref{NSeq}):
\begin{align*}
\begin{split}
\bU&=   \be \otimes_K U^n - \Delta t {\bf \tilde{A}} \nabla_x\cdot F(\bU)  + \varepsilon \Delta t {\bf B} \nabla_x\cdot { S}(\bU), \\[2mm]
U^{n+1}&=   U^n - \Delta t {\bf \tilde{b}} \nabla_x\cdot {\bf F}(\bU)  + \varepsilon \Delta t {\boldsymbol\omega} \nabla_x\cdot { S}(\bU).
\end{split}
\end{align*}
$$
$$
{Assuming  that $u(t)$ is the true solution of the CNS}, then the local truncation error is $\mathcal{O}(\Delta t^p)$, i.e., 
\begin{align}\label{temp1}
\begin{split}
\mathcal{O}(\Delta t^p)  &= \frac{u(t_{n+1})-u(t_n)}{\Delta t} + \sum_{i = 1}^s \tilde{b}_i \nabla_x\cdot F(u^{(i)})   - \sum_{i = 1}^s \omega_i  \varepsilon  \nabla_x\cdot S(u^{(i)})
\end{split}
\end{align}
where $p$ is the order of the explicit scheme with 
$$
{u}^{(i)} = u(t_n) - \Delta t\sum_{j = 1}^{{i-1}} {\tilde{a}_{ij}}\nabla_x \cdot F(u^{(j)}) - \Delta t\sum_{j = 1}^{{i-1}} {{B}_{ij}}   \varepsilon\nabla_x\cdot S(u^{(j)}),
$$
where $B=(B_{ij})$.
Now, we consider the local trunction of our method \eqref{final typeI}:
\begin{align}
\begin{split}
L.T.E. &= \frac{u(t_{n+1})-u(t_n)}{\Delta t} + \sum_{i = 1}^s \tilde{b}_i \left( \nabla_x F(u^{(i)}) \right) - \sum_{i = 1}^s \omega_i \left(  \varepsilon  \nabla_x S(u^{(i)})\right) + \mathcal{O}\left(\frac{\varepsilon^2}{\Delta t}\right) + \mathcal{O}\left(\varepsilon\Delta t\right).
\end{split}
\end{align}
where $\omega=(\omega_1,\omega_2,...,\omega_s)^\top$. This combined with \eqref{temp1}, gives
\begin{equation}\label{LTE}
L.T.E.= \mathcal{O}(\Delta t^p) + \mathcal{O}\left(\frac{\varepsilon^2}{\Delta t}\right)+ \mathcal{O}\left(\varepsilon\Delta t\right).
\end{equation}

\end{proof}

{Theorem \ref{thm 1} implies that, to accurately capture the Navier-Stokes equation, it is necessary that the local truncation error satisfies $L.T.E = o(\varepsilon)$, which holds true if $\displaystyle \Delta t^p + \varepsilon\Delta t +  \frac{\varepsilon^2}{\Delta t}= {o}(\varepsilon)$.}

\begin{cor}\label{tau1}
{Setting $\tau = 1$ in (\ref{ES-BGK}) the macroscopic explicit-type RK scheme has a local truncation error: $L.T.E. = \mathcal{O}(\Delta t^p) +\mathcal{O}(\varepsilon^2/\Delta t).$ }
\end{cor}

{Similarly, analogous considerations can be made for type II by introducing a new definition of the matrix $B$ and weights $w$ (for details, see paper \cite{Boscarino2017}). However, in practice, $\tau$ is closely related to the viscosity and heat conductivity of gases, and hence the form of $\tau$ should be set carefully (see for example \cite{Golse, Mieussens, JinFilbet, DiPa}).}\\

{\noindent$\bullet$ {\bf IMEX-RK scheme of type II}. In this section  we analyze the asymptotic behavior of type II IMEX-RK schemes. 
As type I scheme, at this stage, we do not need to assume that the scheme satisfies the GSA condition.
}

\begin{thm}\label{thm 2}
{For small values of $\varepsilon$ and with $\displaystyle \Delta t^p  + \varepsilon\Delta t +  \frac{\varepsilon^2}{\Delta t}= {o}(\varepsilon)$, the IMEX-RK of type II (\ref{ES_BGK2_CK}) satisfying 
\begin{equation}\label{22CK}
\h{\tilde{A}}\h{A}^{-1}a=\tilde{a},\quad \h{\tilde{b}}^\top \h{A}^{-1}a=\tilde{b}_1,
\end{equation} 
with initial data $f^n=\mathcal{M}[f^n] + \varepsilon f_{1}^n$, where
\begin{align}\label{assumption thm 2}
f_1^n = g^n - \frac{1}{\tau^{n}}\mathcal{M}[f^n]\left(A(V^n):\sigma(u^n) + 2B(V^n)\cdot \nabla_x \sqrt{T^n}\right) + \mathcal{O}(\varepsilon),
\end{align} 
$V^n=\frac{v-u^n}{\sqrt{T^n}}$, asymptotically becomes a consistent 
macroscopic explicit-type RK scheme of order $p$ charterized by the pair $(\tilde{A}, \tilde{b})$ and $(B, \omega)$  for the CNS equations (\ref{NSeq}) with
\begin{align}\label{TYPE_II}
\begin{split}
B&=\left(\begin{array}{cc}
	0 & 0\\
	\mathbb{\bf b}_1  &  \hat{B}
\end{array}\right),\quad \omega^\top=(\omega_1,\h{\omega}^\top),\\
\mathbb{\bf b}_1  = {\h{\tilde{A}}\h{A}^{-1}\tilde{a}},\quad \hat{B} &= \h{\tilde{A}}\h{A}^{-1}{\h{\tilde{A}}},\quad \omega_1={\h{\tilde{b}}^\top \h{A}^{-1}\tilde{a}},\quad \h{\omega}^\top = {\h{\tilde{b}}}^\top \h{A}^{-1} \h{\tilde{A}}.
\end{split}
\end{align}
}
\end{thm}
{As in the case of Theorem \ref{thm 1}, we remark that the order $p$ could be ensured by enforcing additional order conditions derived up to order $3$ for type II from the associated two pair of Butcher tableaux in \cite{Boscarino2017}.}
\begin{proof}
The IMEX RK scheme of type II applied to (\ref{ES-BGK}) can be
written in compact form {\eqref{ES_BGK2_CK}} as 
\begin{align}\label{eq:GIMEXv1CK} 
F^{(1)} = f^n, \quad \h \bF = \displaystyle f^{n} \h \be - \Delta t \ta\,L(f^n) - \Delta t \h\tA\,L(\h\bF)+\frac{\Delta t}{\varepsilon} a\tau^n (\mathcal{G}[f^n] - f^n) + \frac{\Delta t}{\varepsilon} \h A {\h\bta} (\mathcal{G}[\h\bF] - \h\bF),
\end{align}
\begin{equation*}\label{eq:GIMEXv2CK}
f^{n+1} = \displaystyle f^{n} -\Delta t
\tb_1 L(f^n) - \Delta t
\h \tb^{T}L(\h \bF)
+\frac{\Delta t}{\varepsilon}b_1(\mathcal{G}[f^n] - f^n)
+\frac{\Delta t}{\varepsilon}\h b^{T}(\mathcal{G}[\h \bF] - \h \bF),
\end{equation*}
Now inserting expansions 
\begin{equation}\label{Exf_1_CK}
f^n = \mathcal{M}[f^n] + \varepsilon f^n_1, \quad \h \bF = \mathcal{M}[\h \bF] + \varepsilon \h{\bff}_1, \quad \mathcal{G}[f^n] =  \mathcal{M}[f^n] + \varepsilon g^n,\quad \mathcal{G}[\h \bF] =  \mathcal{M}[\h \bF] + \varepsilon \h\bg,
\end{equation}
into (\ref{eq:GIMEXv1CK}), multiplying by $\phi(v)$ function and integrating on $v$, we get
{
\begin{align}\label{fin_bis_CK_tris22}
\begin{split}
U^{(1)} &= U^n,\\[2mm]
\h \bU &=  \hat{\be} \otimes_K  U^n   - \Delta t  \left(\tilde{a} \otimes_K \nabla_x \cdot F(U^n)  +  {\bf \h \tA} \nabla_x\cdot  F(\h \bU)\right)
- \varepsilon\Delta t  \left(\tilde{a} \otimes_K   \langle  \phi L(f_1^n) \rangle +  {\bf \h \tA}  \langle  \phi L(\h \bff_1)\rangle \right),\\[2mm]
U^{n+1}  &=   U^n - \Delta t  \left(\tilde{b}_1 \nabla_x\cdot  F(U^n) +  {\bf \h \tb}  \nabla_x\cdot F(\h \bU)\right)  
- \varepsilon\Delta t  \left(\tilde{b}_1   \langle \phi L(f_1^n) \rangle +  {\bf \h \tb}  \langle  \phi L(\h \bff_1)\rangle \right).
\end{split}
\end{align}
}
{Here ${\bf \h{\tA}}:= \h{\tA} \otimes_K I_{d_v+2}$, ${\bf\h{\tb}}:= \h{\tb}^\top \otimes_K I_{d_v+2}$, and we define $\nabla_x\cdot F(U^n)=\langle \phi  L(\mathcal{M}[f^n])\rangle$ and $\nabla_x\cdot F(\h \bU) = \langle \phi  L(\mathcal{M}[\h \bF])\rangle$, i.e.,
\begin{align}\label{UFU def2}
\begin{split}
\h{\bf U}&=\begin{pmatrix}
	U^{(2)}, U^{(3)},\cdots, U^{(s)} 
\end{pmatrix}^\top \in \mathbb{R}^{(2+d_v)(s-1)},\\ 
U^{(i)}&=(\rho^{(i)}, \rho^{(i)}u^{(i)}, E^{(i)})^\top\in \mathbb{R}^{2+d_v}\\
\nabla_x\cdot F(\h\bU)&=\left(\nabla_x\cdot F(U^{(2)}),\nabla_x\cdot F(U^{(3)}),...,\nabla_x\cdot F(U^{(s)})\right)^\top\in \mathbb{R}^{(2+d_v)(s-1)},\\
\nabla_x\cdot F(U^{(i)}) &= \left(
\begin{array}{c}
	\nabla_x\cdot (\rho^{(i)} u^{(i)})\\
	\nabla_x\cdot (\rho^{(i)} u^{(i)} \otimes u^{(i)} + \rho^{(i)} T^{(i)} Id) \\
	\nabla_x\cdot ((E^{(i)} + \rho^{(i)} T^{(i)})u^{(i)})
\end{array}
\right)\in \mathbb{R}^{2+d_v}\,  (i=1,...,s).
\end{split}
\end{align}}
Then, by Lemma \ref{Lemma1} we obtain
{\begin{align}\label{fin_bis_CK_tris22_bis}
\begin{split}
U^{(1)} &= U^n,\\[2mm]
\h \bU &=   \hat{\be}\otimes_K U^n  - \Delta t  \left(\tilde{a}\otimes_K \nabla_x\cdot F(U^n)  +  {\bf \h \tA} \nabla_x\cdot F(\h \bU)\right)
- \varepsilon\Delta t  \left(\tilde{a} \otimes_K \nabla_x\cdot H(U^n) +  {\bf \h \tA} \nabla_x\cdot H(\h \bU)\right),\\[2mm]
U^{n+1}  &=   U^n - \Delta t  \left(\tilde{b}_1 \nabla_x\cdot F(U^n) +  {\bf \h \tb}  \nabla_x\cdot F(\h \bU)\right)  
- \varepsilon\Delta t  \left(\tilde{b}_1 \nabla_x\cdot  H(U^n) +  {\bf \h \tb} \nabla_x\cdot H(\h \bU) \right),
\end{split}
\end{align}
where
\begin{align*}
\nabla_x\cdot {\bf H}(\h\bU)&=(\nabla_x\cdot H(U^{(2)}),\nabla_x\cdot H(U^{(3)}),...,\nabla_x\cdot H(U^{(s)}))^\top\in \mathbb{R}^{(2+d_v)(s-1)},\\
\nabla_x\cdot H(U^{(i)}) &= \left(
\begin{array}{c}
0\\
\nabla_x\cdot (\rho^{(i)} \Theta_1^{(i)}) \\
\nabla_x\cdot ({\mathbb{Q}_1^{(i)}} + \rho^{(i)} {\Theta_1^{(i)}} { u^{(i)}})
\end{array}
\right)\in \mathbb{R}^{2+d_v},\quad  i=1,...,s.
\end{align*}
}
Now we evaluate $\h \bff_1$ in (\ref{fin_bis_CK_tris22}). { Substituting (\ref{Exf_1_CK}) into (\ref{eq:GIMEXv1CK}), we obtain}
\begin{align}\label{eq:GIMEXv1_bis_CK} 
\begin{split}
\M[\h\bF]  = \displaystyle 	\M[f^n] \hat{\be} &- \Delta t  \left( \tilde{a}L(\M[f^n]) + \h \tA\,L(\M[\h \bF]) \right) \\
& -\varepsilon \left({\h\bff}_1 - f_1^n\be  + \Delta t\left( \tilde{a}L(f_1^n) + \h \tA\,L(\h \bff_1)\right)\right)\\
& +\Delta t \left( a \tau^n (g^n - f_1^n) + \h A \h \bta (\h\bg-\h{\bff}_1)\right).
\end{split}
\end{align}
Following the same argument for type I in \eqref{RR_2_CHO}-\eqref{substitution thm 1}, 
{we have}
\begin{align}\label{eq:GIMEXv1_4_CK}
\begin{split}
\h A \h \bta (\h \bff_1-\h \bg ) &={- \tilde{a} \left(\mathcal{M}[f^n]\left(A({V^n}):\frac{\sigma(u^n)}{2} + 2B({V^n})\cdot \nabla_x \sqrt{T^n}\right)\right)} \\ 
&- \h\tA\left(\mathcal{M}[\h \bF]\left(A(\h V):\frac{\sigma(\h \bu)}{2} + 2B(\h V)\cdot \nabla_x \sqrt{\h \bT}\right)\right)  \\
&-a \tau^n ( f_1^n - g^n ) + \mathcal{O}\left(\frac{\varepsilon}{\Delta t} \right) + \mathcal{O}(\varepsilon),
\end{split}
\end{align} 
where $\bu^\top = (u^{(1)},\h \bu^\top)$,  $\bT^\top = (T^{(1)},\h \bT^\top)$ with $\h \bu^\top\in \mathbb{R}^{d_v (s-1)}$, $\h \bT^\top \in \mathbb{R}^{s-1}$ and 
{
\begin{align*}
\ds {\textbf{V}}^\top = (V^{(1)},\h V^\top),\quad		{\h \bV} = (V^{(2)},V^{(2)},...,V^{(s)})^\top\in \mathbb{R}^{d_v(s-1)} \quad V^{(i)}=\frac{v-u^{(i)}}{\sqrt{T^{(i)}}}\in \mathbb{R}^{d_v},
\end{align*}}
with $u^{(1)} = u^n$, $T^{(1)}=T^n$ and $V^{(1)} = V^{n}$.
Now, at the initial step $n$, by the assumption \eqref{assumption thm 2}
we have 
\begin{equation*}
\tau^n (f_1^n - g^n) = -\mathcal{M}[f^n]\left(A(V^n):\frac{\sigma(u^n)}{2} + 2B(V^n)\cdot \nabla_x \sqrt{T^n}\right) + \mathcal{O}(\varepsilon),
\end{equation*} 
and substituting this  into (\ref{eq:GIMEXv1_4_CK}), we get
\begin{align}\label{eq:GIMEXv1_4_CK_bis}
\begin{split}
\h \bff_1 &=  \h \bg  - \h \bta^{-1}\h A^{-1}(\tilde{a}-a) \left(\mathcal{M}[f^n]\left(A({V^n}):\frac{\sigma(u^n)}{2} + 2B({V^n})\cdot \nabla_x \sqrt{T^n}\right)\right) \\ 
&- \h \bta^{-1} \h A^{-1}\h\tA\left(\mathcal{M}[\h \bF]\left(A(\h V):\frac{\sigma(\h \bu)}{2} + 2B(\h V)\cdot \nabla_x \sqrt{\h \bT}\right)\right)  + \mathcal{O}\left(\frac{\varepsilon}{\Delta t} \right) + \mathcal{O}(\varepsilon).
\end{split}
\end{align}
{Recall that $F^{(1)}=f^n$, $U^{(1)}=U^n$, we set  $g^{(1)}=g^n$, $f_1^{(1)}=f_1^n$. Now, we multiplying by $v\otimes v$ or $v|v|^2/2$ both side in (\ref{eq:GIMEXv1_4_CK_bis}), and take the integration over $v$, which together with (\ref{Hu}), (\ref{Gu}), (\ref{Fu}) gives
\begin{align}
\begin{split}
	{\rho^{(1)}\Theta_1^{(1)}} &= \nu{\rho^{(1)}\Theta_1^{(1)}}   - \frac{1}{\tau^{n}} \rho^{n} T^{n}\sigma(u^{n}) +\mathcal{O}\left({\varepsilon} \right),\\
	\mathbb{Q}_1^{(1)} + \rho^{(1)}\Theta_1^{(1)} u^{(1)} &= \nu{\rho^{(1)}\Theta_1^{(1)}}u^{(1)}  -   { \frac{1}{\tau^{n}} } \left(\rho^{n} T^{n}\sigma(u^{n})u^{n} + \frac{d_v+2}{2}\rho^{n}T^{n}\nabla_x T^{n}\right) +\mathcal{O}\left({\varepsilon} \right),
\end{split}
\end{align}
and
\begin{align*}
\begin{split}
	{\rho^{(i)}\Theta_1^{(i)}} &= \nu{\rho^{(i)}\Theta_1^{(i)}}  - \frac{1}{\tau^{(i)}} (\h{A}^{-1}(\tilde{a}-a))_{i-1} \rho^{n} T^{n}\sigma(u^{n})\\
	&- {{ \frac{1}{\tau^{(i)}} \sum_{j=1}^{i-2}(\h{A}^{-1}\h{\tilde{A}})_{ij}}} \rho^{(j+1)} T^{(j+1)}\sigma(u^{(j+1)}) +\mathcal{O}\left(\frac{\varepsilon}{\Delta t} \right),\\
	\mathbb{Q}_1^{(i)} + \rho^{(i)}\Theta_1^{(i)} u^{(i)} &= \nu{\rho^{(i)}\Theta_1^{(i)}}u^{(i)}  -   { \frac{1}{\tau^{(i)}} (\h{A}^{-1}(\tilde{a}-a))_{i-1}} \left(\rho^{n} T^{n}\sigma(u^{n})u^{n} + \frac{d_v+2}{2}\rho^{n}T^{n}\nabla_x T^{n}\right)\\
	&-   { \frac{1}{\tau^{(i)}} \sum_{j=1}^{i-2}(\h{A}^{-1}\h{\tilde{A}})_{ij}} \left(\rho^{(j+1)} T^{(j+1)}\sigma(u^{(j+1)})u^{(j+1)} + \frac{d_v+2}{2}\rho^{(j+1)}T^{(j+1)}\nabla_x T^{(j+1)}\right) +\mathcal{O}\left(\frac{\varepsilon}{\Delta t} \right), \quad i=2,...,s
\end{split}
\end{align*}
with $\h{A}^{-1}(\tilde{a}-a) \in \mathbb{R}^{s-1}$.  
Thus, the approximations of stress tensor and heat flux are given by 
\begin{align*}
\rho^{(1)}{ \Theta_1^{(1)}} &= \frac{1}{(1-\nu)\tau^{n}} \rho^{n} T^{n}\sigma(u^{n}) +\mathcal{O}\left({\varepsilon}\right),\quad \mathbb{Q}_1^{(1)}=\frac{d_v+2}{2\tau^{n}} \rho^{n}T^{n}\nabla_x T^{n} +\mathcal{O}\left({\varepsilon}\right),\\
\rho{ \bf \h\Theta_1}
&= \begin{pmatrix}
	\rho^{(2)}{ \Theta_1^{(2)}},
	\rho^{(3)}{ \Theta_1^{(3)}},
	\cdots,
	\rho^{(s)}{ \Theta_1^{(s)}}		
\end{pmatrix}^\top,\\
&=-\frac{1}{1-\nu} \left(\h \bta^{-1} \h A^{-1}(\ta -a)p^n\right) \otimes_K \sigma( U^{n}) -\frac{1}{1-\nu}\left( \h \bta^{-1}\h A^{-1}\h{\tilde{A}} \, diag(\h{\bar{p}}) \otimes_K Id\right)\times \sigma(\h\bu) +\mathcal{O}\left(\frac{\varepsilon}{\Delta t} \right)\\
&= - \bar{\mu}^{(1)}\otimes_K \sigma(U^n) -\left( \h{\bar{\mu}} \otimes_K Id\right)\times \sigma({\h\bu}) +\mathcal{O}\left(\frac{\varepsilon}{\Delta t} \right)\\
{\bf q}= \mathbb{Q}_1  &= \begin{pmatrix}
	\mathbb{Q}_1^{(2)},
	\mathbb{Q}_1^{(3)},
	\cdots ,
	\mathbb{Q}_1^{(s)}		
\end{pmatrix}^\top,\\ 
&= -\frac{d_v+2}{2}\left( {\h\bta}^{-1} {\h A}^{-1} (\ta -a) p^n \right)  \otimes_K \nabla_x {T^n}
-\frac{d_v+2}{2}\left( {\h\bta}^{-1} {\h A}^{-1}\h{\tilde{A}}  \, diag(\h{\bar{p}}) \otimes_K Id\right)\times \nabla_x \h{\bT} +\mathcal{O}\left(\frac{\varepsilon}{\Delta t} \right)\\
&= -\bar{\kappa}^{(1)} \otimes_K \nabla_x {T^n} -(\h{\bar{\kappa}} \otimes_K Id) \times  \nabla_x \h{\bT} +  \mathcal{O}\left(\frac{\varepsilon}{\Delta t} \right)
\end{align*}}
where {$\sigma(\h{\bu})= ( \sigma(u^{(2)}),\,...,\,\sigma(u^{(s)}))^\top$, $\nabla_x\h{\bT}= (\nabla_xT^{(2)},\,...,\,\nabla_xT^{(s)})^\top$}, 
$\sigma(u^{(i)}) = \nabla_x u^{(i)} + (\nabla_x u^{(i)})^\top - \frac{2}{d_v}\nabla_x \cdot u^{(i)} Id$ ($i = 2,...,s$) and $\h{\bar{p}}=(p^{(2)}, p^{(3)},...,p^{(s)})^\top$ with $p^{(i)}=\rho^{(i)}T^{(i)}$ ($i = 2,...,s$).
{Now we introduce the  viscosity and thermal conductivity matrices given by} 
\begin{equation}\label{mukappa}
\bar{\mu}=
\left(
\begin{array}{ll}
0 & 0  \\
\bar{\mu}^{(1)} & \hat{\bar{\mu}}\end{array}
\right),\quad \bar{\kappa}=
\left(
\begin{array}{ll}
0 & 0  \\
\bar{\kappa}^{(1)} & \hat{\bar{\kappa}}\end{array}
\right),
\end{equation}
where
\begin{equation}\label{mu_k_1}
\bar{\mu}^{(1)} =\frac{1}{1-\nu} \left(\h \bta^{-1} \h A^{-1}(\ta -a)p^n\right), \quad
\bar{\kappa}^{(1)} =\frac{d_v+2}{2}\left( {\h\bta}^{-1} {\h A}^{-1} (\ta -a) p^n \right),
\end{equation}
and ${\h{\bar\mu}}=(\mu_{ij})$ and ${\h{\bar\kappa}}=(\kappa_{ij})$
are $s-1\times s-1$ matrices such that
\begin{equation}\label{mu_k_bis_22}
\h{{\bar \mu}} = \frac{1}{(1-\nu)} \h \tau^{-1} (\h A^{-1} \h{\tilde{A}}) \,diag(\h{\bar{p}}), \quad
\h{{\bar\kappa}} =\frac{d_v + 2}{2} \, \h \tau^{-1}  (\h A^{-1} \h{\tilde{A}})\,diag(\h{\bar{p}}).
\end{equation}
Therefore, from \eqref{fin_bis_CK_tris22_bis} we obtain

{
\begin{align}\label{fin_tris}
\begin{split}
	U^{(1)} &= U^n,\\
	\h \bU &=   \h \be \otimes_K U^n - \Delta t \tilde{a} \otimes_K \nabla_x\cdot F(U^n)  - \Delta t {\bf\h{\tilde{A}}} \nabla_x\cdot F(\h \bU)  + \varepsilon \Delta t \tilde{a} \otimes_K \nabla \cdot S(U^n) + \varepsilon \Delta t {\bf \h{\tilde{A}}} \nabla_x \cdot \overline{S}(\h \bU) + 
	\mathcal{O}(\varepsilon^{2} )\\[2mm]
	U^{n+1} &=   U^n - \Delta t \tilde{b}_1 \nabla_x \cdot F(U^n)  - \Delta t {\bf\h{\tb}} \nabla_x \cdot F(\h \bU)  + \varepsilon \Delta t \tilde{b}_1 \nabla_x \cdot S(U^n) + \varepsilon \Delta t {\bf\h{\tb}} \nabla_x \cdot \overline{S}(\h \bU) 
	+
	\mathcal{O}(\varepsilon^2 ) 
\end{split}
\end{align}
}
with 
\begin{align*}
\begin{split}
\nabla_x \cdot S(U^n) &=\left(
\begin{array}{c}
	\nabla_x \cdot  {\bf 0},\\
	\nabla_x \cdot ({\mu^{n}} \sigma(u^n)),\\
	\nabla_x \cdot (
	{ \mu^{n}} \sigma(u^n) u^n + {\kappa^{n}} \nabla_x T^n)
\end{array}
\right),\quad \mu^{n} = \frac{1}{(1-\nu)\tau^{n}} p^{n},\quad \kappa^{n}=\frac{d_v+2}{2\tau^{n}} p^{n},\cr
\nabla_x\cdot \overline{S}(\h{\bU}) &=\left(\nabla_x\cdot \overline{S}(U^{(2)}), \nabla_x\cdot \overline{S}(U^{(3)}),...,\nabla_x\cdot \overline{S}(U^{(s)})
\right)^\top \in \mathbb{R}^{(d_v+2)(s-1)},\\ 
\nabla_x\cdot \overline{S}(U^{(i)}) &=\left(
\begin{array}{c}
	\nabla_x \cdot{\bf 0},\\
	\nabla_x \cdot\left({\bar{\mu}^{(1)}_{i-1}} \sigma(u^{n})\right)+ \nabla_x \cdot\left(\sum_{j=1}^{i-2}\h{\bar{\mu}}_{ij} \sigma(u^{(j+1)})\right)\\
	\nabla_x \cdot\left( \bar{\mu}^{(1)}_{i-1}\sigma(u^{n}) u^{n} + {\bar{\kappa}^{(1)}_{i-1} \nabla_x T^{n}}\right) + \nabla_x \cdot\left(\sum_{j=1}^{i-2} \h{\bar{\mu}}_{ij}\sigma(u^{(j+1)}) u^{(j)} + {\h{\bar{\kappa}}_{ij} \nabla_x T^{(j+1)}}\right)
\end{array}
\right)\in \mathbb{R}^{d_v+2}.
\end{split}
\end{align*}
From $\tau^{(i)}=\tau^{(j)} + \mathcal{O}(\Delta t)+\mathcal{O}(\varepsilon^2)$ for $i,j=1,...,s$, we further get
{
\begin{align*}
\begin{split}
	U^{(1)} &= U^n,\\
	\h \bU &=   \h \be \otimes_K U^n - \Delta t \tilde{a} \otimes_K \nabla_x\cdot F(U^n)  - \Delta t {\bf\h{\tilde{A}}} \nabla_x\cdot F(\h \bU) \\ 
	&+ \varepsilon \Delta t \tilde{a} \otimes_K \nabla \cdot S(U^n) + \varepsilon \Delta t {\bf \h{\tilde{A}}} \nabla_x \cdot \overline{\overline{S}}(\h \bU) +
	\mathcal{O}(\varepsilon^{2} )+ 
	\mathcal{O}(\varepsilon \Delta t^2) \\[2mm]
	U^{n+1} &=   U^n - \Delta t \tilde{b}_1 \nabla_x \cdot F(U^n)  - \Delta t {\bf\h{\tb}} \nabla_x \cdot F(\h \bU)\\  &+ \varepsilon \Delta t \tilde{b}_1 \nabla_x \cdot S(U^n) + \varepsilon \Delta t {\bf\h{\tb}} \nabla_x \cdot \overline{\overline{S}}(\h \bU) 
	+\mathcal{O}(\varepsilon^2 ) +\mathcal{O}(\varepsilon \Delta t^{2})
\end{split}
\end{align*}
}
with 
\begin{align*}
\begin{split}
\nabla_x\cdot \overline{\overline{S}}(\bU) &=\left(\nabla_x\cdot \overline{\overline{S}}(U^{(2)}), \nabla_x\cdot \overline{\overline{S}}(U^{(3)}),...,\nabla_x\cdot \overline{\overline{S}}(U^{(s)})
\right)^\top \in \mathbb{R}^{(d_v+2)(s-1)},\\ 
\nabla_x\cdot \overline{\overline{S}}(U^{(i)}) 
&=\left(
\begin{array}{c}
	\nabla_x \cdot{\bf 0}\\
	\left(\h A^{-1} (\tilde{a}-a)\right)_{i-1}\nabla_x \cdot \left( \mu^n     \sigma(u^n)\right) \\
	\left(\h A^{-1} (\tilde{a}-a)\right)_{i-1} \nabla_x\cdot \left(\left(  \mu^n  \sigma(u^n)u^n\right) + \kappa^n\nabla_x T^n\right))
\end{array}
\right)\\
&+\left(
\begin{array}{c}
	\nabla_x \cdot{\bf 0}\\
	\sum_{j=1}^{i-2}(\h{A}^{-1}\h{\tilde{A}})_{ij} \nabla_x \cdot\left(\mu^{(j+1)} \sigma(u^{(j+1)})\right)\\
	\sum_{j=1}^{i-2} (\h{A}^{-1}\h{\tilde{A}})_{ij} \nabla_x \cdot\left( \mu^{(j+1)}\sigma(u^{(j+1)}) u^{(j+1)} + \kappa^{(j+1)} \nabla_x T^{(j+1)}\right)
\end{array}
\right)
\end{split}
\end{align*}
where $\mu^{(j)} = \frac{1}{(1-\nu)\tau^{(j)}} p^{(j)}$, $\kappa^{(j)}=\frac{d_v+2}{2\tau^{(j)}} p^{(j)}$. Defining
\begin{align*}
\begin{split}
\nabla_x\cdot S(\h{\bU}) &=\left(\nabla_x\cdot S(U^{(2)}), \nabla_x\cdot S(U^{(3)}),...,\nabla_x\cdot S(U^{(s)})
\right)^\top \in \mathbb{R}^{(d_v+2)(s-1)},\\ 
\nabla_x\cdot S(U^{(i)}) 
&=\left(
\begin{array}{c}
	\nabla_x \cdot{\bf 0},\\
	\nabla_x \cdot\left(\mu^{(i)} \sigma(u^{(i)})\right),\\
	\nabla_x \cdot\left( \mu^{(i)}\sigma(u^{(i)}) u^{(i)} + \kappa^{(i)} \nabla_x T^{(i)}\right)
\end{array}
\right)\in \mathbb{R}^{d_v+2},\quad i=2,...,s,
\end{split}
\end{align*}
we finally obtain
{
\begin{align}\label{EqNS}
\begin{split}
	U^{(1)} &= U^n,\\
	\h \bU &=   \h \be \otimes_K U^n - \Delta t \tilde{a} \otimes_K \nabla_x\cdot F(U^n)  - \Delta t {\bf\h{\tilde{A}}} \nabla_x\cdot F(\h \bU) \\
	&+ \varepsilon \Delta t \tilde{a} \otimes_K \nabla_x \cdot S(U^n) + \varepsilon \Delta t {\h{\tilde{A}}\h{A}^{-1}(\tilde{a}-a)} \otimes_K \nabla_x \cdot S(U^n) + \varepsilon \Delta t {\bf \h{B}} \nabla_x \cdot S(\h \bU) +
	\mathcal{O}(\varepsilon^{2} )+ 
	\mathcal{O}(\varepsilon \Delta t^2)\\
	U^{n+1} &=   U^n - \Delta t \tilde{b}_1 \nabla_x \cdot F(U^n)  - \Delta t {\bf\h{\tb}} \nabla_x \cdot F(\h \bU)  \cr
	&+ \varepsilon \Delta t \tilde{b}_1 \nabla_x \cdot S(U^n) +  \varepsilon \Delta t {\h{\tilde{b}}^\top \h{A}^{-1}(\tilde{a}-a)} \nabla_x \cdot S(U^n)
	+
	\varepsilon \Delta t \boldsymbol{\omega} \nabla_x \cdot S(\h \bU) 
	+\mathcal{O}(\varepsilon^2 ) +\mathcal{O}(\varepsilon \Delta t^{2})
\end{split}
\end{align}
}
where ${\bf \h{B}}= \h{B} \otimes_K I_{d_v+2}$ with $\h{B}=\h{\tilde{A}}\h{A}^{-1}\h{\tilde{A}}$ and $\h{\boldsymbol{\omega}}= \h{\omega} \otimes_K I_{d_v+2}$ with $\h{\omega}= \h{\tilde{b}}^\top \h{A}^{-1}\h{\tilde{A}}$.
By the extra condition \eqref{22CK}, we get
\begin{align*}
\begin{split}
U^{(1)} &= U^n,\\
\h \bU &=   \h \be \otimes_K U^n - \Delta t \tilde{a} \otimes_K \nabla_x\cdot F(U^n)  - \Delta t {\bf\h{\tilde{A}}} \nabla_x\cdot F(\h \bU) \\
&+  \varepsilon \Delta t {\mathbb{\bf b}_1}  \otimes_K \nabla_x \cdot S(U^n) + \varepsilon \Delta t {\bf \h{B}} \nabla_x \cdot S(\h \bU) +
\mathcal{O}(\varepsilon^{2} )+ 
\mathcal{O}(\varepsilon \Delta t^2)\\
U^{n+1} &=   U^n - \Delta t \tilde{b}_1 \nabla_x \cdot F(U^n)  - \Delta t {\bf\h{\tb}} \nabla_x \cdot F(\h \bU)  \cr
&+ \varepsilon \Delta t \omega_1  \nabla_x \cdot S(U^n)
+
\varepsilon \Delta t \h{\boldsymbol{\omega}} \nabla_x \cdot S(\h \bU) 
+\mathcal{O}(\varepsilon^2 ) +\mathcal{O}(\varepsilon \Delta t^{2})
\end{split}
\end{align*}
where and {$\mathbb{\bf b}_1  = {\h{\tilde{A}}\h{A}^{-1}\tilde{a}}$} and $\omega_1={\h{\tilde{b}}^\top \h{A}^{-1}\tilde{a}}$.
Next, we consider the following explicit-type RK method of order $p$ 
applied to the CNS equations (\ref{NSeq}):
\begin{align}\label{fin_tris2}
\begin{split}
U^{(1)} &= U^n,\\
\h \bU &=   \h \be \otimes_K U^n - \Delta t \tilde{a} \otimes_K \nabla_x\cdot F(U^n)  - \Delta t {\bf{\h{\tilde{A}}}} \nabla_x\cdot F(\h \bU) \\
&+  \varepsilon \Delta t \mathbb{\bf b}_1  \otimes_K \nabla_x \cdot S(U^n) + \varepsilon \Delta t {\bf \h{B}} \nabla_x \cdot S(\h \bU)\\
U^{n+1} &=   U^n - \Delta t \tilde{b}_1 \nabla_x \cdot F(U^n)  - \Delta t {\bf\h{\tb}} \nabla_x \cdot F(\h \bU)  \cr
&+ \varepsilon \Delta t \omega_1  \nabla_x \cdot S(U^n)
+
\varepsilon \Delta t \h{\boldsymbol{\omega}} \nabla_x \cdot S(\h \bU) 
\end{split}
\end{align}		
{charterized by the pair $(\tilde{A}, \tilde{b})$ and $(B, \omega)$ given by \eqref{TYPE_II} for type II schemes.}

The same conclusions regarding the local truncation error presented in Theorem \ref{thm 1} can be applied here for the case of type II.
\end{proof}

\subsection{Uniform accuracy}

{
{In the stiff case, i.e., $\Delta t \gg \varepsilon$, the phenomenon of order reduction may occur, particularly in the worst case when $\Delta t \approx \varepsilon$  (intermediate regime of $\varepsilon$). In this scenario, the classical order of the method can drop off, leading to a loss of accuracy in IMEX-RK schemes  within this regime.}
{This phenomenon was investigated in detail in \cite{Boscarino2007, Boscarino2017, IMEX_book}. In particular in \cite{Boscarino2017} the authors considered a prototype hyperbolic relaxation system and performed an asymptotic expansion up to $\mathcal{O}(\varepsilon)$ for IMEX-RK methods of type I and II.  They showed that, under additional order conditions, these schemes effectively reduce to explicit-type RK methods at the $\mathcal{O}(\varepsilon)$ level.  These methods were carefully designed to accurately match the $\mathcal{O}(\varepsilon)$ terms up to a desired order $p$, ensuring that effectively behave as explicit RK schemes of order $p$ for the convection-diffusion equation when $\varepsilon$ is small but not negligible. 
}
{However, constructing high-order IMEX-RK schemes of type I requires additional stages to satisfy extra-order conditions, their construction is particularly challenging, \cite{Boscarino2017, IMEX_book}.  In this work, we therefore focus on IMEX-RK schemes of type II,\cite{Boscarino2017}, since their structures make them easier to construct.
	\\
	We note that if conditions \eqref{22CK} are not satisfied, order reduction is likely to occur in these intermediate regimes of $\varepsilon$, for example, in the  numerical tests (Test 1 and Test 2), a classical ARS scheme of type II in \eqref{A3:ARS2-IMEX3} show this phenomenon of order reduction. Alternatively, we adopt IMEX-RK schemes of type II introduced in  \cite{Boscarino2017} that satisfy  conditions \eqref{22CK} ensuring the consistency of the schemes (Theorem  \ref{thm 2}). Furthermore, these schemes satisfy additional order conditions, preventing order reduction in the intermediate regime and ensuring uniform accuracy over the full range of Knudsen numbers. In particular, we examine two IMEX-RK schemes of type II called IMEX-II-GSA3 \eqref{GSA:CK-IMEX3} and IMEX-II-ISA3 \eqref{A3:CK-IMEX3}. These two schemes are specifically designed in \cite{Boscarino2017} to improve the resolution of hyperbolic relaxation systems, especially in the intermediate regime, i.e., $\varepsilon \approx \Delta t$.
	\\
	In Section \ref{sec: Numerocal results}, we compare these methods with other IMEX-RK schemes of type I or II existing in literature.
}}

{Finally, we note that IMEX-II-ISA3 scheme fulfills  $\tilde{b} = b$ and the quantity in \eqref{EqNS} becomes}
\begin{equation}\label{bb_b}
\h{\tilde{b}}^\top \h A^{-1}({a} - \tilde{a}) = \h{{b}}^\top \h A^{-1}({a} - \tilde{a}) = \h \be_s^\top (a - \tilde{a})
\end{equation}
which equals zero if $\tilde{a}_{s1} = a_{s1}$. {It is worth noting that the assumption 
$\tilde{b} = b$, plays an important rule in mitigating the lost of accuracy, especially in regimes where $\varepsilon \approx \Delta t$, as demonstrated in \cite{Boscarino2007}. In Section \ref{sec: Numerocal results}, we will show that IMEX-II-GSA3 scheme, which is GSA and $\tilde{b} \neq b$ leads to a mild order reduction for small values of $\varepsilon$. On the contrary, IMEX-II-ISA3 with $\tilde{b} = b$, does not exhibit order reduction, (as confirmed by the numerical results in the next section)}.

\section{Numerical Results}\label{sec: Numerocal results}
In this section, we test the performance of different types of IMEX RK schemes using two distinct models: the 1 + 1 BGK model and the 1 + 2 ES-BGK model. We use the fifth-order finite difference WENO method \cite{Shu} to approximate spatial derivatives. The discretization of the velocity domain is achieved through uniform grid points within a sufficiently large interval or domain. The choice of free parameter $\nu$ and collision frequency $\tau$ will be explained in each problem.

Throughout this section, we consider IMEX-RK schemes of type I and II. In particular, we take into account the GSA IMEX-II-GSA3 scheme and IMEX-II-ISA3 with $\tilde{b} = b$. 
For comparison, we consider the {SI-IMEX(4,4,3)} scheme of type I  not GSA in \cite{IMEX_book}, and the scheme GSA ARS(4,4,3)  of type II in \cite{ARS}.


In the following, these schemes are represented, as usual, by the double Butcher tableau.

\begin{itemize}

\item  Third-order {SI-IMEX(4,4,3)} scheme of type I in \cite{IMEX_book}. (Top: explicit method. Bottom: implicit method).

\begin{align}
	\begin{split}
		\begin{array}{c|cccc}
			0& 0 & 0 & 0 & 0\\
			\gamma & \gamma & 0 & 0 & 0\\
			c_3 &  1.243893189483362 &  -0.525959928729133 & 0& 0\\
			1 & 0.630412558152867 & 0.786580740199155  & -0.416993298352022 & 0\\
			\hline
			& 0 & 1.208496649176010 & -0.644363170684468 & \gamma
		\end{array} \\[4mm]  \nonumber 
		\begin{array}{c|cccc}
			\gamma &  \gamma & 0 & 0& 0\\
			\gamma & 0 & \gamma & 0& 0\\
			c_3 & 0 & 0.282066739245771 & \gamma & 0 \\
			1  & 0 & 1.208496649176010 & -0.644363170684468 & \gamma\\
			\hline 
			& 0 & 1.208496649176010 & -0.644363170684468 & \gamma
		\end{array}
	\end{split}
\end{align}

$\gamma = 0.435866521508459, \, c_3 = 0.7179332607542294$.


\item Third-order GSA ARS(4,4,3) in \cite{ARS}:

\begin{equation}\label{A3:ARS2-IMEX3}
	\begin{array}{c|ccccc}
		0 & 0 & 0 & 0 &0 &0 \\
		1/2& 1/2& 0 & 0 & 0 &0\\
		2/3 & 11/18 & 1/18 & 0 & 0 &0 \\
		1/2 & 5/6 &-5/6 & 1/2 & 0 &0\\
		1 & 1/4 &7/4 & 3/4 & -7/4 &0\\
		\hline \\
		& 1/4 &7/4 & 3/4 & -7/4 &0
	\end{array} \qquad
	\begin{array}{c|ccccc}
		0 & 0 & 0 & 0 &0 &0 \\
		1/2&  0&1/2 & 0 & 0 &0\\
		2/3 & 0 & 1/6 & 1/2 & 0 &0 \\
		1/2 & 0 &-1/2 & 1/2 & 1/2 &0\\
		1 & 0 &3/2 & -3/2 & 1/2 &1/2\\
		\hline \\
		&0&  3/2 &-3/2 & 1/2 & 1/2 
	\end{array}
\end{equation}

\item Third-order GSA IMEX-II-GSA3 of type ARS, \cite{Boscarino2017}. (Top: explicit method. Bottom: implicit method): 
\begin{align}\label{GSA:CK-IMEX3}
	\begin{split}
		\begin{array}{c|ccccccc}
			0 & 0 & 0 & 0 &0 &0 &0 &0 \\
			43/100& 43/100& 0 & 0 & 0 &0&0 &0 \\
			336/929 & 0 & 336/929 & 0 & 0 &0 &0 &0 \\
			-29/42 & 0 & -29/42 & 0 & 0 &0&0 &0 \\
			581/527 & 0 &-1213/770 & 2491/956 & 267/3701 &0&0 &0 \\
			2/3 & 0 & -197/1238 & 499/743 & 0 &581/3768 &0 &0 \\
			1 & 0 & 263/620 & 134/16589 & 1040/22119 &0  &4777/9174 &0 \\
			\hline \\
			& 0 & 263/620 & 134/16589 & 1040/22119 &0  &4777/9174 &0 \\
		\end{array}\\
		\begin{array}{c|ccccccc}
			0 & 0 & 0 & 0 &0 &0 &0 &0 \\
			43/100& 0 & 43/100 & 0 & 0 &0&0 &0 \\
			336/929 & 0 & -168/2459 & 43/100 & 0 &0 &0 &0 \\
			-29/42 & 0 & -2353/2100 & 0 & 43/100 &0&0 &0 \\
			581/527 & 0 &889/1322 & 0 & 0 &43/100 &0 &0 \\
			2/3 & 0 & 247/2416 & 0 &408/3035 &0 & 43/100&0 \\
			1 & 0 & 872/1201 & 0 & 139/4081 & -50/237  &434/20817 &43/100 \\
			\hline \\
			& 0 & 872/1201 & 0 & 139/4081 & -50/237  &434/20817 &43/100 \\
		\end{array}
	\end{split}
\end{align}

\item Third-order IMEX-II-ISA3, \cite{Boscarino2017}. (Top: explicit method. Bottom: implicit method):

\begin{align}\label{A3:CK-IMEX3}
	\begin{split}
		\begin{array}{c|ccccccc}
			0 & 0 & 0 & 0 &0 &0 &0 &0 \\
			1/5& 1/5& 0 & 0 & 0 &0&0 &0 \\
			1/3 & 0 & 1/3 & 0 & 0 &0 &0 &0 \\
			2/3 & 0 & 557/867 & 7/289 & 0 &0&0 &0 \\
			3/4 & 0 &16/289 & 803/1156 & 0 &0&0 &0 \\
			1 & 0 & 13348/3993 & -9355/3993 & 0 & 0 &0 &0 \\
			1 & 0 & 75/154 & 0 & -3/14 &8/11&0 &0 \\
			\hline \\
			& 0 & -155/112  & 251/80  &-547/280 &2/3 &1/3& 1/5 \\ 
		\end{array}\\
		\begin{array}{c|ccccccc}
			0 & 0 & 0 & 0 &0 &0 &0 &0 \\
			1/5& 0 & 1/5 & 0 & 0 &0&0 &0 \\
			1/3 & 0 & 2/15 & 1/5 & 0 &0 &0 &0 \\
			2/3 & 0 & 7/15 & 0 & 1/5 &0&0 &0 \\
			3/4 & 0 &1137/1004 & -731/1255 & 0 &1/5 &0 &0 \\
			1 & 0 & 447/565 & 0 & -636/613 &519/496 & 1/5&0 \\
			1 & 0 & -155/112  & 251/80  &-547/280 &2/3 &1/3& 1/5  \\
			\hline \\
			& 0 & -155/112  & 251/80  &-547/280 &2/3 &1/3& 1/5 \\ 
		\end{array}
	\end{split}
\end{align}

\end{itemize}

\subsection{Accuracy test.} 
{\em Test 1.} In this test we investigate numerically the convergence rate solving the BGK model for a smooth solution. The initial data is taken as
\begin{align}\label{init well-prepared 1D}
\begin{split}
	\rho_0(x)&=1+0.2\sin(\pi x),\quad {u_0(x)=1},\quad T_0(x)=\frac{1}{\rho_0(x)},\cr
	f_0(x,v)&=\mathcal{M}(x,v) - \frac{\varepsilon}{\tau}\left(I-\Pi_\mathcal{M}\right)(v_1\partial_x \mathcal{M})
\end{split}
\end{align}
where
\[ 
\mathcal{M}(x,v)=\frac{\rho_0(x)}{2\pi T_0(x)}\exp\left(-\frac{|v-u_0(x)|^2}{2T_0(x)}\right).
\]
For an explicit definition of the term $\left(I-\Pi_\mathcal{M}\right)(v_1\partial_x \mathcal{M})$, please refer to Remark \ref{B1} in the appendix. \\
We use $32$ uniform points for velocity in the interval $v \in [-10, 10]$, and $N_x$ uniform
points for space in the interval $x \in [0, 2]$  with periodic boundary conditions. We set $\Delta t = \text{CFL}\Delta x /10 $ with $\text{CFL} = 0.9$ and $\tau=1$.  

Since the exact solution is not available for this test, as well as for the subsequent test, the $L^1$ errors are computed as the difference of the numerical solutions on two consecutive meshes in space, i.e., we use the numerical solution on a finer mesh with mesh size $\Delta x/2$ as the reference solution to compute the error for solutions on the mesh size of $\Delta x$. 

Table \ref{1d accuracy} shows results for third order accurate IMEX-RK schemes at $t = 0.25$. First we can see order reduction in the intermediate regime for ARS(4,4,3) and {SI-IMEX(4,4,3)} scheme. Although we observe order reductions for IMEX-RK at small Knudsen numbers, such as $10^{-4}$, it is a typical issue of IMEX-RK methods. In general, it is highly nontrivial to avoid order reduction of IMEX-RK schemes in the intermediate regime (for example, see papers \cite{Boscarino2007, Boscarino09, Boscarino10}).

For comparison, we test the two third-order schemes introduced in \cite{Boscarino2017} IMEX-II-GSA3 (\ref{GSA:CK-IMEX3}) and IMEX-II-ISA3 (\ref{A3:CK-IMEX3}) for this example. The third order accuracy is achieved for IMEX-II-ISA3 for all  values of $\varepsilon$ even in the intermediate regime, i.e., the scheme maintains uniform accuracy, on the other hand IMEX-II-GSA3 shows a slight order reduction for the value $\varepsilon = 10^{-4}$. Note that, as discussed in Section \ref{Sec:IMEXRK}, since the IMEX-II-ISA3 scheme has same weights, it performs better than the other methods for intermediate values of $\varepsilon$.

\begin{table}[!htbp]
\centering	
{\small
	\begin{tabular}{ |p{3cm}||p{3cm}|p{3cm}|p{3cm}|p{3cm}|}
		\hline
		\multicolumn{5}{ |c| }{Relative $L^1$ error and order of density} \\ 
		\hline
		\multicolumn{5}{|c|}{{SI-IMEX(4,4,3)}-WENO35} \\
		\hline
		& 
		$ \varepsilon = 1$ &
		$ \varepsilon = 10^{-2}$ &
		$ \varepsilon = 10^{-4}$ & 
		$ \varepsilon = 10^{-6}$\\
		\hline
		($N_x$,$2N_x$) & $L_1$ \, \, \, \, \, Order& $L_1$ \, \, \, \, \, Order & $L_1$ \, \, \, \, \, Order &  $L_1$ \, \, \, \, \, Order\\
		\hline
		(40,80)& 5.5435e-06
		\, \, \,  &  2.4034e-07
		& 2.4841e-07
		&2.5178e-07 \\
		(80,160)    & 1.9598e-07
		\, \, \, 4.82
		&1.5473e-08
		\, \, \, 3.96
		&8.0955e-09
		\, \, \,4.94
		& 8.1595e-09 \, \, \, 4.95\\
		(160,320) & 8.6077e-09
		\, \, \, 4.51
		& 1.9379e-09
		\, \, \, 3.00
		&2.9470e-10
		\, \, \,4.78
		&  2.5731e-10   \, \, \,  4.99 \\
		(320,640) & 6.0318e-10
		\, \, \, 3.84
		&  2.5092e-10
		\, \, \, 2.95
		& 3.4689e-11
		\, \, \,3.09
		&8.1584e-12 \, \, \, 4.98\\
		(640,1280) &  6.2088e-11 \, \, \, 3.28 & 3.2067e-11 \, \, \, 2.97& 9.1516e-12   \, \, \,1.92
		& 3.3436e-13 \, \, \, 4.61\\
		\hline
		\hline
		\multicolumn{5}{|c|}{GSA ARS(4,4,3)-WENO35} \\
		\hline
		& 
		$ \varepsilon = 1$ &
		$ \varepsilon = 10^{-2}$ &
		$ \varepsilon = 10^{-4}$ & 
		$ \varepsilon = 10^{-6}$\\
		\hline
		($N_x$,$2N_x$) & $L_1$ \, \, \, \, \, Order& $L_1$ \, \, \, \, \, Order & $L_1$ \, \, \, \, \, Order &  $L_1$ \, \, \, \, \, Order\\
		\hline
		(40,80)& 1.6273e-05  \, \, \,  &  1.2738e-06  &1.1499e-06 &1.1482e-06 \\
		(80,160)    & 8.5850e-07 \, \, \, 4.24 &4.2936e-08 \, \, \, 4.89 &3.5346e-08\, \, \,   5.02& 3.5193e-08 \, \, \, 5.02\\
		(160,320) &      5.9885e-08  \, \, \,   3.84 & 3.5424e-09 \, \, \, 3.60&1.3376e-09   \, \, \,4.72 &  1.2315e-09   \, \, \,  4.83 \\
		(320,640) &    6.2342e-09  \, \, \, 3.26 &  4.7790e-10 \, \, \, 2.88& 8.0292e-10 \, \, \,0.73 &5.6966e-11 \, \, \, 4.43\\
		(640,1280) &  7.4782e-10 \, \, \, 3.06 & 6.2449e-11 \, \, \, 2.93& 4.7953e-10   \, \, \,0.74& 6.3566e-12 \, \, \, 3.16\\
		\hline
		\hline
		\multicolumn{5}{|c|}{IMEX-II-GSA3-WENO35} \\
		\hline
		($N_x$,$2N_x$) & $L_1$ \, \, \, \, \, Order& $L_1$ \, \, \, \, \, Order & $L_1$ \, \, \, \, \, \, \, Order &  $L_1$ \, \, \, \, \, Order\\
		\hline 
		(40,80) & 1.3047e-05  & 1.2245e-06&  1.1307e-06&1.1301e-06\\
		(80,160)    &4.2711e-07 \, \, \, 4.93& 3.5404e-08 \, \, \, 5.11&3.2615e-08\, \, \, 5.11   & 3.2616e-08 \, \, \,   5.11 \\
		(160,320) &  1.4238e-08 \, \, \, 4.90   & 1.0951e-09 \, \, \, 5.01 &  8.8667e-10\, \, \, 5.20& 8.8936e-10 \, \, \,  5.20\\
		(320,640) & 2.4517e-09 \, \, \,  2.53    & 1.1533e-10 \, \, \, 3.24 & 2.3656e-11\, \, \, 5.22  &  2.1240e-11 \, \, \,  5.38\\
		(640,1280) &  3.2531e-10 \, \, \, 2.92    &1.5141e-11  \, \, \, 2.93  & 5.8846e-12\, \, \, 2.00 & 3.5767e-12 \, \, \,  2.57\\
		\hline
		\hline
		\multicolumn{5}{|c|}{IMEX-II-ISA3-WENO35} \\
		\hline
		($N_x$,$2N_x$) & $L_1$ \, \, \, \, \, Order& $L_1$ \, \, \, \, \, Order & $L_1$ \, \, \, \, \, \, \, Order &  $L_1$ \, \, \, \, \, Order\\
		\hline 
		(40,80) &  1.5114e-05  &1.2600e-06 &  1.1486e-06 &  1.1426e-06\\
		(80,160)    &6.8701e-07 \, \, \, 4.45& 3.9869e-08 \, \, \, 4.98&3.4483e-08 \, \, \,5.05 & 3.4369e-08 \, \, \, 5.05 \\
		(160,320) &   3.6504e-08 \, \, \, 4.23   & 1.4366e-09 \, \, \,  4.79 &  1.1093e-09 \, \, \,4.95&  1.1169e-09 \, \, \, 4.94\\
		(320,640) & 3.2821e-09 \, \, \,   3.47     &   7.6254e-11 \, \, \, 4.23&  4.9755e-11 \, \, \,4.47  &    4.4882e-11 \, \, \, 4.63\\
		(640,1280) &  3.7817e-10 \, \, \, 3.11&  7.0441e-12  \, \, \, 3.43  & 4.4539e-12 \, \, \,3.48& 4.7061e-12 \, \, \, 3.25\\
		\hline
	\end{tabular}
	\caption{Accuracy test for the BGK equation. Initial data is given in \eqref{init well-prepared 1D},  $L^1$ error on the density $\rho$ at $T = 0.25$ and $N_v = 32$.}\label{1d accuracy}
}
\end{table}

{\em Test 2.} In this test we investigate numerically the convergence rate solving the ES-BGK model for a smooth solution considering $d_v = 2$ with $v = (v_1, v_2)$, and  the initial well-prepared initial data (\ref{init well-prepared 1D}),  where
\begin{equation}\label{init well-prepared 2D}
\mathcal{M}(x,v)=\frac{\rho_0(x)}{2\pi T_0(x)}\exp\left(-\frac{(v_1-u_0(x))^2 + v_2^2}{2T_0(x)}\right).
\end{equation}
For the velocity discretization, we use $32 \times 32$ uniform points in the domain $v \in [-10, 10] \times [-10,10]$. Note that, for the {SI-IMEX(4,4,3)} scheme in order to achieve the expected order of convergence, we required more points in velocity. In this test we use $N_v = 80$ uniform points for the velocity. We use $N_x$ uniform points for space in the interval $x \in  [0,2]$  with periodic boundary conditions. The time step is taken as $\Delta t = \text{CFL} \Delta x/10$ with $\text{CFL} = 0.9$, $\tau=1$ and $\nu = -1/2$. 
\begin{table}[htbp]
\centering	
{\small
	\begin{tabular}{ |p{3cm}||p{3cm}|p{3cm}|p{3cm}|p{3cm}|}
		\hline
		\multicolumn{5}{ |c| }{Relative $L^1$ error and order of density} \\ 
		\hline   
		\multicolumn{5}{|c|}{{SI-IMEX(4,4,3)}-WENO35 $N_v=80$} \\
		\hline
		& 
		$ \varepsilon = 1$ &
		$ \varepsilon = 10^{-2}$ &
		$ \varepsilon = 10^{-4}$ & 
		$ \varepsilon = 10^{-6}$\\
		\hline
		\hline
		($N_x$,$2N_x$) & $L_1$ \, \, \, \, \, Order& $L_1$ \, \, \, \, \, Order & $L_1$ \, \, \, \, \, Order &  $L_1$ \, \, \, \, \, Order\\
		\hline
		(40,80)&3.1641e-05 
		& 1.4962e-06
		&   1.1980e-06
		& 1.1999e-06
		\, \, \,  
		\\
		(80,160)    
		& 1.8285e-06
		\, \, \, 4.11   
		&4.4260e-08
		\, \, \,5.07
		& 3.5725e-08
		\, \, \, 5.06
		& 3.6019e-08
		\, \, \, 5.05
		\\
		(160,320) 
		& 8.6363e-08
		\, \, \,  4.40
		&1.7243e-09
		\, \, \,4.68
		&  1.0440e-09
		\, \, \, 5.09
		&      1.1081e-09
		\, \, \,   5.02
		\\
		(320,640) 
		& 6.2789e-09
		\, \, \, 3.78
		& 1.9614e-10
		\, \, \,3.13
		&  5.0039e-11
		\, \, \, 4.38
		&    3.4509e-11
		\, \, \,  5.00
		\\
		(640,1280) &   4.1337e-10
		\, \, \, 3.92
		&  2.5833e-11   \, \, \,2.92
		& 1.0886e-11 \, \, \, 2.20
		&  3.4304e-12 \, \, \, 3.33\\
		\hline
		\hline
		\multicolumn{5}{|c|}{GSA ARS(4,4,3)-WENO35 $N_v = 32$} \\
		\hline
		& 
		$ \varepsilon = 1$ &
		$ \varepsilon = 10^{-2}$ &
		$ \varepsilon = 10^{-4}$ & 
		$ \varepsilon = 10^{-6}$\\
		\hline
		\hline
		($N_x$,$2N_x$) & $L_1$ \, \, \, \, \, Order& $L_1$ \, \, \, \, \, Order & $L_1$ \, \, \, \, \, Order &  $L_1$ \, \, \, \, \, Order\\
		\hline
		(40,80)& 5.8625e-06   \, \, \,  &   8.7440e-08  & 6.1776e-08 &6.2235e-08 \\
		(80,160)    & 4.0040e-07 \, \, \, 3.87 & 5.0511e-09  \, \, \, 4.11 &2.3576e-09\, \, \,   4.71& 2.4420e-09 \, \, \, 4.67\\
		(160,320) &      4.0934e-08  \, \, \,   3.29 &  5.0181e-10 \, \, \, 3.33&2.5187e-10   \, \, \,3.22 &  1.4842e-10   \, \, \,  4.04 \\
		(320,640) &    4.8923e-09 \, \, \,  3.06 &  6.0480e-11  \, \, \, 3.05& 1.1787e-10 \, \, \,1.09 &1.4216e-11 \, \, \, 3.38\\
		(640,1280) &  6.0623e-10 \, \, \, 3.01 & 7.5418e-12 \, \, \, 3.00&  4.6731e-11   \, \, \,1.33&   1.8633e-12 \, \, \, 2.93\\
		\hline
		\hline
		\multicolumn{5}{|c|}{IMEX-II-GSA3-WENO35 $N_v = 32$} \\
		\hline
		($N_x$,$2N_x$) & $L_1$ \, \, \, \, \, Order& $L_1$ \, \, \, \, \, Order & $L_1$ \, \, \, \, \, \, \, Order &  $L_1$ \, \, \, \, \, Order\\
		\hline 
		(40,80) & 3.3509e-06   &  6.5315e-08& 1.1307e-06&4.5811e-07\\
		(80,160)    &1.1158e-07 \, \, \, 4.90&  1.4444e-09 \, \, \, 5.49&3.2615e-08\, \, \, 5.11   &  1.2718e-08 \, \, \,   5.17 \\
		(160,320) &  1.5075e-08 \, \, \, 2.88   & 1.4088e-10 \, \, \, 3.35 &  8.8667e-10\, \, \, 5.20& 3.8152e-10 \, \, \,  5.05\\
		(320,640) & 3.7304e-09  \, \, \,  2.01    & 3.6735e-11 \, \, \, 1.93& 2.3656e-11 \, \, \,5.22  &   1.1960e-11 \, \, \, 5.00\\
		(640,1280) &   2.6209e-10 \, \, \, 3.83    &3.2301e-12 \, \, \, 3.50  & 5.8846e-12 \, \, \,2.00 &  1.7831e-12 \, \, \,  2.75\\
		\hline
		\hline
		\multicolumn{5}{|c|}{IMEX-II-ISA3-WENO35 $N_v = 32$} \\
		\hline
		($N_x$,$2N_x$) & $L_1$ \, \, \, \, \, Order& $L_1$ \, \, \, \, \, Order & $L_1$ \, \, \, \, \, \, \, Order &  $L_1$ \, \, \, \, \, Order\\
		\hline 
		(40,80) &   4.9602e-06  &7.9449e-08 &   5.9187e-08 &  4.6149e-07\\
		(80,160)    &  2.6934e-07 \, \, \, 4.20& 3.4997e-09 \, \, \, 4.50& 2.0718e-09 \, \, \,4.83 &  1.3395e-08\, \, \, 5.10 \\
		(160,320) &    2.3668e-08 \, \, \, 3.50   & 2.5659e-10 \, \, \,  3.76 &  1.0526e-10 \, \, \,4.29&  4.7348e-10\, \, \, 4.82\\
		(320,640) &   2.7055e-09 \, \, \,   3.12     &   2.7401e-11 \, \, \, 3.22&   9.1594e-12 \, \, \,3.52  &    2.0956e-11 \, \, \,4.50\\
		(640,1280) &   3.3206e-10  \, \, \, 3.02&  3.2314e-12 \, \, \, 3.08  & 1.1581e-12\, \, \, 2.98& 2.6896e-12\, \, \, 2.96\\
		\hline
	\end{tabular}
	\caption{Accuracy test for the ES-BGK equation. Initial data is given in \eqref{init well-prepared 1D} with \eqref{init well-prepared 2D},  $L^1$ error on the density $\rho$ at $T = 0.25$.}\label{2d_bis accuracy}
}
\end{table}

Table \ref{2d_bis accuracy} shows the result for the third order accurate IMEX-RK schemes at $t = 0.25$. As the one dimensional case we can see order reduction in the intermediate regime ($\varepsilon=10^{-4}$) for the ARS(4,4,3) scheme. 
The third order scheme IMEX-II-ISA3 achieves the third order accuracy for all the values of $\varepsilon$ even in the intermediate regime, whereas IMEX-II-GSA3 and {SI-IMEX(4,4,3)} schemes show a slight order reduction for the value $\varepsilon = 10^{-4}$.

{
At the theoretical level, the ES-BGK model approximates the NSE equations with an error of order $\mathcal{O}(\varepsilon^2)$, as is well known from the Chapman-Enskog expansion. However, verifying this convergence numerically is extremely challenging. Indeed, the numerical method approximates the ES-BGK model, not the NSE system directly. Hence, the observed error will always be a combination of the {modeling error}, between ES-BGK and NSE, and the {numerical discretization error} in approximating both models. 
With this view point, to show the error between ES-BGK and NSE is $\mathcal{O}(\varepsilon^2)$, we consider the same initial data used in Test 2,	compute very accurate solutions for each model for a range of Knudsen numbers, and check whether the discrepancy of the numerical solutions show expected order $\mathcal{O}(\varepsilon^2)$. In view of this, we compute numerical solutions with sufficiently small $\Delta t$ and $\Delta x$ (for both ES-BGK and NSE) and large number of velocity grid points (for ES-BGK), and measure the discrepancy in relative $L^1$-norm. In the following figure \ref{fig ESNSE}, it appears that the convergence rates are close to $2$ when $10^{-4} \leq \varepsilon \leq 10^{-2}$. On the other cases, the second order is not obtained.
Note that when $\varepsilon$ is relatively small $(\varepsilon < 10^{-4})$, the result is improved as we increase the accuracy of numerical solutions. However, when $\varepsilon$ is relatively large $(\varepsilon>10^{-2})$,  the errors are not improved even for high accurate solutions. This can be interpreted as modelling errors between ES-BGK and  NSE.}

\begin{figure}[!htbp]
\centering
\includegraphics[width=0.46\textwidth]{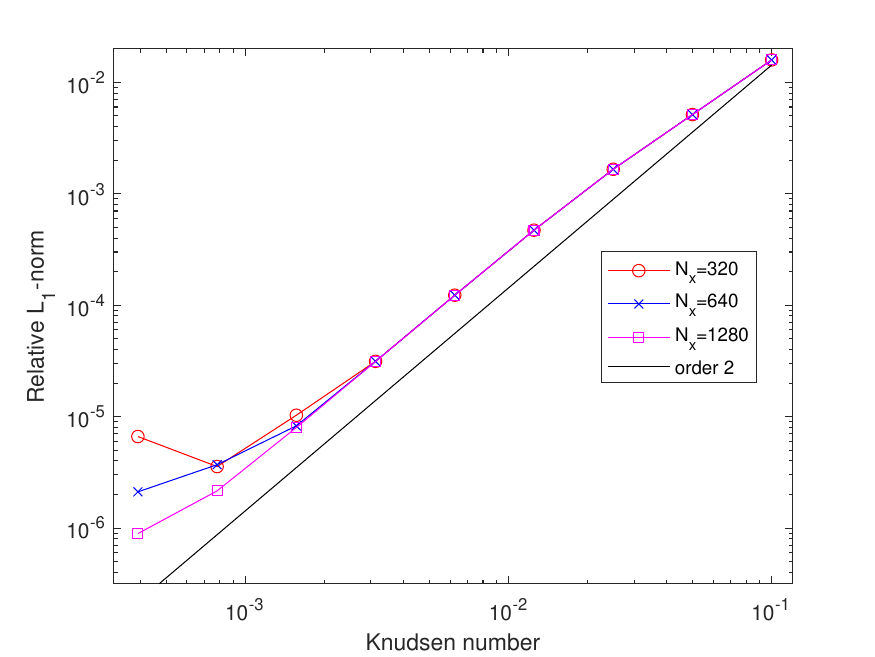}
\caption{{Knudsen number($\varepsilon$) vs relative $L_1$-norm. For the comparison, we used the same initial data for test 2, and compute numerical solutions up to $t=0.5$.}}
\label{fig ESNSE}
\end{figure}

{\em Test 3. Riemann Problem.}
In this problem, we consider a Riemann problem in 1D space and 2D velocity domain. The same test has been adopted in \cite{JinFilbet} to show the consistency between the ES-BGK model and BTE or NSE. As initial macroscopic states, we use
\[
\left(\rho_0, u_{x0}, u_{y0}, T_0 \right)
= \begin{cases}
(1, M\sqrt{2}, 0,1),    & -1 \le x \le 0.5,\\[2mm]
(\frac{1}{8},0,0,\frac{1}{4}), & \hspace{4mm}\textrm{otherwise},
\end{cases}
\]
where $M = 2.5$ is the Mach number. Here we impose the free-flow boundary condition in space $x \in [-1, 2]$. We truncate the velocity domain with $v_{max} = 15$.
We compare numerical solutions at time $t=0.15,\,0.2,\,0.4$ 
with the time step that corresponds to $CFL = 0.5$. Here we set $\nu = -1$ and $\tau = 0.9\pi \rho/2$ following \cite{JinFilbet}, which gives the following viscosity and heat conductivity:
$$
\mu = \frac{1}{0.9 \pi}T, \quad \kappa = \frac{1}{0.9\pi}T.
$$
We consider these transport coefficients when computing the reference {solutions} to NSE. We remark that this choice matches the viscosity of ES-BGK model and the one derived from BTE for Maxwellian molecules \cite{JinFilbet}.

In Figs. \ref{fig:riemann 1}-\ref{fig:riemann 2}, we present numerical results for BTE, NSE and ES-BGK model for different Knudsen numbers $\varepsilon = 0.5, 0.1$ respectively. For BTE, we use an explicit fourth-order semi-Lagrangian scheme \cite{Boscarino:2024}, while for NSE, we use the classical RK4 and WENO23. For ES-BGK, we use the combination of IMEX-II-ISA3 for the time discretization and WENO23 for the space one. For $\varepsilon=0.5$, the numerical solution of the ES-BGK model is very close to that of BTE, while solutions to NSE are deviated from the other solutions. On the other hand, for $\varepsilon=0.1$, the discrepancy between three solutions become smaller, which confirms the consistency of the three models.
\begin{figure}[!htbp]
\centering
\includegraphics[width=0.46\textwidth]{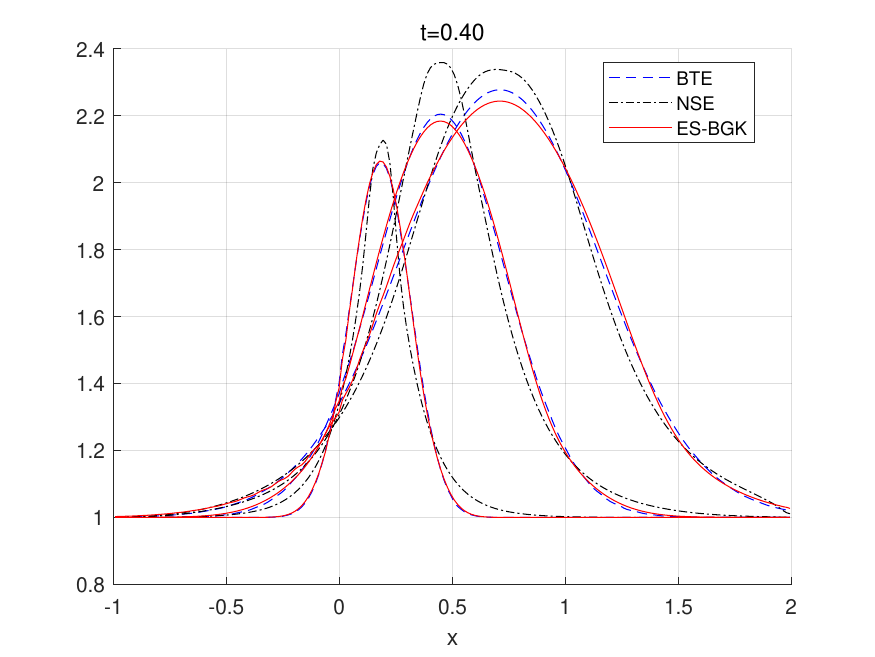}
\includegraphics[width=0.46\textwidth]{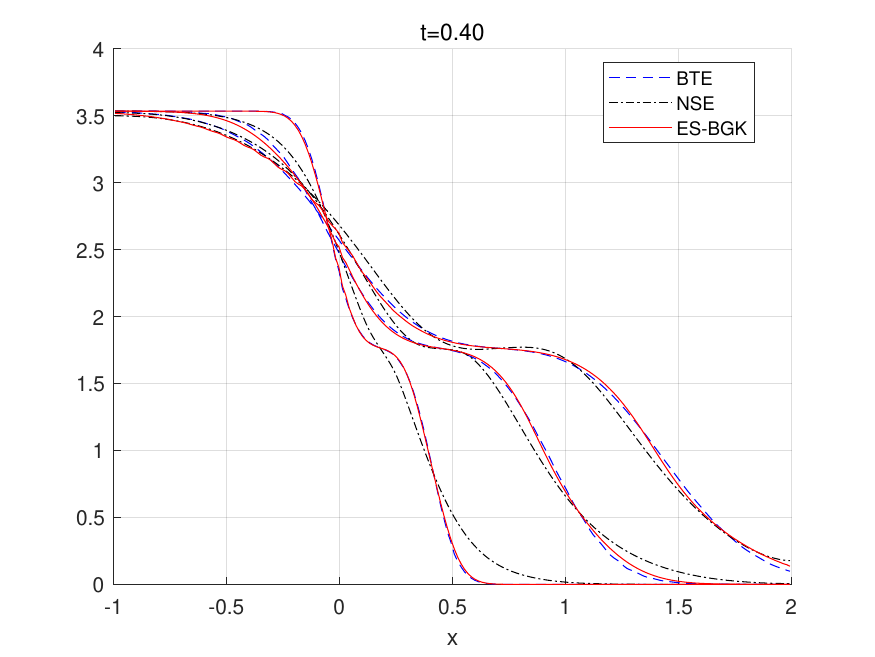}
\includegraphics[width=0.46\textwidth]{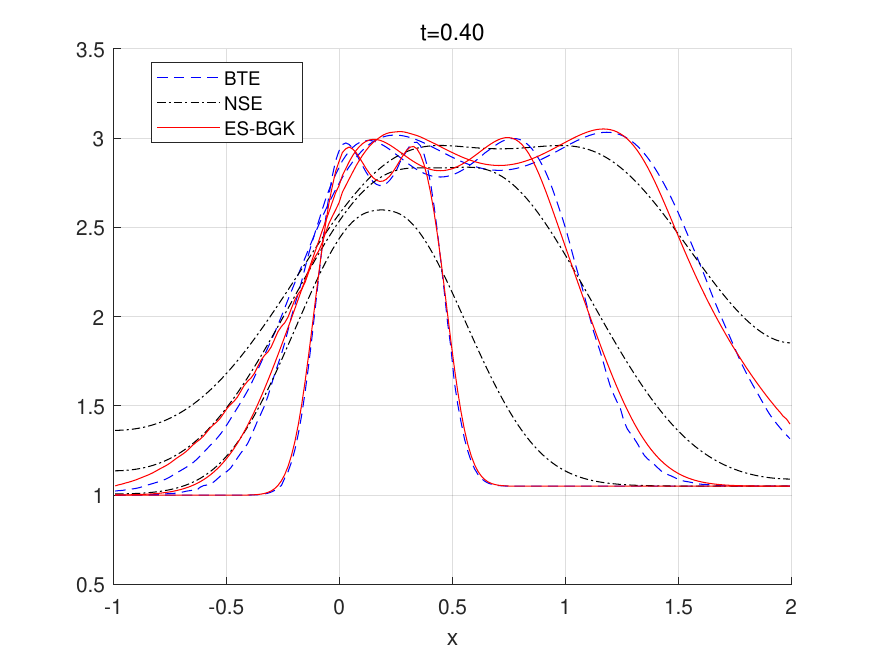}
\includegraphics[width=0.46\textwidth]{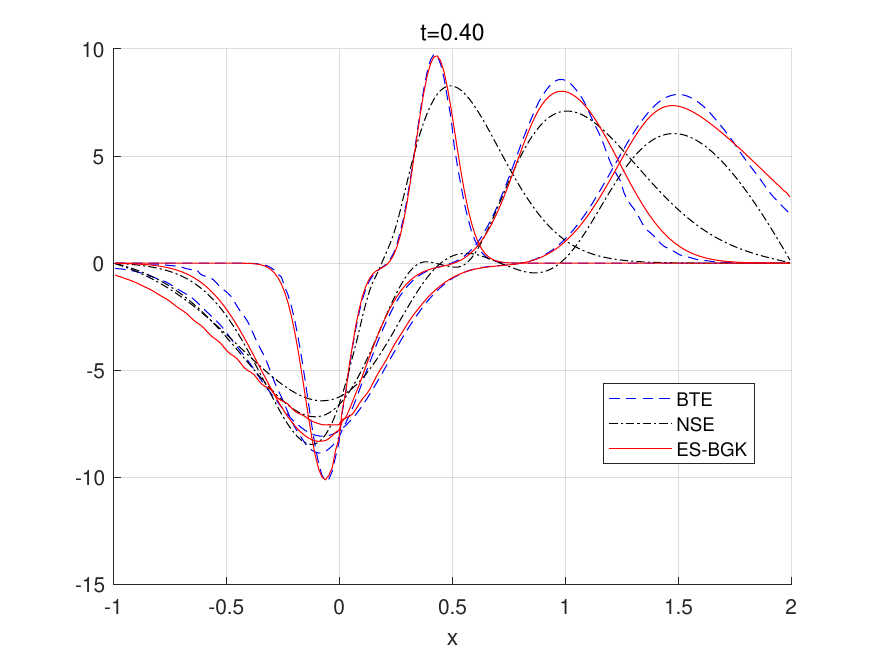}
\caption{Comparison between reference solutions of BTE and NSEs with the numerical solutions for ES-BGK model. We report the case of density, velocity, temperature and heat flux at time $t=0.1,\,0.25,\,0.4$. The {Knudsen} number is $\varepsilon=0.5$.}
\label{fig:riemann 1}
\end{figure}
\begin{figure}[!htbp]
\centering
\includegraphics[width=0.46\textwidth]{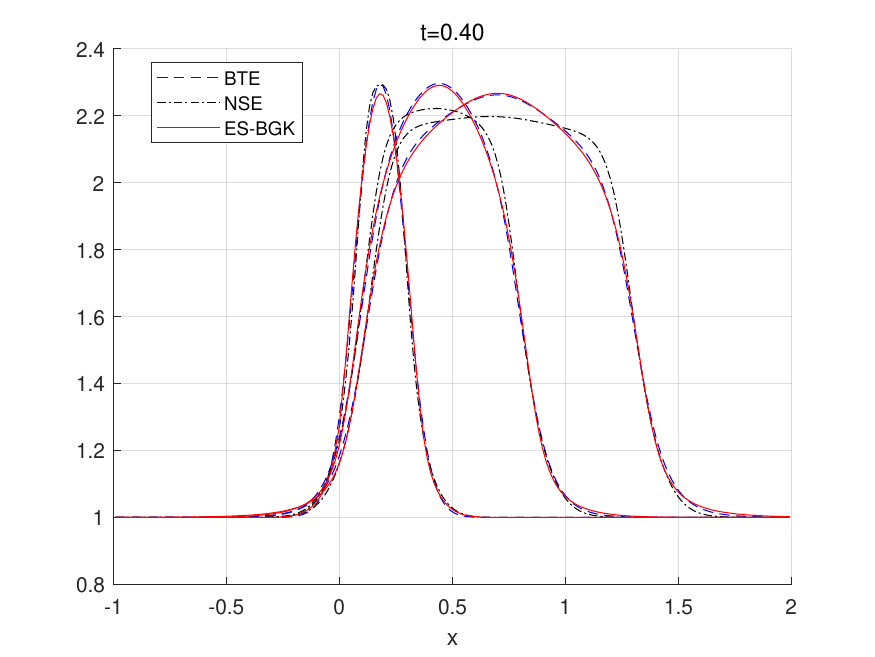}
\includegraphics[width=0.46\textwidth]{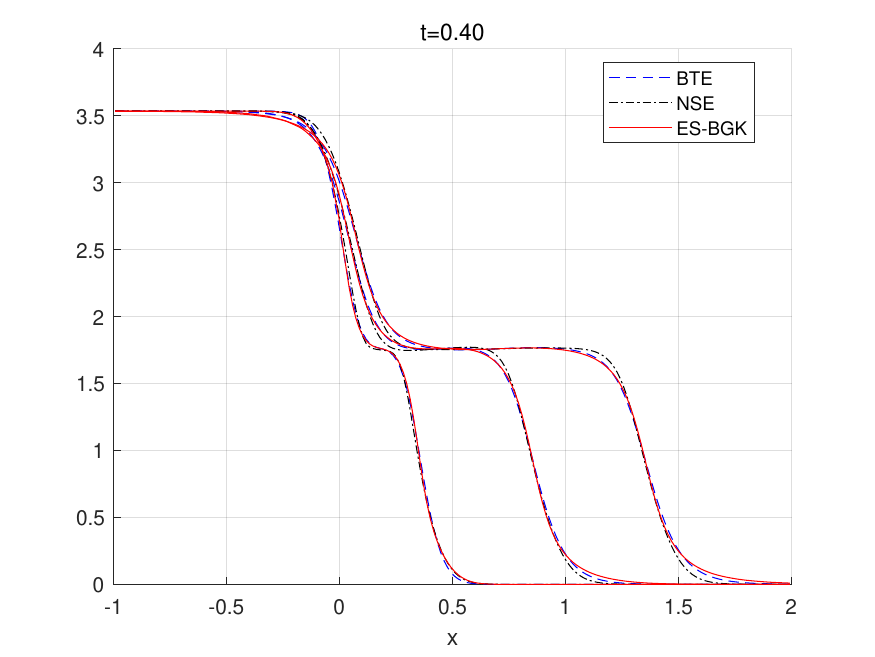}
\includegraphics[width=0.46\textwidth]{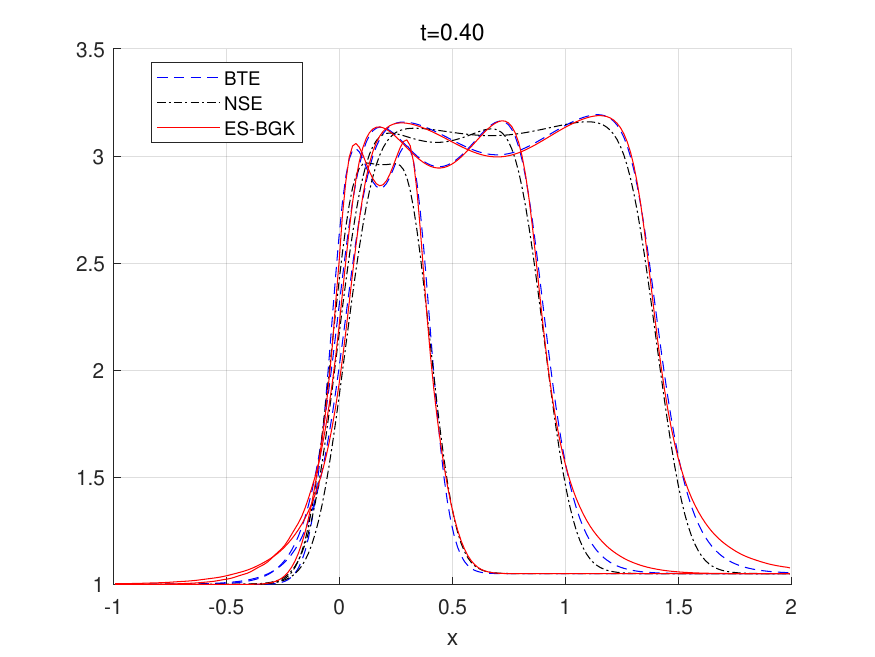}
\includegraphics[width=0.46\textwidth]{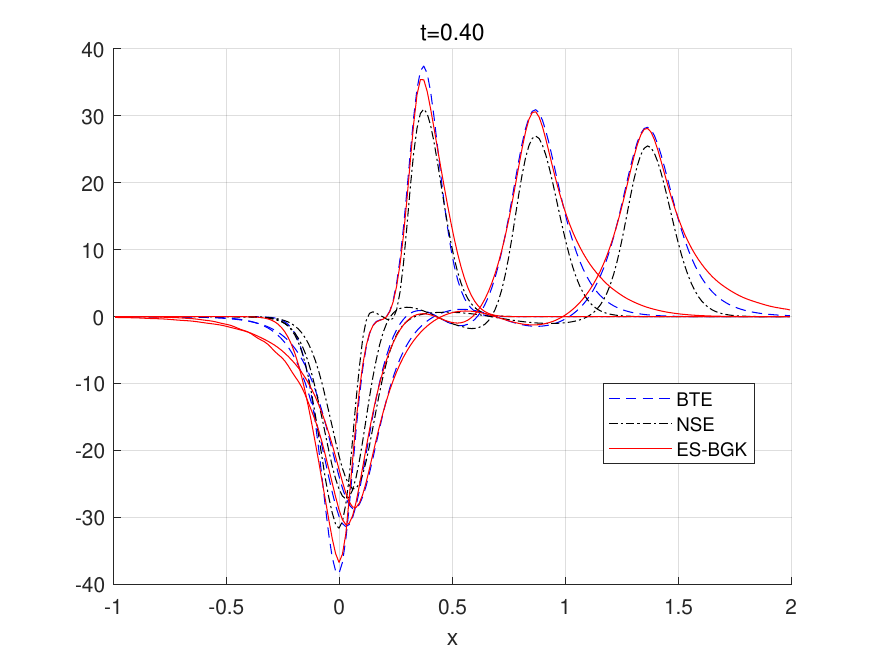}
\caption{Comparison between reference solutions of BTE and NSEs with the numerical solutions for ES-BGK model. We report the case of density, velocity, temperature and heat flux at time $t=0.1,\,0.25,\,0.4$. The Knudsern number is $\varepsilon=0.1$.}
\label{fig:riemann 2}
\end{figure}

{\em Test 4. The Lax Shock Tube Problem.}
Finally we test IMEX-RK schemes solving the ES-BGK model for the Lax shock tube problem, \cite{Hu}.  We set the collision frequency $\tau$ for ES-BGK model as 
$$
\tau = \frac{2}{3}\rho \sqrt{T},
$$
which yields the viscosity and heat conductivity:
$$
\mu = \sqrt{T}, \quad \kappa = \frac{15}{4}\sqrt{T},
$$
where $\nu = -1/2$, and this implies that the Prandtl number is $2/3$. We consider the initial macroscopic variables 
\[
\left(\begin{array}{c}
\rho \\
u \\
p
\end{array}
\right) = \Biggl\{
\begin{array}{cc}
(0.445, 0.698, 3.528)^\top,    & -0.5 \le x \le 0,\\[2mm]
(0.5,0,0.571)^\top, & 0 < x \le 0.5,
\end{array}
\]
on the free-flow condition $x \in [-5,5]$. We take well-prepared initial conditions
$$
f_0(x,v) =\mathcal{M}(x,v) - \frac{\varepsilon}{\tau}\left(I-\Pi_\mathcal{M}\right)(v_1\partial_x \mathcal{M}),
$$
where $I$ is the identity operator and $\Pi_\mathcal{M}$ is the projection operator defined as \eqref{PiMf}.
We first use $80 \times 80$ uniform points for the velocity domain $[-20, 20] \times [-20, 20]$, and $N_x = 200$ uniform points for the spatial discretization. The CFL number is taken as $0.2$ and final time $1.3$. We consider IMEX-II-ISA3 scheme coupled with WENO35 for solving the ES-BGK model.
\begin{figure}[!htbp]
\centering
\includegraphics[width=0.46\textwidth]{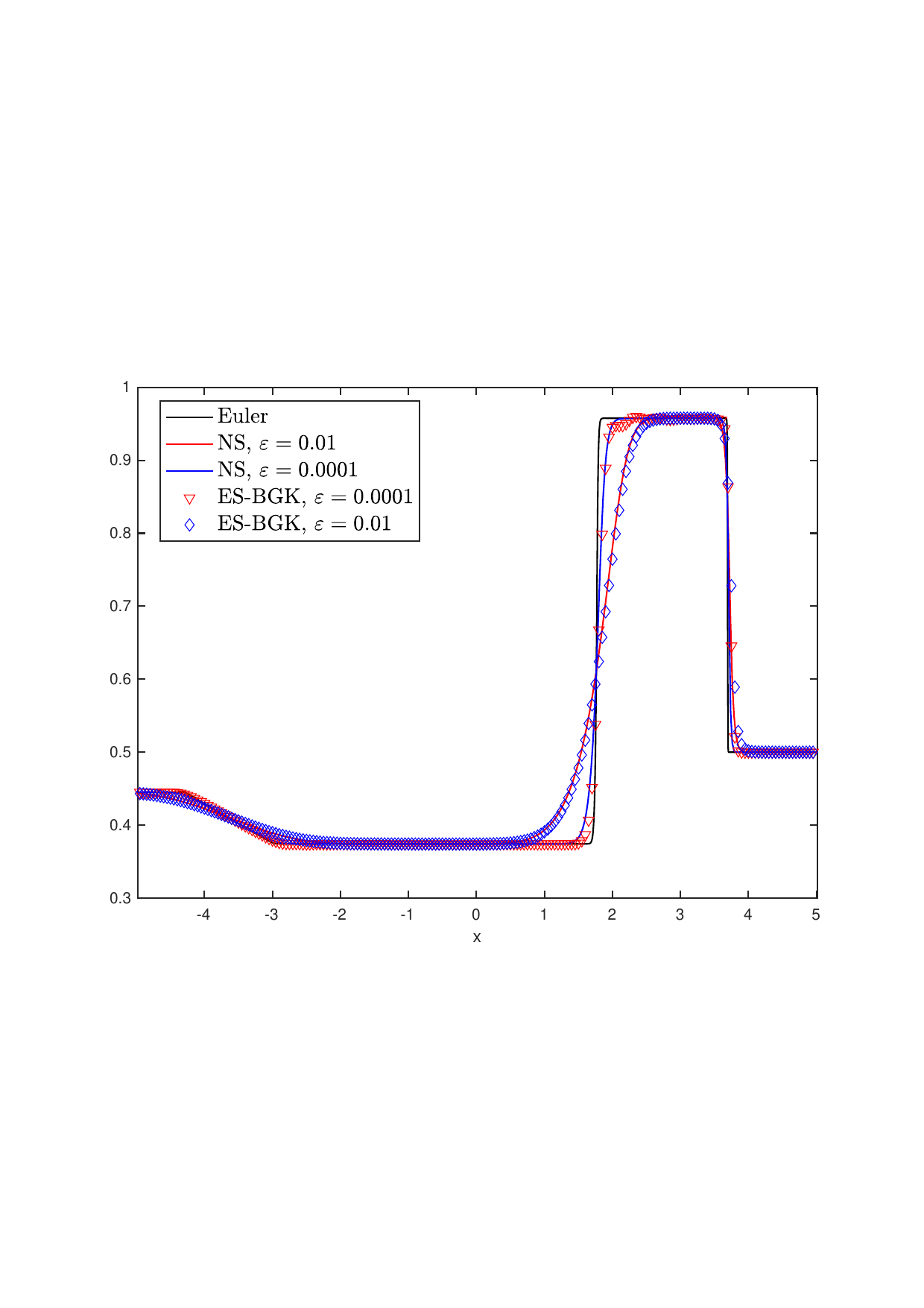}
\includegraphics[width=0.46\textwidth]{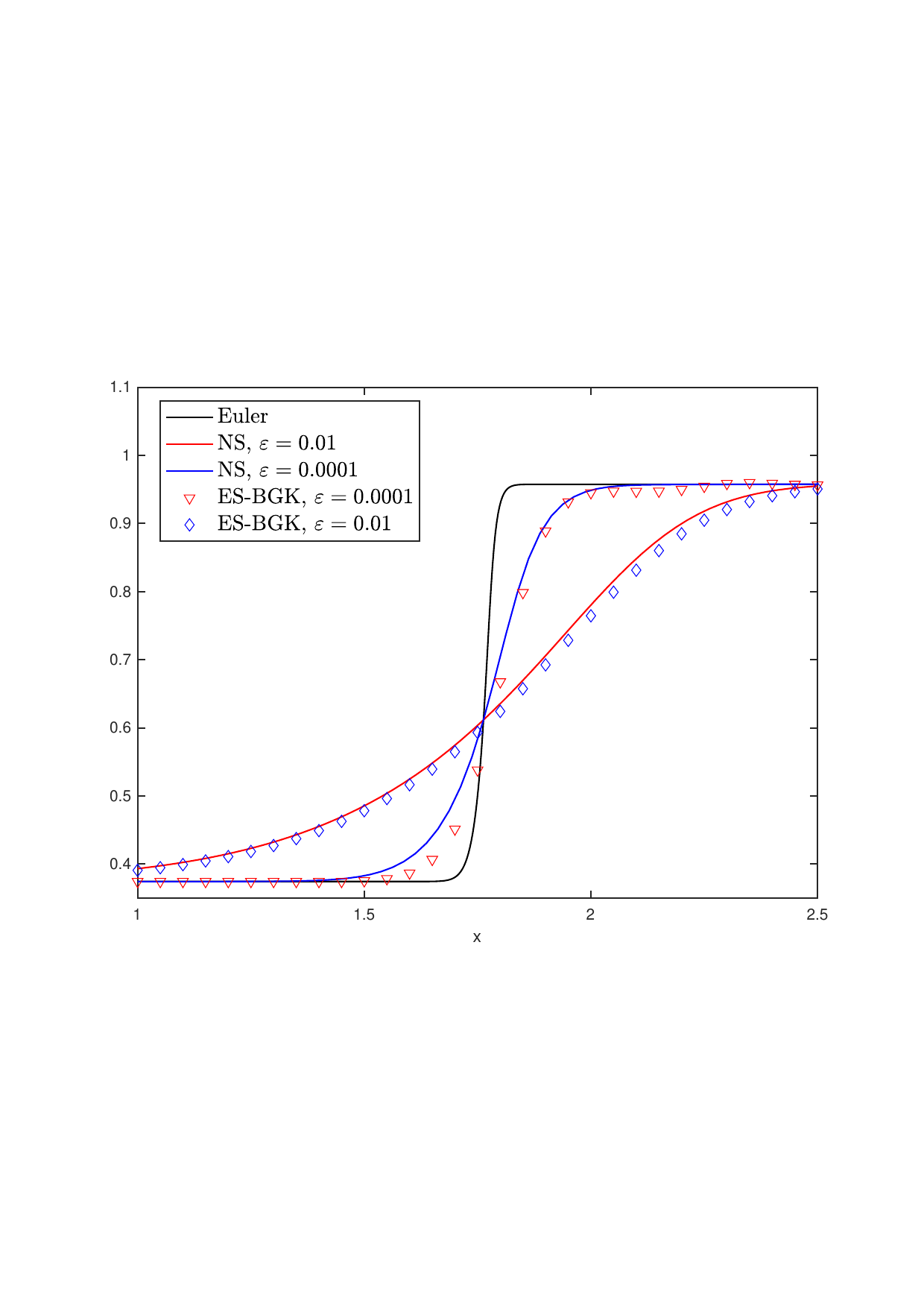}
\caption{Comparison between reference solutions of Navier-Stokes equations and the numerical solutions of IMEX-II-GSA3 (left) and a zoom (right). Uniform $40 \times 40$ points for the velocity domain $[-20, 20] \times [-20, 20]$}
\label{fig:Density}
\end{figure}
In Fig. \ref{fig:Density}, we compare the numerical solution of the ES-BGK model with those of the compressible Navier-Stokes equations and the compressible Euler equations. The reference solution of the Euler equations and of the Navier-Stokes equations was generated by using spectral method for the {spatial} derivatives and fourth-order RK method for the time on a grid of 3200 for $\varepsilon = 0, 10^{-2}, 10^{-4}$ respectively. As we can see from the Fig. \ref{fig:Density} the numerical solution of the ES-BGK model is very close to that of the NS equations. Furthermore for  a better visualization, we zoomed the two solutions on a portion of the domain.
\begin{figure}[htbp]
\centering
\includegraphics[width=0.49\textwidth]{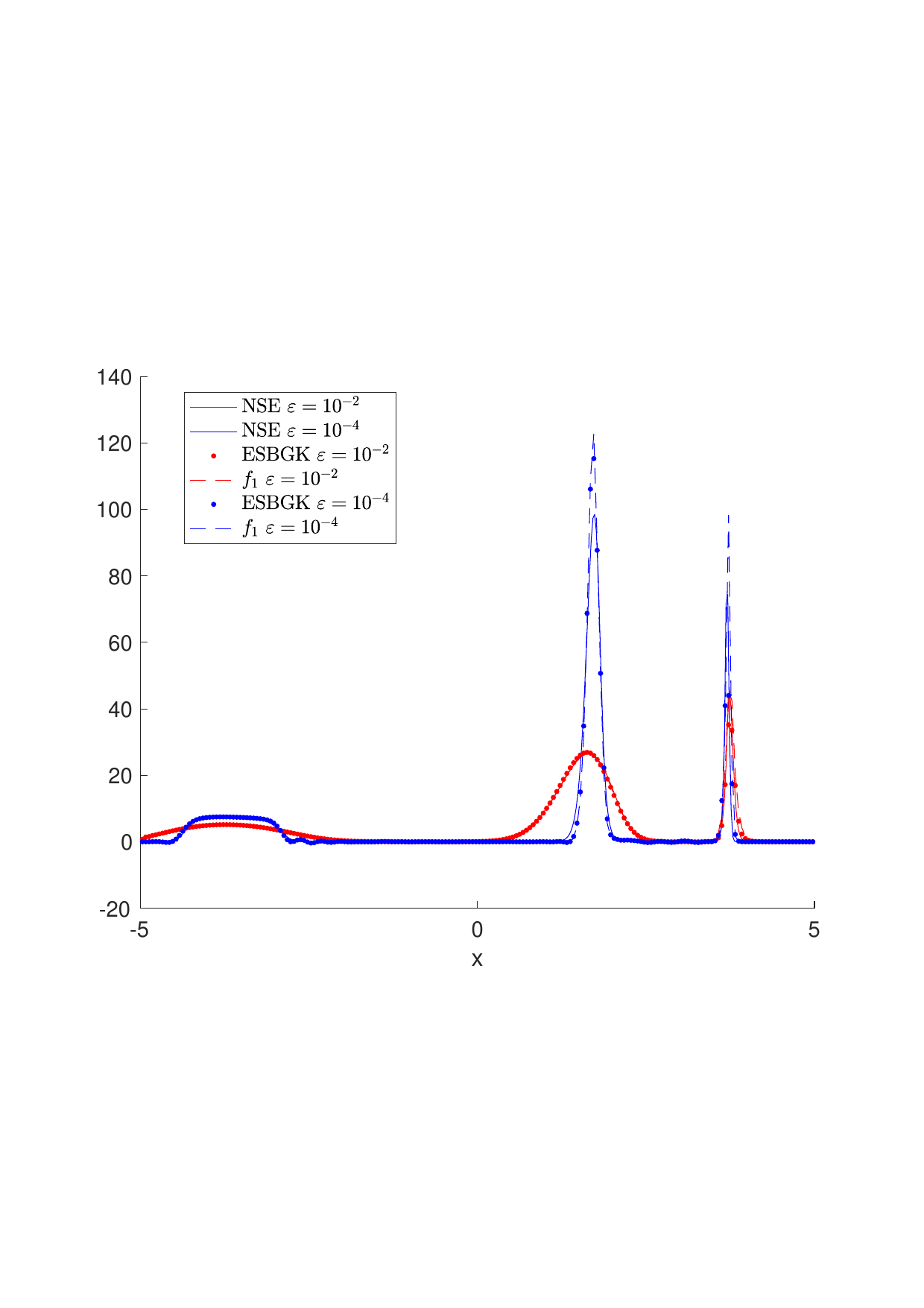}
\includegraphics[width=0.49\textwidth]{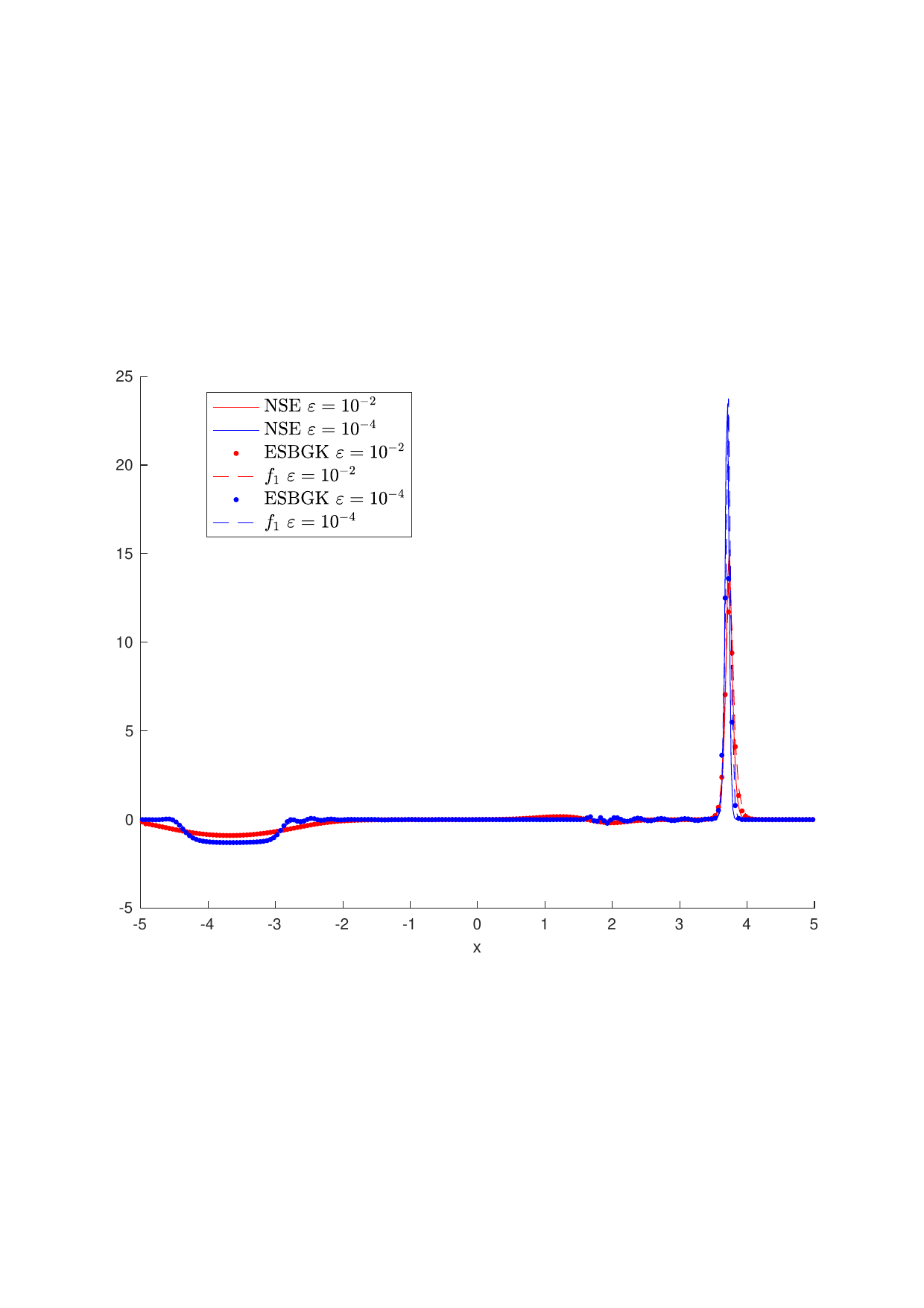}
\caption{The shear stress and heat flux for $\varepsilon = 10^{-2}, 10^{-4}$. Uniform $40 \times 40$ points for the velocity domain $[-20, 20] \times [-20, 20].$}
\label{fig:other}
\end{figure}
In Fig. \ref{fig:other} we reported the moments 
\begin{align}\label{1_A}
\begin{split}
	-\mu \sigma(u) &=  \int_{\R^{d_v}} f_1 (v-u) \otimes  (v-u)dv = \avg{f_1(v -u)\otimes(v-u)},\\
	-\kappa \nabla T &= \int_{\R^{d_v}} f_1 \frac{1}{2}(v-u) |v-u|^2 dv =  \avg{f_1(v -u)|v-u|^2},
\end{split}
\end{align}
i.e., the shear stress and heat flux, computed (1) from the function $f_1 = \frac{\mathcal{G}-f}{\varepsilon}$ (2) from the macroscopic quantities and 3) from the NS equation for the case of $\varepsilon = 10^{-2}$ and $\varepsilon = 10^{-4}$.

\section{Conclusions}
In this work, we investigated 
the asymptotic behavior at the Navier-Stokes level, reproducing theoretical results, Theorems \ref{thm 1} and Theorem \ref{thm 2}. We showed that existing IMEX-RK schemes including type I and II can accurately capture the NS limit without resolving the small scales determined by the Knudsen number. To address the issues on order reduction of these schemes at the Navier-Stokes level, we propose IMEX-RK schemes of type II developed in \cite{Boscarino2017}. In particular, the IMEX-II-ISA3 scheme of type ARS satisfying additional conditions \eqref{22CK} guarantees uniform accuracy avoiding order reduction. Finally, we provided numerical examples that validate the theoretical results.

\section*{Acknowledgements}
Sebastiano Boscarino is supported for this work by 1) the Spoke 1 "FutureHPC $\&$ BigData" of the Italian Research Center on High-Performance Computing, Big Data and Quantum Computing (ICSC) funded by MUR Missione 4 Componente 2 Investimento 1.4: Potenziamento strutture di ricerca e creazione di "campioni nazionali di R$\&$S (M4C2-19 )", (CN00000013);
by 2) the Italian Ministry of Instruction, University and Research (MIUR) to support this research with funds coming from PRIN Project 2022  (2022KA3JBA), entitled "Advanced numerical methods for time dependent parametric partial differential equations and applications"; 3) from Italian Ministerial grant PRIN 2022 PNRR "FIN4GEO: Forward and Inverse Numerical Modeling of hydrothermal systems in volcanic regions with application to geothermal energy exploitation.", (No. P2022BNB97). S. Boscarino is a member of the INdAM Research group GNCS. S. Y. Cho was supported by the National Research Foundation of Korea (NRF) grant funded by the Korea government (MSIT) (No. RS-2022-00166144), and Learning \& Academic research institution for Master's$\cdot$PhD students, and Postdocs (LAMP) Program of the National Research Foundation of Korea (NRF) grant funded by the Ministry of Education (No. RS-2023-00301974).

\appendix
\section{Appendix}\label{App1_bis}
\subsection{Basic properties of Gaussian distribution function}
By definition, $\mathcal{G}[f]$ satisfies
\begin{align}\label{G__U}
\begin{split}
	&\biggl\langle f \biggr\rangle = \int_{\mathbb{R}^{d_v}} f(v) dv = \int_{\mathbb{R}^{d_v}} \mathcal{G}[f](v) dv  = \biggl\langle \mathcal{G}[f] \biggr\rangle  = \rho,\\
	&\biggl\langle v f \biggr\rangle = \int_{\mathbb{R}^{d_v}} vf(v) dv = \int_{\mathbb{R}^{d_v}} v\mathcal{G}[f](v) dv  = \biggl\langle v \mathcal{G}[f](v)\biggr\rangle = \rho u,\\
	&\biggl\langle \frac{|v|^2}{2} f\biggr\rangle = \int_{\mathbb{R}^{d_v}}\frac{|v^2|}{2} f(v) dv = \int_{\mathbb{R}^{d_v}} \frac{|v^2|}{2}\mathcal{G}[f](v) dv=  \biggl\langle \frac{|v|^2}{2} \mathcal{G}[f](v)\biggr\rangle = E.
\end{split}
\end{align}
{Note that from
\begin{align}\label{G__Um}
	\begin{split}
		&\int_{\mathbb{R}^{d_v}} (v-u) \otimes (v-u) f(v) dv   = \rho \Theta,\\
		& \int_{\mathbb{R}^{d_v}} (v-u) \otimes (v-u) \mathcal{G}[f](v) dv  = \rho \mathcal{T},
	\end{split}
\end{align}
we further have
\begin{align}\label{Prop1}
	\begin{split}
		f &= \mathcal{G}[f] \Longleftrightarrow f = \mathcal{M}[f];\\[3mm]
		f &= \mathcal{G}[f] + \mathcal{O}(\varepsilon) \quad \textrm{implies}  \quad  \mathcal{G}[f] = \mathcal{M}[f] + \mathcal{O}(\varepsilon).
	\end{split}
\end{align}
The proof is straightforward. In fact, on both side of $f = \mathcal{G}[f]$ using (\ref{G__Um})  we get $\Theta =  \mathcal{T}$ and from (\ref{TTT}) we get  $\mathcal{T} =  TId$ hence $\mathcal{G}[f]$ is just the isotropic Maxwellian $\mathcal{M}[f]$. Similarly if on both side of $f = \mathcal{M}[f]$ takes (\ref{G__Um}). For the second point in (\ref{Prop1}) the proof is similar and we omit the details.
}

\begin{lem}\label{lem discre}
Let $U_f=(\rho_f,\rho_fu_f,E_f)$ and $U_g=(\rho_g,\rho_fu_g,E_g)$ be macroscopic variables associated to $f$ and $g$, respectively. Then, 
for a positive constant $\delta$ the assumption
\[
\|U_f-U_g\|_\infty= \mathcal{O}(\delta)
\]
implies that
\[
|\mathcal{M}[f]-\mathcal{M}[g]|= \mathcal{O}(\delta).
\]
\end{lem}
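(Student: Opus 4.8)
The plan is to reduce the assertion to the Lipschitz continuity, on any set of admissible states bounded away from vacuum, of the map that sends the conserved macroscopic vector $U=(\rho,\rho u,E)$ to the local Maxwellian $\mathcal{M}[U]$ defined in \eqref{Max}. First I would recall that the primitive variables are smooth functions of $U$ as long as $\rho>0$: one has $u=(\rho u)/\rho$ and, from $\tfrac{d_v}{2}\rho T=E-\tfrac12\rho|u|^2$, the temperature $T=\tfrac{2}{d_v\rho}\bigl(E-\tfrac{|\rho u|^2}{2\rho}\bigr)$. On any compact admissible set where $\rho\ge\rho_{\min}>0$ and $T\ge T_{\min}>0$, with corresponding upper bounds on $\rho,|u|,T$, these expressions are $C^\infty$ in $U$ with derivatives bounded in terms of these constants.

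Second, for each fixed $v$ I would regard $\mathcal{M}[U](v)$ as a $C^\infty$ function of $(\rho,u,T)$, hence of $U$. Differentiating the Gaussian \eqref{Max} with respect to $\rho$, $u$, and $T$ reproduces the same Gaussian multiplied by factors polynomial in $(v-u)/\sqrt{T}$; since $\sup_v |v-u|^k\exp\!\bigl(-|v-u|^2/(2T)\bigr)$ is finite and uniformly controlled on the admissible set, the gradient $\nabla_U\mathcal{M}[U](v)$ is bounded by a constant $C$ that is independent of $v$ and depends only on the bounds above.

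Third, I would join $U_g$ to $U_f$ by the segment $U(s)=sU_f+(1-s)U_g$, $s\in[0,1]$, and apply the fundamental theorem of calculus to write
\[
\mathcal{M}[f](v)-\mathcal{M}[g](v)=\int_0^1 \nabla_U\mathcal{M}[U(s)](v)\cdot\bigl(U_f-U_g\bigr)\,ds ,
\]
whence $|\mathcal{M}[f]-\mathcal{M}[g]|\le C\,\|U_f-U_g\|_\infty=\mathcal{O}(\delta)$, uniformly in $v$.

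The main obstacle is to ensure that the interpolating segment $U(s)$ stays in a fixed admissible set bounded away from vacuum, so that the Lipschitz constant $C$ is genuinely uniform. Since $\rho(s)$ is affine it remains positive between positive endpoints, and the internal energy density $E-\tfrac{|\rho u|^2}{2\rho}$ is a concave function of $U$ on $\{\rho>0\}$, so its value along the segment is bounded below by the minimum of its two endpoint values; dividing by $\tfrac{d_v}{2}\rho(s)$ then yields a uniform positive lower bound for $T(s)$. This keeps $(\rho(s),u(s),T(s))$ in a compact admissible region and makes the Lipschitz estimate uniform, which completes the argument.
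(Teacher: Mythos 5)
Your proof is correct and follows essentially the same route as the paper's: the paper writes the differential $d\mathcal{M}$ in the primitive variables $(\rho,u,T)$ — the same Gaussian-times-polynomial factors you obtain — and expands to first order with an $\mathcal{O}(\delta^2)$ remainder, which is exactly your gradient-plus-mean-value argument in integrated form. The only difference is one of rigor, not of method: you work in the conserved variables, justify the $v$-uniformity of the gradient bound via Gaussian decay, and use concavity of the internal energy to keep the interpolating segment admissible, all details the paper's one-line proof leaves implicit.
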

\begin{proof}
The differential form of $\mathcal{M}$ satisfies
\[
d\mathcal{M}= \mathcal{M}\left[ \frac{1}{\rho}d\rho + \frac{(v-u)}{T}\cdot du + \left(\frac{|v-u|^2}{2T^2}- \frac{d_v}{2T}\right)dT\right].
\]
This further implies that
\[
\mathcal{M}[f]-\mathcal{M}[g]= \mathcal{M}[g]\left[ \frac{1}{\rho_g}(\rho_f-\rho_g) + \frac{(v-u_g)}{T_g}\cdot (u_f-u_g) + \left(\frac{|v-u_g|^2}{2T_g^2}- \frac{d_v}{2T_g}\right)(T_f-T_g)\right] + \mathcal{O}(\delta^2).
\]
This gives the desired estimate.
\end{proof}

\subsection{{Chapman-Enskog} expansion of ES-BGK model}
Here we perform the first order Chapman-Enskog expansion of ES-BGK model with respect to $\varepsilon$:
\begin{equation}\label{firstEx}
f = \mathcal{M}(f) + \varepsilon f_1,
\end{equation}
which implies that $f_1$ satisfies the so-called compatibility relations:
\begin{equation}\label{Comp}
\langle \phi  f_1  \rangle=0, \quad \phi = (1,v,|v|^2).
\end{equation}
Now we take the expansion in terms of $\varepsilon$ for the stress tensor 
\begin{equation}\label{ST}
\Theta = T Id + \varepsilon \Theta_1,
\end{equation} 
and the heat flux
\begin{equation}\label{HF}
\mathbb{Q} := \left\langle \frac{|v-u|^2}{2}(v-u)f \right\rangle = 0 + \varepsilon \mathbb{Q}_1
\end{equation}
with 
\begin{align}\label{1_A}
\Theta_1 =  \frac{1}{\rho }\int_{\R^{d_v}} f_1 (v-u) \otimes  (v-u)dv = \frac{1}{\rho } \avg{f_1(v -u)\otimes(v-u)},
\end{align}
and 
\begin{equation}\label{3_A}
\mathbb{Q}_1 = \int_{\R^{d_v}} f_1 \frac{1}{2}(v-u) |v-u|^2 dv =  \avg{f_1\frac{1}{2}(v -u)|v-u|^2}.
\end{equation}
Inserting (\ref{firstEx}) and these latter expansions for  $\Theta$ and $\mathbb{Q}$ into the conservation laws 
$$
\langle \phi f \rangle +  \langle \phi v \cdot \nabla_x f \rangle = 0, 
$$
or 
$$
\langle \phi \mathcal{M}(f) \rangle +  \langle \phi v \cdot \nabla_x \mathcal{M}(f) \rangle = - \varepsilon  \nabla_x \cdot \langle \phi v  f_1 \rangle, 
$$
it gives the equations:
\begin{equation}\label{DiscCL_bis}
\partial_t U +  \nabla_x F(U) = -\varepsilon  \nabla_x \cdot H(\bU) 
\end{equation}
with
\begin{equation}\label{UFH}
U =  \langle \phi f \rangle  = \left(
\begin{array}{c}
	\rho\\
	\rho u\\
	E
\end{array}
\right),
\quad
F(U) = \left(
\begin{array}{c}
	\rho u\\
	\rho u \otimes u + \rho T Id \\
	(E + \rho T)u
\end{array}
\right),
\quad
H(\bU) = \left(
\begin{array}{c}
	0\\
	\rho \Theta_1\\
	\mathbb{Q}_1 + \rho \Theta_1 u
\end{array}
\right).
\end{equation}
{To evaluate the quantities $\rho \Theta_1$ and $\mathbb{Q}_1$, we seek the form of $f_1$. For this aim,}
we consider the expansion of the  anisotropic Gaussian $\mathcal{G}(f)$ with respect to $\varepsilon$, i.e. 
\begin{equation}\label{G_ex}
\mathcal{G}(f) = \mathcal{M}(f) + \varepsilon g.
\end{equation}
By (\ref{TTT}) and considering the expansion (\ref{ST}), we get $\mathcal{T} = T Id + \nu \varepsilon \Theta_1$, and using the fact that $tr(\Theta_1) = 0$,  this gives $det(\mathcal{T}) = T^{d_v} + \mathcal{O}(\varepsilon^2)$ and 
$$
[\mathcal{T}]^{-1} = \frac{1}{T}\left(Id - \frac{\nu \varepsilon}{T}\Theta_1\right) + \mathcal{O}(\varepsilon^2),
$$
and therefore by (\ref{Gauss}) we obtain
\begin{equation}\label{gVect}
g := \frac{\mathcal{G}(f) - \mathcal{M}(f)}{\varepsilon} = \nu \frac{ \mathcal{M}(f)}{2 T^2}(v-u)^\top\Theta_1(v-u).
\end{equation}
where we neglect the term $\mathcal{O}(\varepsilon^2)$.
Now the next step is to insert expansions (\ref{firstEx}) and (\ref{G_ex}) into the equation 
$$
\partial_t f + v \cdot \nabla_x f = \frac{\tau}{\varepsilon}(\mathcal{G}[f]-f).
$$
Then, we get
\begin{equation}\label{1}
\tau (f_1 - g) = - \left( 
\partial_t \mathcal{M} + v\cdot \nabla_x \mathcal{M} +\varepsilon\left(\partial_t {f}_1 + v\cdot \nabla_x 
{f}_1 \right)\right).
\end{equation}

Before proceeding, we review a Hilbert space and an associated operator considered in \cite{Mieussens} and some Lemmas. 
\begin{rem}\label{B1}
Given a Maxwellian function $\mathcal{M}(f)$,  $\Pi_{\mathcal{M}}(f)$ is the orthogonal projection 
in the Hilbert space $L_\mathcal{M}^2 =\{ \phi\textrm{ such that } \,\phi\mathcal{M}^{-1/2} \in L_2(\mathbb{R}^{d_v})\}$ onto $$\mathcal{N} = Span \{\mathcal{M},\,v\mathcal{M},\,|v|^2\mathcal{M}\},$$ equipped with the following weighted inner product $$
\biggl\langle \phi\psi \biggr\rangle_\mathcal{M} = \biggl\langle \phi\psi \mathcal{M}^{-1}\biggr\rangle = \int_{\mathbb{R}^{d_v}}  \phi\psi \mathcal{M}^{-1}dv.
$$
Then, by using the orthogonal basis of $\mathcal{N}$ 
\begin{equation}\label{Bases}
	\mathcal{B} = \left\{ \frac{1}{\rho}\mathcal{M},\, \frac{(v-u)}{\sqrt{T}}\frac{1}{\rho}\mathcal{M},\, \left( \frac{|v-u|^2}{2T}- \frac{d_v}{2}\right)\frac{1}{\rho}\mathcal{M}\right\},
\end{equation}
its explicit form is written as 
\begin{align}\label{PiMf}
	\Pi_\mathcal{M}(f) = \frac{1}{\rho}\biggl[ \langle f \rangle + \frac{(v-u) \cdot \langle (v-u)f \rangle }{T} + \left( \frac{|v-u^2|}{2T}- \frac{d_v}{2}\right)\frac{2}{d_v} \biggl\langle \left( \frac{|v-u^2|}{2T}- \frac{d}{2}\right)f \biggr\rangle \biggr]\mathcal{M}.
\end{align}
Note that a direct calculation gives $\Pi_{\mathcal{M}}(\partial_t \mathcal{M})=0$ and $\Pi_{\mathcal{M}}(f_1-g)=f_1-g$. Then, from \eqref{1} we have
\begin{align}\label{2}
	\begin{split}
		\tau(f_1-g)&=(I-\Pi_{\mathcal{M}})(\tau(f_1-g))\cr
		&=-(I-\Pi_{\mathcal{M}})(\partial_t \mathcal{M}+ v \cdot \nabla_x \mathcal{M})+\mathcal{O}(\varepsilon)\cr
		&=-(I-\Pi_{\mathcal{M}})(v \cdot \nabla_x \mathcal{M})+\mathcal{O}(\varepsilon).
	\end{split}
\end{align}
Moreover, by \cite{Golse,JinFilbet} it follows that
\begin{align}\label{5_A_bis}
	(I-\Pi_{\mathcal{M}})(v \cdot \nabla_x \mathcal{M})=  \mathcal{M}\left(A(V):\frac{\sigma(u)}{2} + 2B(V)\cdot \nabla_x \sqrt{T}\right) +\mathcal{O}(\varepsilon),
\end{align}
where
\begin{align}\label{VAB def}
	V = \frac{v-u}{\sqrt{T}}, \quad A(V) = V \otimes V - \frac{1}{d_v}|V|^2 Id, \quad B(V) = \frac{1}{2}V(|V|^2 - (d_v+2)),
\end{align}
and 
$$
\sigma(u) =  \nabla_x u + (\nabla_x u)^\top - \frac{2}{d_v}\nabla_x \cdot u Id.
$$
To sum up, the relations \eqref{1}, \eqref{5_A_bis} and \eqref{2} imply that 
\begin{equation}\label{M_lim}
	\partial_t \mathcal{M} + v\cdot \nabla_x \mathcal{M} = 
	\mathcal{M}\left(A(V):\frac{\sigma(u)}{2} + 2B(V)\cdot \nabla_x \sqrt{T}\right) +\mathcal{O}(\varepsilon).
\end{equation}
\end{rem}

\begin{lem}\label{Lemma1}
For $\phi = (1, v, |v|^2/2)^\top$, it follows 
\begin{equation}\label{Hu}
	\avg{v\phi f_1} = H(U), \quad \textrm{where} \quad H(U) = (0, \rho \Theta_1, \mathbb{Q}_1 + \rho \Theta_1 u)^\top,
\end{equation}
with $U = (\rho, \rho u, E)^\top$.
\end{lem}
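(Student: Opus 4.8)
The plan is to establish the identity componentwise, splitting $\phi=(1,v,|v|^2/2)^\top$ into its three blocks and reducing every moment of $f_1$ to a \emph{centered} moment (about the mean velocity $u$), so that the definitions \eqref{1_A}--\eqref{3_A} of $\Theta_1$ and $\mathbb{Q}_1$ can be read off directly. The only analytic input needed is the compatibility relation \eqref{Comp}, $\avg{\phi f_1}=0$, which I would first unpack into the three scalar statements $\avg{f_1}=0$, $\avg{v f_1}=0$ and $\avg{|v|^2 f_1}=0$. Since $u$ is independent of $v$, these immediately yield the two centered identities $\avg{(v-u)f_1}=0$ and $\avg{|v-u|^2 f_1}=0$ (using $|v-u|^2=|v|^2-2u\cdot v+|u|^2$), which are the workhorses for the rest of the computation.

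The mass-flux block is the entry $\avg{v f_1}$, which vanishes directly by compatibility and matches the zero first component of $H(U)$. For the momentum-flux block $\avg{v\otimes v\, f_1}$ I would expand $v\otimes v=(v-u)\otimes(v-u)+(v-u)\otimes u+u\otimes(v-u)+u\otimes u$; pulling the constant $u$ out of the integral, the last three terms are killed by $\avg{(v-u)f_1}=0$ and $\avg{f_1}=0$, leaving $\avg{v\otimes v\, f_1}=\avg{(v-u)\otimes(v-u)f_1}=\rho\Theta_1$ by the definition of $\Theta_1$.

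The energy-flux block $\avg{v\,|v|^2/2\, f_1}$ is the delicate step and the main obstacle. Here I would set $w=v-u$ and expand $v\,|v|^2/2=\tfrac12(w+u)\bigl(|w|^2+2\,w\cdot u+|u|^2\bigr)$ into six terms. The bookkeeping is the crux: integrated against $f_1$, every term carrying a single isolated factor $w$, a bare $|w|^2$, or a lone constant is annihilated by the centered identities above, so only two survive. The first, $\tfrac12\avg{w|w|^2 f_1}$, is exactly $\mathbb{Q}_1$ by \eqref{3_A}; the second requires recognizing $w\,(w\cdot u)=(w\otimes w)u$, so that $\avg{w(w\cdot u)f_1}=\avg{w\otimes w\, f_1}\,u=\rho\Theta_1 u$. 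Summing gives $\avg{v\,|v|^2/2\, f_1}=\mathbb{Q}_1+\rho\Theta_1 u$, the third component of $H(U)$, and assembling the three blocks completes the proof.
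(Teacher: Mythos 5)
Your proof is correct. The paper states Lemma \ref{Lemma1} without giving any proof (it is simply invoked, together with Lemmas \ref{Lemma2}--\ref{Lemma3}, in the Chapman--Enskog computation of Appendix \ref{App1_bis}), and your argument --- unpacking the compatibility relations $\avg{\phi f_1}=0$ into the centered identities $\avg{(v-u)f_1}=0$, $\avg{|v-u|^2 f_1}=0$, then reducing each block of $\avg{v\phi f_1}$ to centered moments and reading off $\rho\Theta_1$ and $\mathbb{Q}_1$ from \eqref{1_A} and \eqref{3_A}, with the key observation $\avg{(v-u)\,((v-u)\cdot u)\,f_1}=\rho\Theta_1 u$ --- is exactly the standard moment computation the paper implicitly relies on.
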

\begin{lem}\label{Lemma2}
For $\phi = (1, v, |v|^2/2)^\top$, it follows 
\begin{equation}\label{Gu}
	\avg{v\phi g} = {G}(U), \quad \textrm{where} \quad {G}(U)  = (0, \nu \rho\Theta_1,\nu \rho \Theta_1 u)^\top,
\end{equation}
with $U = (\rho, \rho u, E)^\top$.
\end{lem}
\begin{lem}\label{Lemma3}
For $\phi = (1, v, |v|^2/2)^\top$, it follows 
\begin{equation}\label{Fu}
	\avg{v \phi (I-\Pi_{\mathcal{M}})(v \cdot \nabla \mathcal{M})} =\avg{v\phi \mathcal{M}\left(A(V):\frac{\sigma(u)}{2} + 2B(V)\cdot \nabla_x \sqrt{T}\right)} = \mathcal{F}(U)  
\end{equation}
where 
$$
\mathcal{F}(U) = \left(0, \rho T\sigma(u), \rho T\sigma(u)u + \frac{d_v + 2}{2}\rho T\nabla T\right)^\top,
$$
with $U = (\rho, \rho u, E)^\top$.
\end{lem}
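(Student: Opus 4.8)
The plan is to reduce the entire statement to classical Gaussian moment computations. The first equality in \eqref{Fu} is nothing but the expansion \eqref{5_A_bis} of Remark \ref{B1} integrated against $v\phi$ (the $\mathcal{O}(\varepsilon)$ remainder being dropped at this order), so the substance of the lemma is the evaluation of $\avg{v\phi\,\mathcal{M}\left(A(V):\frac{\sigma(u)}{2} + 2B(V)\cdot \nabla_x\sqrt{T}\right)}$. First I would perform the change of variables $v = u + \sqrt{T}\,V$, under which $\mathcal{M}(v)\,dv = \rho\,\omega(V)\,dV$ with $\omega(V) = (2\pi)^{-d_v/2}e^{-|V|^2/2}$ the standard Gaussian density. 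Every moment then becomes $\rho$ times an expectation $\mathbb{E}[\,\cdot\,]$ of a polynomial in the standard normal vector $V$, and I can invoke the isotropic Gaussian moment formulas $\mathbb{E}[V_iV_j]=\delta_{ij}$, $\mathbb{E}[V_iV_jV_kV_\ell]=\delta_{ij}\delta_{k\ell}+\delta_{ik}\delta_{j\ell}+\delta_{i\ell}\delta_{jk}$, $\mathbb{E}[V_iV_j|V|^2]=(d_v+2)\delta_{ij}$, and $\mathbb{E}[V_iV_j|V|^4]=(d_v+2)(d_v+4)\delta_{ij}$, together with the vanishing of all odd moments. Throughout I write the scalar factor of the integrand as $\frac12\sum_{ij}A_{ij}(V)\,\sigma_{ij}(u)+2\sum_k B_k(V)\,\partial_{x_k}\sqrt{T}$, noting that $A(V)$ is even and $B(V)$ odd in $V$.

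I would then treat the three blocks of $\phi=(1,v,|v|^2/2)^\top$ separately. For the mass block ($\phi_1=1$), the moment $\avg{v_m\,\mathcal{M}(\cdots)}$ vanishes by parity and symmetry: the $u_m$-weighted parts drop because $\mathbb{E}[A_{ij}]=\mathbb{E}[B_k]=0$, while the $\sqrt{T}V_m$-weighted parts drop because $\mathbb{E}[V_mA_{ij}]=0$ (an odd moment) and $\mathbb{E}[V_mB_k]=\tfrac12\big((d_v+2)\delta_{mk}-(d_v+2)\delta_{mk}\big)=0$. For the momentum block ($\phi_2=v$), only the $TV_mV_n$ part of $v_mv_n$ survives against the even $A$-term, the $B$-term contributing nothing by parity; here I would use $\mathbb{E}[V_mV_nA_{ij}]=\delta_{mi}\delta_{nj}+\delta_{mj}\delta_{ni}-\tfrac{2}{d_v}\delta_{mn}\delta_{ij}$ and contract with $\sigma_{ij}(u)$, the trace term dropping because $\sigma(u)$ is traceless, to obtain exactly $\rho T\,\sigma_{mn}(u)$, i.e.\ the second component $\rho T\sigma(u)$ of $\mathcal{F}(U)$.

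The energy block ($\phi_3=|v|^2/2$) is where the real work lies, and I expect it to be the main obstacle. After substituting $v=u+\sqrt{T}V$ I would expand $v_m\frac{|v|^2}{2}$ into pieces graded by the number of $V$-factors and retain only those whose parity matches each term of the integrand. The viscous ($A$-)contribution comes solely from the degree-two piece $T\big(\tfrac12 u_m|V|^2+V_m(u\cdot V)\big)$; using $\mathbb{E}[|V|^2A_{ij}]=0$ the $u_m|V|^2$ summand drops, and the degree-four contraction above yields $\rho T\,(\sigma(u)u)_m$. The heat ($B$-)contribution needs the degree-six moment $\mathbb{E}[V_mV_k|V|^4]=(d_v+2)(d_v+4)\delta_{mk}$, and the key cancellation is that the entire $\sqrt{T}$-weighted degree-one block multiplying $B_k$ integrates to zero algebraically, so that all $u$-dependent heat terms disappear; only the degree-three piece $T^{3/2}\tfrac12 V_m|V|^2$ survives, leaving $\rho T^{3/2}(d_v+2)\,\partial_{x_m}\sqrt{T}$, which equals $\frac{d_v+2}{2}\rho T\,(\nabla_x T)_m$ after writing $\partial_{x_m}\sqrt{T}=\tfrac{1}{2\sqrt{T}}\partial_{x_m}T$. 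Summing the two contributions gives the third component $\rho T\sigma(u)u+\frac{d_v+2}{2}\rho T\nabla_x T$ of $\mathcal{F}(U)$, completing the identification. The only delicate bookkeeping is the parity/degree grading together with the two algebraic cancellations (the traceless property of $\sigma(u)$ in the momentum block and the vanishing of the degree-one heat block), which is precisely why I would carry out the energy block last and most carefully.
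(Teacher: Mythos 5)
Your proof is correct, and there is nothing in the paper to diverge from: the paper states Lemma \ref{Lemma3} without proof, implicitly deferring to the classical references cited around \eqref{5_A_bis} (\cite{Golse,JinFilbet}), and your Gaussian-moment computation is exactly the standard argument those references use. I checked the key moments: $\mathbb{E}[V_mB_k]=\frac12\bigl((d_v+2)-(d_v+2)\bigr)\delta_{mk}=0$ kills the mass block and all $u$-dependent heat terms; $\mathbb{E}[V_mV_nA_{ij}]\,\sigma_{ij}/2=\sigma_{mn}$ (using symmetry and tracelessness of $\sigma(u)$) gives the momentum block $\rho T\sigma(u)$ and, contracted with $u_n$, the viscous part $\rho T\sigma(u)u$ of the energy block; $\mathbb{E}[|V|^2A_{ij}]=\bigl((d_v+2)-\frac{1}{d_v}d_v(d_v+2)\bigr)\delta_{ij}=0$ removes the $u_m|V|^2$ piece; and $\mathbb{E}[V_m|V|^2B_k]=\frac12\bigl((d_v+2)(d_v+4)-(d_v+2)^2\bigr)\delta_{mk}=(d_v+2)\delta_{mk}$ yields $\rho T^{3/2}(d_v+2)\partial_{x_m}\sqrt{T}=\frac{d_v+2}{2}\rho T\,\partial_{x_m}T$, matching $\mathcal{F}(U)$ exactly. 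One small refinement: you justify the first equality in \eqref{Fu} by "dropping the $\mathcal{O}(\varepsilon)$ remainder" of \eqref{5_A_bis}, but the identity $(I-\Pi_{\mathcal{M}})(v\cdot\nabla_x\mathcal{M})=\mathcal{M}\bigl(A(V):\frac{\sigma(u)}{2}+2B(V)\cdot\nabla_x\sqrt{T}\bigr)$ is a purely algebraic projection computation and holds exactly (the $\mathcal{O}(\varepsilon)$ in \eqref{5_A_bis} enters only when time derivatives are eliminated via the moment equations, which is not needed here), so the exact equality asserted in the lemma is fully justified and your argument is, if anything, stated slightly more weakly than necessary.
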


By Remark \ref{B1}, we can rewrite (\ref{1}) as
\begin{equation}\label{f11}
f_1=  g-\frac{1}{\tau}\left(\mathcal{M}\left(A(V):\frac{\sigma(u)}{2} + 2B(V)\cdot \nabla_x \sqrt{T}\right)\right)  + \mathcal{O}(\varepsilon).
\end{equation}

{Next, we multiply both sides of (\ref{f11}) by $v \phi$} and take integration and use Lemmas \ref{Lemma1}-\ref{Lemma3} to obtain
\begin{align}\label{f11_bis}
\begin{split}
	\avg{v\phi f_1}&=  \avg{v\phi g}- \frac{1}{\tau}\avg{v\phi \mathcal{M}\left(A(V):\sigma(u) + 2B(V)\cdot \nabla_x \sqrt{T}\right)}  + \mathcal{O}(\varepsilon),\\
	&H(U) = {G}(U) - \frac{1}{\tau}\mathcal{F}(U) + \mathcal{O}(\varepsilon),
\end{split}
\end{align}
that is,
\begin{equation}\label{f11_tris}
\left(
\begin{array}{c}
	0\\
	\rho \Theta_1\\
	\mathbb{Q}_1 + \rho \Theta_1 u
\end{array}
\right)=
\left(
\begin{array}{c}
	0\\
	\nu \rho \Theta_1\\
	\nu \rho \Theta_1 u
\end{array}
\right) - \frac{1}{\tau}
\left(
\begin{array}{c}
	0\\
	\rho T\sigma(u)\\
	\rho T\sigma(u)u + \frac{d_v +2}{2}\rho T \nabla_x T
\end{array}
\right) + \mathcal{O}(\varepsilon).
\end{equation}
Then, we get
$$
\rho \Theta_1 =- \mu\sigma(u) + \mathcal{O}(\varepsilon), 
$$
with viscosity $\mu = \frac{\rho T}{(1-\nu)\tau}$
and 
$$
\mathbb{Q}_1 =- \kappa \nabla_x T + \mathcal{O}(\varepsilon),
$$
with the thermal conductivity
$$
\kappa = \frac{d_v + 2}{2}\frac{p}{\tau},
$$
where $p = \rho T$. Finally, from (\ref{DiscCL_bis}) we derive the CNS equations
\begin{equation}\label{Mom2_bis}
\partial_t U + \nabla_x \cdot F(U) = \varepsilon \nabla_x \cdot S(U),
\end{equation}
with
\begin{equation}
S(U)=\left(
\begin{array}{c}
	0\\
	\mu \sigma(u)\\
	\mu\sigma(u)u -\kappa \nabla_x T
\end{array}
\right),
\end{equation}
$$
\sigma(u) =  \nabla_x u + (\nabla_x u)^   \top - \frac{2}{d}\nabla_x \cdot u I.
$$

\bibliographystyle{plain}
\bibliography{references}

\end{document}